\DeclareMathOperator{\rep}{rep}
\DeclareMathOperator{\val}{val}
\newtheorem{theorem}{Theorem}
\newtheorem{corollary}[theorem]{Corollary}
\newtheorem{proposition}[theorem]{Proposition}
\newtheorem{lemma}[theorem]{Lemma}
\theoremstyle{definition}
\newtheorem{definition}[theorem]{Definition}
\newtheorem{example}[theorem]{Example}
\newtheorem{remark}[theorem]{Remark}
\title{On digital sequences associated with Pascal's triangle}
\author{P. Mathonet}
\author{M. Rigo}
\author{M. Stipulanti}
\author{N. Zena\"idi}
\begin{document}

\begin{abstract}
We consider the sequence of integers whose $n$th term has base-$p$ expansion given by the $n$th row of Pascal's triangle modulo $p$ (where $p$ is a prime number). We first present and generalize  well-known relations concerning this sequence. Then, with the great help of Sloane's On-Line Encyclopedia of Integer Sequences, we show that it appears naturally as a subsequence of a $2$-regular sequence. Its study provides interesting relations and surprisingly involves odious and evil numbers, Nim-sum and even Gray codes.
Furthermore, we examine similar sequences emerging from prime numbers involving alternating sum-of-digits modulo~$p$.
This note ends with a discussion about Pascal's pyramid involving trinomial coefficients. 
\end{abstract}
\maketitle
\section{Introduction}
The problem of determining the number of sides of constructible regular polygons (with straightedge and compass) has captivated geometers for centuries. The Gauss--Wantzel theorem \cite{Cox} translates this question in the framework of number theory: it states that a regular $n$-sided polygon is constructible if and only if $n$ is the product of a power of $2$ and any number (possibly none) of distinct Fermat primes i.e., primes of the form $F_l:=2^{2^l}+1$, $l\ge 0$. 

The sequence of (ordered) products of Fermat numbers\footnote{The sequence starts with the product of no such numbers which is $1$ by convention.} $(\mathbf{f}_{2,n})_{n\ge 0}$, starting with
\[1, \underline{3}, \underline{5}, 15, \underline{17}, 51, 85, 255, \underline{257}, 771,1285, 3855, 4369, 13107, 21845, 65535, \underline{65537},\ldots\]
actually  appears as entry {\tt A001317} in Sloane's Encyclopedia \cite{Sloane}.
Note that the sequence of Fermat primes, which appears in the OEIS \cite{Sloane} under entry {\tt A019434}, is not completely known: the primality of Fermat numbers is in general an open problem and only the numbers $F_0,F_1,\ldots,F_4$ (that have been underlined here) are known to be prime.

This sequence has in turn intriguing properties from the point of view of cominatorics. It turns out that it can be extracted from Pascal's triangle. Indeed, for any prime number $p$, we can consider the elements of Pascal's triangle modulo $p$, ${n \choose i} \bmod{p}$, where for any integer $k$, we let $k\bmod p$ or even $[k\% p]$ denote the unique integer in $\{0,\ldots,p-1\}$ congruent to $k$ modulo $p$. For instance, for $p=2$,  we get
\[
\left[ {n \choose i} \bmod{p}\right]_{\substack{n\ge 0, \\ 0\le i \le n}} = 
1 | 1, 1 | 1, 0, 1 | 1, 1, 1, 1 | 1, 0, 0, 0, 1 | 1, 1, 0, 0, 1, 1 |\cdots
\]
where bars separate consecutive terms in the sequence. We thus find Sierpi\'nski's triangle (the usual Pascal's triangle modulo 2). 

Identifying each row of the triangle to an integer through the base $p$-expansion, we get the sequence $(\mathbf{t}_{p,n})_{n\ge 0}$ defined, for $n\ge 0$, by 
\[\mathbf{t}_{p,n}=\sum_{i=0}^n \left[ {n \choose i} \bmod{p}\right] p^i.\]
These numbers $\mathbf{t}_{2,n}$ are sometimes called {\em Roberts' numbers} \cite{Granville}. It is easily seen from the examples above that the first few terms of the sequences $\mathbf{f}_{2,n}$ and $\mathbf{t}_{2,n}$ coincide. It was observed by several authors that the sequences are indeed equal. Conway and Guy refer to Gardner \cite{Conway,Gardner}. Investigating connections of Fermat numbers with Pascal's triangle, Krizek {\em et al.} in their monograph \cite[Chap.~8]{Krizek} mention the earlier work of Hewgill \cite{Hewgill}. 

For $p=3$, the sequence $\mathbf{t}_{3,n}$ also appears in the OEIS \cite{Sloane} as entry {\tt A173019}
$$(\mathbf{t}_{3,n})_{n\ge 0}=1, 4, 16, 28, 112, 448, 784, 3136, 12301, 19684, 78736, 314944,\ldots.$$
It turns out that it no longer coincides with the natural generalization $\mathbf{f}_{3,n}$ of $\mathbf{f}_{2,n}$ defined as the (ordered) products of numbers of the form $3^{3^l}+1,\,l\ge 0$.
 
In Section~\ref{sec:rec}, considering the $n$th row of Pascal's triangle mod $p$, we write $n=n_k\,p^k+s$ where $p^k$ is the largest power of $p$ smaller or equal to $n$, $n_k\in\{1,\ldots,p-1\}$ and $s=n\bmod p^k$. We give a simple formula that expresses $\mathbf{t}_{p,n}$ from  $\mathbf{t}_{p,s}$. This formula enables us to recover the equality of the sequences $\mathbf{f}_{2,n}$ and $\mathbf{t}_{2,n}$ and explain why in general $\mathbf{f}_{p,n}\neq\mathbf{t}_{p,n}$. We also interpret this formula in terms of self-similarity of Pascal's triangle mod $p$.

In Section~\ref{sec: polynomials}, we consider another approach: we study polynomial identities whose evaluation at the specific value $p$ gives the sequence $(\mathbf{t}_{p,n})_{n\ge 0}$. Evaluations at other values give generalized versions of these sequences having similar properties.

In Section~\ref{sec:nim}, we start with the simple observation\footnote{It follows directly from Pascal's rule.} that $\mathbf{t}_{2,n+1}=\mathbf{t}_{2,n}\oplus (2\, \mathbf{t}_{2,n})$ where $\oplus$ is the classical Nim-sum (addition digit-wise modulo~$2$ without carry). This leads us to study another related sequence $(N(m))_{m\ge 0}:=(m\oplus 2m)_{m\ge 0}$ of which $(\mathbf{t}_{2,n})_{n\ge 0}$ is a subsequence, and more generally, the sequence $(N_p(m))_{m\ge 0}:=(m\oplus_p p\, m)_{m\ge 0}$, where $\oplus_p$ is addition digit-wise modulo~$p$ without carry. We show that $N_p(m)$ is a $p$-regular sequence whereas $\mathbf{t}_{p,n}$ is not.

In Section~\ref{sec:par}, we consider a particular partition of the set $\{N(m)\mid m\ge 1\}$. It turns out that $(N(m))_{m\ge 0}$ is a well-understood permutation of the sequence of evil numbers, those numbers whose base-$2$ expansions have an even number of ones (i.e., the characteristic sequence of the considered set is given by the Thue--Morse sequence). We then naturally extend this result to any prime $p$ by showing that the characteristic sequence of the set $\{N_p(m)\mid m\ge 0\}$ is a generalization of the Thue--Morse sequence: it is the set of numbers whose alternate sum-of-digit is zero modulo~$p$.
Finally, in \cite{All}, an exact formula for the summatory function of evil numbers is given. Here we consider the summatory function of $(N(m))_{m\ge 0}$ taking advantage of the known permutation.

In Section~\ref{sec:trinomial}, we examine the problem in three dimensions and consider Pascal's pyramid made of trinomial coefficients. We define a sequence analogous to $\mathbf{t}_{p,n}$: when the pyramid is intersected with convenient planes whose equation is of the form $x+y+z=n$ for some integer $n$, we get rows of coefficients modulo~$p$. Similarly to what is done in Section~\ref{sec:rec}, we derive a recurrence relation for the corresponding integer sequence.  Finally we study the relation existing between coefficients modulo $p$ occurring at specific positions. In particular, we show that Pascal's pyramid modulo $p$ is $p$-automatic.

Note that, in this text, appear the sequences {\tt A001317}, {\tt A001969}, {\tt A003188}, {\tt A019434}, {\tt A048724}, {\tt A071770}, {\tt A173019} and {\tt A242399} from the OEIS \cite{Sloane}.

\section{A recursive formula}\label{sec:rec}

In this section, for any prime $p$, considering an index $n\neq 0$ whose base $p$-expansion is $n=\sum_{\ell=0}^kn_\ell p^\ell$, with $n_k\neq 0$, we let $\rep_p(n)$ denote the \textit{base-$p$ representation} of the integer $n>0$ i.e., the word\footnote{By convention, $0$ is represented by the empty word $\varepsilon$.} $n_k\cdots n_0$ over $\{0,\ldots,p-1\}$. We also decompose $n$ as $n=n_k\,p^k+s$ where $0<n_k<p$ and $s= n \mod p^k$. With Theorem~\ref{thm:sec1} we describe a formula to compute $\mathbf{t}_{p,n}$ from $\mathbf{t}_{p,s}$. We essentially follow the same description as in \cite{Granville}, which makes use of Lucas' theorem, which we now recall. 
\begin{theorem}[Lucas]Let $p\ge 2$ be a prime and let $m,n$ be non-negative integers. If $\rep_p(m)=m_k\cdots m_0$ and $\rep_p(n)=n_k\cdots n_0$\footnote{With the convention that the shortest representation is padded with extra leading zeroes as most significant digits when the two base-$p$ representations have different lengths.}, then
$$\binom{m}{n}\equiv \prod_{j=0}^k \binom{m_j}{n_j}\pmod{p},$$
with the convention that $\binom{a}{b}=0$ if $a<b$.
\end{theorem}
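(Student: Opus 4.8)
The plan is to run the classical generating-function argument in the polynomial ring $\mathbb{F}_p[x]$. The crux is the observation that for $0<j<p$ the binomial coefficient $\binom{p}{j}=\frac{p!}{j!\,(p-j)!}$ is divisible by $p$: the prime $p$ divides the numerator $p!$ but, since $0<j<p$, divides neither $j!$ nor $(p-j)!$. Hence $(1+x)^p\equiv 1+x^p\pmod p$, and an immediate induction on $j$ gives $(1+x)^{p^j}\equiv 1+x^{p^j}\pmod p$ for every $j\ge 0$.

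Next I would write $m$ in base $p$ as $m=\sum_{j=0}^k m_j p^j$ and factor accordingly:
$$(1+x)^m=\prod_{j=0}^k\Bigl((1+x)^{p^j}\Bigr)^{m_j}\equiv\prod_{j=0}^k\Bigl(1+x^{p^j}\Bigr)^{m_j}\pmod p.$$
The coefficient of $x^n$ on the left is $\binom{m}{n}$. On the right, a monomial $x^n$ arises by selecting from each factor $\bigl(1+x^{p^j}\bigr)^{m_j}$ a term $\binom{m_j}{n_j}x^{n_j p^j}$ with $0\le n_j\le m_j$, subject to $\sum_{j=0}^k n_j p^j=n$. Since $0\le m_j\le p-1$, every admissible choice has $0\le n_j\le p-1$, so $(n_k,\dots,n_0)$ is forced to be exactly the sequence of base-$p$ digits of $n$ (padded with leading zeros when needed), by uniqueness of the base-$p$ expansion. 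Thus the coefficient of $x^n$ on the right is $\prod_{j=0}^k\binom{m_j}{n_j}$, and comparing both sides yields the claimed congruence.

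The step I expect to be the only delicate one — and hence the main obstacle in a fully careful write-up — is the bookkeeping in this coefficient comparison: one must verify that no carrying can occur, i.e. that the exponents $n_j$ extracted from the product genuinely form the base-$p$ representation of $n$, and that whenever some digit $n_j$ of $n$ exceeds the corresponding digit $m_j$ of $m$ the relevant product of binomials vanishes, which is exactly what the stated convention $\binom{a}{b}=0$ for $a<b$ records (and which also matches the padding convention for representations of different lengths). Once this is spelled out, the theorem follows.
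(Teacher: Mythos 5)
Your proof is correct. The paper does not actually prove Lucas' theorem --- it merely recalls it as a classical result (citing its use in Granville's survey) --- so there is no in-paper argument to compare against. What you give is the standard generating-function proof: the congruence $(1+x)^p\equiv 1+x^p \pmod{p}$ from $p\mid\binom{p}{j}$ for $0<j<p$, iterated to $(1+x)^{p^j}\equiv 1+x^{p^j}$, followed by factoring $(1+x)^m$ along the base-$p$ digits of $m$ and extracting the coefficient of $x^n$. The one step you flag as delicate is handled correctly: since each selected exponent $n_j$ satisfies $0\le n_j\le m_j\le p-1$, uniqueness of the base-$p$ expansion forces the tuple $(n_k,\dots,n_0)$ to be the digit string of $n$, so no carrying can occur, and the vanishing of the product when some digit of $n$ exceeds the corresponding digit of $m$ is exactly the stated convention $\binom{a}{b}=0$ for $a<b$. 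The argument is complete as written.
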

In our situation, we use this theorem to compute ${n\choose i}$ for $n=\sum_{\ell=0}^kn_\ell p^\ell$, with $n_k\neq 0$ and $i\leq n$. We write $i=\sum_{\ell=0}^ki_\ell p^\ell$ (allowing leading zeroes whenever $i<p^k$) and since $i\le n$, we have $0\le i_k\le n_k$. From Lucas' theorem, we then have
\begin{equation}\label{eq:Lucas-mult-mu}
\binom{n}{i}\equiv\binom{n_k}{i_k}\binom{n\bmod p^k}{i\bmod p^k} \pmod{p}.
\end{equation}

\begin{proposition}\label{prop:sec1}
If $n=\sum_{\ell=0}^{k} n_\ell p^\ell$ with $n_k \neq 0$ and if $s=n\bmod p^k$, then we have 
\begin{equation}
\label{eq: recurrence}
\mathbf{t}_{p,n} = \sum_{m=0}^{n_k} \left[\sum_{j=0}^{s} \left({n_k \choose m} {s \choose j} \bmod p \right) p^j\right] p^{mp^k}.
\end{equation}
\end{proposition}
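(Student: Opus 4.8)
The strategy is to start from the definition of $\mathbf{t}_{p,n}$ and split the index $i$ in the defining sum according to its most significant digit, exactly as the decomposition $n = n_k p^k + s$ suggests. Writing $i = m p^k + j$ with $0 \le m \le n_k$ and $0 \le j < p^k$, and noting that $i \le n$ forces $j \le s$ when $m = n_k$ (and $j$ ranges over all of $\{0,\ldots,p^k-1\}$ when $m < n_k$), I would reorganize
\[
\mathbf{t}_{p,n} = \sum_{i=0}^{n} \left[\binom{n}{i} \bmod p\right] p^i
\]
as a double sum over $m$ and $j$. The exponent $p^i = p^{m p^k + j} = p^j \cdot p^{m p^k}$ factors cleanly, which is what produces the nested shape of~\eqref{eq: recurrence}.

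The key input is Lucas' theorem in the form~\eqref{eq:Lucas-mult-mu}: since the base-$p$ digit of $i$ in position $k$ is $m$ and the lower digits of $i$ encode $i \bmod p^k = j$, we have $\binom{n}{i} \equiv \binom{n_k}{m}\binom{s}{j} \pmod p$. Hence $\left[\binom{n}{i} \bmod p\right] = \left(\binom{n_k}{m}\binom{s}{j} \bmod p\right)$, and substituting gives
\[
\mathbf{t}_{p,n} = \sum_{m=0}^{n_k} p^{m p^k} \sum_{j} \left(\binom{n_k}{m}\binom{s}{j} \bmod p\right) p^j,
\]
where the inner sum over $j$ is a priori over $\{0,\ldots,p^k-1\}$. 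The one remaining point is to see that the inner sum can be truncated at $j = s$: for $j > s$ we have $\binom{s}{j} = 0$, so those terms vanish and we may replace the upper limit $p^k - 1$ by $s$, yielding precisely~\eqref{eq: recurrence}.

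The main obstacle, such as it is, is bookkeeping with the ranges of $m$ and $j$ and making sure the claimed formula does not inadvertently drop or double-count terms — in particular confirming that when $m < n_k$ the index $j$ really is allowed to run all the way to $p^k - 1$ (so that truncating at $s$ is legitimate only because of the vanishing binomial, not because of the constraint $i \le n$), while when $m = n_k$ the constraint $i \le n$ already limits $j$ to $\{0,\ldots,s\}$ — and then checking these two cases agree with the single uniform formula. Once the index manipulation is pinned down, the proof is a direct substitution using Lucas' theorem with no further estimates needed.
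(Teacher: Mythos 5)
Your proposal is correct and follows essentially the same route as the paper: both start from the definition of $\mathbf{t}_{p,n}$, apply Lucas' theorem in the form~\eqref{eq:Lucas-mult-mu} to the split $i=i_k p^k+(i\bmod p^k)$, and observe that the only surviving terms are those with $i_k\le n_k$ and $i\bmod p^k\le s$ since $\binom{s}{j}=0$ for $j>s$. Your explicit bookkeeping of the ranges of $m$ and $j$ in the two cases $m<n_k$ and $m=n_k$ is a slightly more detailed rendering of the same observation.
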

\begin{proof}
By definition and Equation \eqref{eq:Lucas-mult-mu} (and using the same notation) we have 
\[\mathbf{t}_{p,n} =\sum_{i=0}^n \left[ {n \choose i} \bmod{p}\right] p^i=\sum_{i=0}^n \left[\binom{n_k}{i_k}\binom{n\bmod p^k}{i\bmod p^k} \bmod{p}\right] p^i.\] 
In order to conclude, it is sufficient to observe that the set of indices $i\leq n$ satisfying the condition $\binom{n_k}{i_k}\binom{n\bmod p^k}{i\bmod p^k}\neq 0\bmod{p}$ is precisely the set of those indices $i$ which satisfy the same condition and decompose as $i=i_k\,p^k+(i\bmod p^k)$, with $i_k\le n_k$, $i\bmod p^k\le s$.
We thus have 
\[\mathbf{t}_{p,n} =\sum_{m=0}^{n_k} \left[\sum_{j=0}^s\binom{n_k}{m}\binom{s}{j} \bmod{p}\right] p^{mp^k+j},\]
and the result follows.
\end{proof}
We now interpret Proposition \ref{prop:sec1} as a recursive formula relating $\mathbf{t}_{p,n}$ and $\mathbf{t}_{p,s}$. We simply observe that the expression within brackets in \eqref{eq: recurrence} is obtained by multiplying each digit of $\rep_p(\mathbf{t}_{p,s})$ by ${n_k \choose m}$ and then taking the remainder mod $p$. In order to express this fact, we introduce some notation.

We denote by $\mathbb{Z}/(p\mathbb{Z})$ or simply $\mathbb{Z}_p$ the field of integers modulo $p$. The canonical projection $\pi\colon\mathbb{Z}\to\mathbb{Z}_p$ induces a bijection from $\mathbb{Z}_{<p}=\{0,\ldots,p-1\}$ to $\mathbb{Z}_p$ and enables the definitions of operations on $\mathbb{Z}_{<p}$ for which $\pi$ is an isomorphism. These are simply the addition and multiplication mod $p$.
For integers $0\le b \le a$ such that $\binom{a}{b}\neq 0\pmod p$ we let $\mu_{a,b}$ denote the one-to-one correspondence 
$$\mu_{a,b}:\mathbb{Z}_{<p}\to\mathbb{Z}_{<p}\colon x\mapsto \binom{a}{b} \cdot x\bmod{p}.$$
This map is a permutation of $\mathbb{Z}_{<p}$ since it corresponds via $\pi$ to the left multiplication by the class of $\binom{a}{b}\neq 0$ in $\mathbb{Z}_p$. Also, we have $\mu_{a,b}(x)=0$ if and only if $x=0$. This fact will be extensively used in this text (notably in Section~\ref{sec:trinomial}).
Note also that for $b=0$ or $a=b$, $\mu_{a,b}$ is just the identity. 
We extend this map to a morphism of the free monoid $\{0,\ldots,p-1\}^*$ equipped with concatenation by setting
$$\mu_{a,b}(z_m\cdots z_0)=\mu_{a,b}(z_m)\cdots \mu_{a,b}(z_0).$$ 
Finally, for a finite sequence $\delta_k\cdots \delta_0$ of digits in $\{0,\ldots,p-1\}$, we let $\val_p(\delta_k\cdots \delta_0)$ denote the \textit{$p$-evaluation} $\sum_{i=0}^k \delta_i\, p^i$. 
We are now able to translate Proposition \ref{prop:sec1}.
\begin{theorem}\label{thm:sec1}
If $n=\sum_{\ell=0}^{k} n_\ell p^\ell$ with $n_k \neq 0$ and if $s=n\bmod p^k$, then we have 
\begin{equation}
\label{eq: recurrence1}
\mathbf{t}_{p,n} = \sum_{m=0}^{n_k} p^{mp^k} \val_p(\mu_{n_k,m}(\rep_p(\mathbf{t}_{p,s}))).
\end{equation}
\end{theorem}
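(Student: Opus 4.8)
The plan is to show that Theorem~\ref{thm:sec1} is merely a notational repackaging of Proposition~\ref{prop:sec1}, so the entire argument consists of unwinding the definitions of $\mu_{a,b}$, $\rep_p$ and $\val_p$ and matching them term by term against Equation~\eqref{eq: recurrence}. First I would fix the notation of the statement: write $n=\sum_{\ell=0}^k n_\ell p^\ell$ with $n_k\neq 0$ and $s=n\bmod p^k$, and let $\rep_p(\mathbf{t}_{p,s})=\delta_s\cdots\delta_0$ be the base-$p$ representation of $\mathbf{t}_{p,s}$, so that by definition $\mathbf{t}_{p,s}=\sum_{j=0}^s \bigl[\binom{s}{j}\bmod p\bigr]p^j$, i.e.\ the digit $\delta_j$ equals $\binom{s}{j}\bmod p$ for $0\le j\le s$ (note that the index of the most significant digit is exactly $s$, since $\binom{s}{s}=1\neq 0$, which is why I can write the representation as $\delta_s\cdots\delta_0$).

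Next I would compute $\mu_{n_k,m}(\rep_p(\mathbf{t}_{p,s}))$ for a fixed $m\in\{0,\ldots,n_k\}$. Since $\binom{n_k}{m}\neq 0\pmod p$ (because $0\le m\le n_k<p$), the map $\mu_{n_k,m}$ is defined, and by its extension to a monoid morphism we get $\mu_{n_k,m}(\delta_s\cdots\delta_0)=\mu_{n_k,m}(\delta_s)\cdots\mu_{n_k,m}(\delta_0)$, whose $j$th digit is $\binom{n_k}{m}\cdot\delta_j\bmod p=\binom{n_k}{m}\binom{s}{j}\bmod p$. Applying $\val_p$ to this word therefore yields
\[
\val_p\bigl(\mu_{n_k,m}(\rep_p(\mathbf{t}_{p,s}))\bigr)=\sum_{j=0}^s\left(\binom{n_k}{m}\binom{s}{j}\bmod p\right)p^j,
\]
which is precisely the expression inside the brackets of Equation~\eqref{eq: recurrence}. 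Multiplying by $p^{mp^k}$ and summing over $m$ from $0$ to $n_k$ gives exactly the right-hand side of \eqref{eq: recurrence1}, so \eqref{eq: recurrence1} coincides with \eqref{eq: recurrence} and the theorem follows from Proposition~\ref{prop:sec1}.

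The only point that requires a moment of care — and the closest thing to an obstacle — is the bookkeeping of leading zeroes and word lengths: one must check that padding $\rep_p(\mathbf{t}_{p,s})$ with leading zeroes does not change $\val_p$ (it does not, since $\mu_{n_k,m}(0)=0$ and $0\cdot p^i=0$), and that the most significant nonzero digit of $\mathbf{t}_{p,s}$ sits in position $s$, so that the index ranges in the two formulas genuinely align. Once this is observed, no further computation is needed; the proof is a direct translation and can be written in a few lines.
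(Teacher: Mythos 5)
Your proposal is correct and follows essentially the same route as the paper: unwind the definitions of $\rep_p$, $\mu_{n_k,m}$ and $\val_p$ to identify each summand of \eqref{eq: recurrence1} with the bracketed expression in \eqref{eq: recurrence}, then invoke Proposition~\ref{prop:sec1}. Your extra remarks on digit positions and leading zeroes are a harmless refinement of the same argument.
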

\begin{proof}
By definition, we have $\rep_p(\mathbf{t}_{p,s})=(\binom{s}{j}\bmod p)_{j=0,\ldots,s}$. Then we compute 
\[\mu_{n_k,m}(\rep_p(\mathbf{t}_{p,s}))=(\binom{n_k}{m}\,\binom{s}{j}\bmod p)_{j=0,\ldots,s}\]
and finally
\[\val_p(\mu_{n_k,m}(\rep_p(\mathbf{t}_{p,s})))=\sum_{j=0}^s(\binom{n_k}{m}\,\binom{s}{j}\bmod p)p^j,\]
and the result follows from Proposition \ref{prop:sec1}.
\end{proof}

Theorem \ref{thm:sec1} now has an interpretation in terms of the rows of Pascal's triangle mod $p$, as follows. We start indexing rows and columns of Pascal's triangle at $0$. The $n$th row of this triangle is nothing but $\rep_p(\mathbf{t}_{p,n})$ and the $s$th is the word $P_s:=\rep_p(\mathbf{t}_{p,s}):=t_{s,0}\cdots t_{s,s}$ made of $s+1$ entries over $\mathbb{Z}_{<p}$. Equation \eqref{eq: recurrence1} suggests to work with words of length $p^k$, so we pad this row with trailing zeroes:
$$Q_s:=t_{s,0}\cdots t_{s,s} \underbrace{0\cdots 0}_{p^k-s-1}.$$

\begin{example} Let $p=5$ and $n=23$. We have $\rep_5(23)=43$ thus $k=1$, $n_1=4$ and $s=3$.
The third row of Pascal's triangle (modulo $5$) is with $1,3,3,1$, so $P_3=1331$ and $Q_3=13310$ (see Figure~\ref{fig:04} where different colors represent different values modulo $5$).
\begin{figure}[b]
  \centering
  \includegraphics[height=2cm]{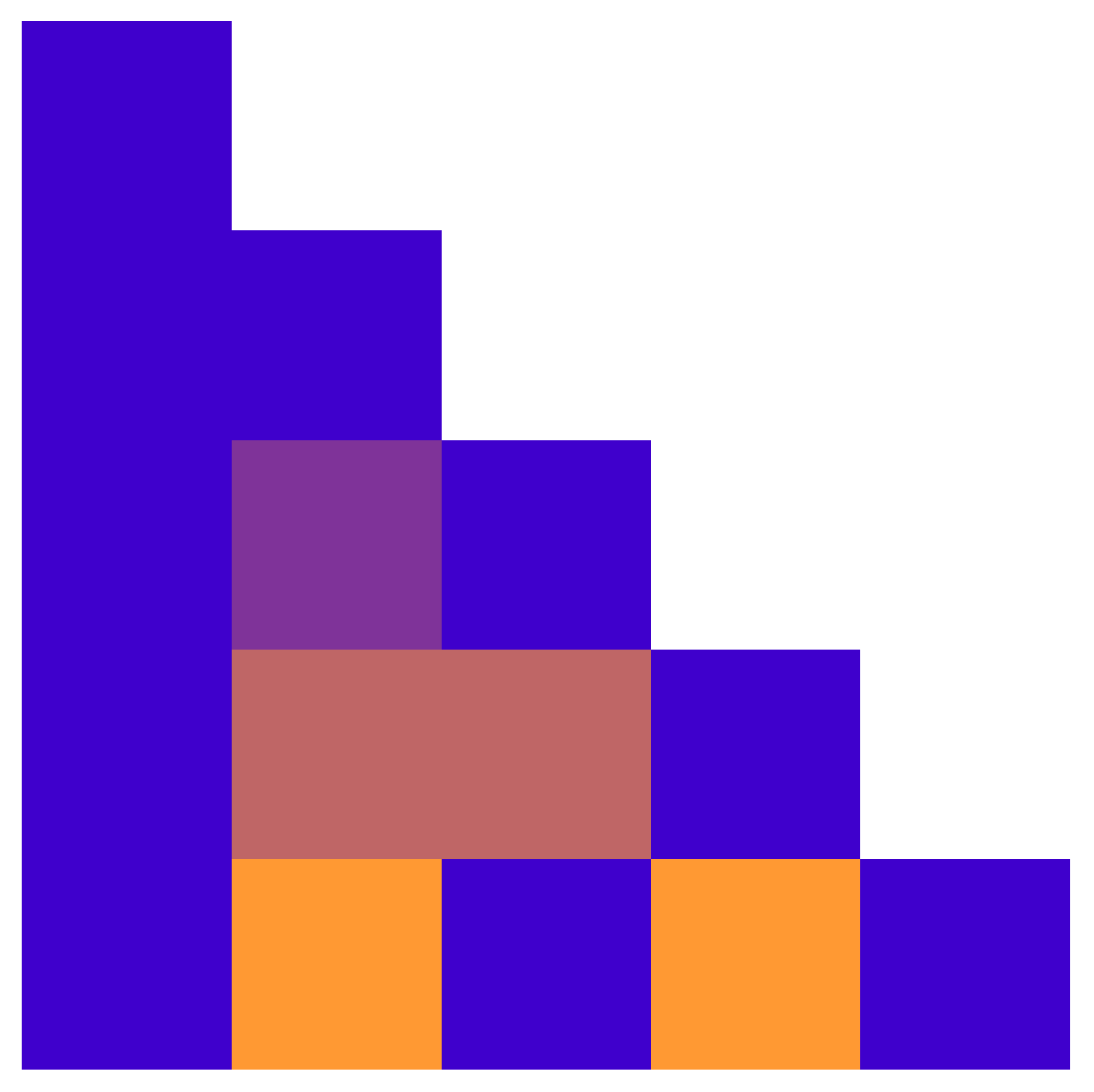}
  \caption{The first five rows of Pascal's triangle mod~$5$.}
  \label{fig:04}
\end{figure}
\end{example}
From Theorem \ref{thm:sec1} we directly obtain
$$P_n=\mu_{n_k,0}(Q_s)\, \mu_{n_k,1}(Q_s)\, \cdots\, \mu_{n_k,n_k-1}(Q_s)\, \mu_{n_k,n_k}(P_s).$$
By definition of $Q_s$, we have
$$P_n=\mu_{n_k,0}(P_s) 0^{p^k-s-1} \mu_{n_k,1}(P_s) 0^{p^k-s-1} \cdots\, \mu_{n_k,n_k-1}(P_s) 0^{p^k-s-1} \mu_{n_k,n_k}(P_s).$$
For instance, the third row will help describe the $23$rd row, for which 
\[P_{23}=133104224013310422401331\] (see Figure~\ref{fig:2024}).
\begin{figure}[h!tbp]
  \centering
  \includegraphics[height=2cm]{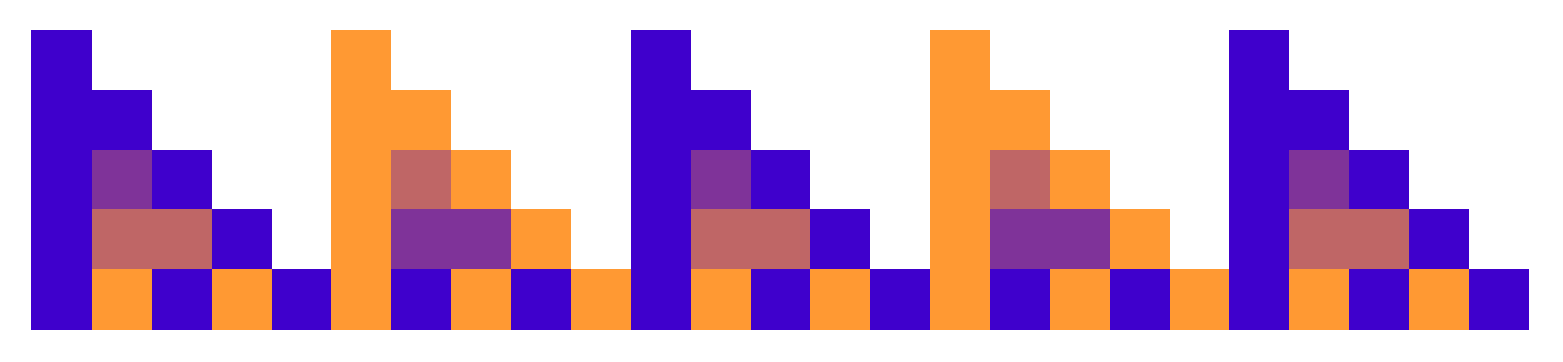}
  \caption{The rows $20\le i\le 24$ of Pascal's triangle mod~$5$.}
  \label{fig:2024}
\end{figure}
Similarly, for the row with index $48$, we have $\rep_5(48)=143$ thus $k=2$, $n_k=1$ and $s=23$. So $P_{48}$ is described in terms of $P_{23}$ and therefore in terms of $P_3$ and $Q_3$ (see Figure~\ref{fig:5054}).
\begin{figure}[h!tbp]
  \centering
  \includegraphics[height=1.8cm]{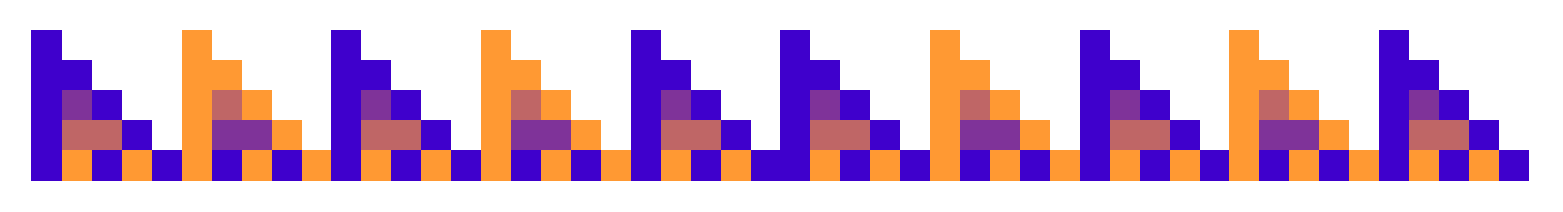}
  \caption{The rows $45\le i\le 49$ of Pascal's triangle mod~$5$.}
  \label{fig:5054}
\end{figure}

For the special case $p=2$, we have $n_k=1$, no permutation is needed in Theorem \ref{thm:sec1} because $\mu_{1,0}=\mu_{1,1}$ is the identity. Also, in Equation \eqref{eq: recurrence}, with $p=2$ and $n_k=1$, the sum is restricted to two terms giving the $k$th Fermat number times $\mathbf{t}_{2,s}$. So from Theorem \ref{thm:sec1} or Proposition \ref{prop:sec1}, we recover the following result.
\begin{corollary} For all $k\ge 0$ and all $0\le s<2^k$, we have
  $$\mathbf{t}_{2,2^k+s}=(2^{2^k}+1)\, \mathbf{t}_{2,s}.$$  
\end{corollary}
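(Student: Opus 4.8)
The plan is to read this off directly from Theorem~\ref{thm:sec1} (equivalently Proposition~\ref{prop:sec1}) by specializing to $p=2$. Given $k\ge 0$ and $0\le s<2^k$, set $n=2^k+s$. Since $2^k\le n<2^{k+1}$, the base-$2$ expansion of $n$ is $n=\sum_{\ell=0}^k n_\ell 2^\ell$ with leading digit $n_k=1$, and $n\bmod 2^k=s$; so the hypotheses of Theorem~\ref{thm:sec1} are met with exactly this $k$ and this $s$, and the outer sum in \eqref{eq: recurrence1} runs only over $m\in\{0,1\}$.

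Next I would use the remark made right after the definition of $\mu_{a,b}$: both $\mu_{1,0}$ and $\mu_{1,1}$ are the identity on $\{0,1\}^*$, since $\binom{1}{0}=\binom{1}{1}=1$. Hence $\mu_{1,m}(\rep_2(\mathbf{t}_{2,s}))=\rep_2(\mathbf{t}_{2,s})$ and therefore $\val_2(\mu_{1,m}(\rep_2(\mathbf{t}_{2,s})))=\val_2(\rep_2(\mathbf{t}_{2,s}))=\mathbf{t}_{2,s}$ for $m=0$ and $m=1$. Substituting into \eqref{eq: recurrence1} collapses the sum to
$$\mathbf{t}_{2,2^k+s}=2^{0}\,\mathbf{t}_{2,s}+2^{2^k}\,\mathbf{t}_{2,s}=(2^{2^k}+1)\,\mathbf{t}_{2,s},$$
which is the assertion. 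One can equally run this through \eqref{eq: recurrence}: with $p=2$ and $n_k=1$ the bracketed factor equals $\sum_{j=0}^s(\binom{s}{j}\bmod 2)2^j=\mathbf{t}_{2,s}$ for each of $m=0,1$, while the corresponding powers $p^{m2^k}$ are $1$ and $2^{2^k}$.

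There is essentially no obstacle here; the statement is only a corollary precisely because, for $p=2$, the leading digit $n_k$ is forced to be $1$ and the binomial coefficients $\binom{1}{m}$ that would otherwise scramble the digits are all equal to $1$. Should a self-contained argument be preferred, the same two lines go through starting from \eqref{eq:Lucas-mult-mu}: for $i\le n$ one has $\binom{n}{i}\equiv\binom{1}{i_k}\binom{s}{\,i\bmod 2^k}\pmod 2$, which is nonzero only when $i_k\in\{0,1\}$ and $i\bmod 2^k\le s$; splitting the defining sum $\mathbf{t}_{2,n}=\sum_i[\binom{n}{i}\bmod 2]2^i$ according to $i_k=0$ (contributing $\sum_{j=0}^s(\binom{s}{j}\bmod 2)2^j=\mathbf{t}_{2,s}$) and $i_k=1$ (contributing $2^{2^k}\mathbf{t}_{2,s}$) gives the result again.
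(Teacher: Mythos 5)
Your proof is correct and follows exactly the route the paper takes: specializing Theorem~\ref{thm:sec1} (or Proposition~\ref{prop:sec1}) to $p=2$, where $n_k=1$ forces $\mu_{1,0}=\mu_{1,1}$ to be the identity, so the two-term sum collapses to $(2^{2^k}+1)\,\mathbf{t}_{2,s}$. Nothing to add.
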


From this corollary, one deduces that \cite[p.~113]{AS} (an unpublished result attributed to Larry Roberts) for $n\ge 1$, 
\begin{equation}\label{eq:LarryRoberts}
\mathbf{t}_{2,n}= \prod_{j:n_j=1} (2^{2^j}+1),
\end{equation}
which is the product of the Fermat numbers for those indices that occur in the base-$2$ expansion of $n$. In particular, since the base-$2$ expansions of $2n$ and $2n+1$ only differ by their last digit, we have
\begin{equation}\label{F3}
  \frac{\mathbf{t}_{2,2n+1}}{\mathbf{t}_{2,2n}}=2^{2^0}+1=3 
\end{equation}
which is the first Fermat number $F_0$.

\subsection{A variant sequence} If one only looks at the zero or non-zero binomial coefficients modulo $p$ (so, considering only divisibility by $p$ as, for instance, in \cite{Haesler}), we can study the sequence
$$\mathbf{t}_{p,n}'=\sum_{i=0}^n \text{sgn}\left[ {n \choose i} \bmod{p}\right]\, 2^i$$
where the sign function maps any non-zero value to $1$ (and $0$ to $0$). For instance, $\mathbf{t}_{2,n}=\mathbf{t}_{2,n}'$ and the first few terms of $(\mathbf{t}_{3,n}')_{n\ge 0}$ are
$$1, 3, 7, 9, 27, 63, 73, 219, 511, 513, 1539, 3591, 4617,\ldots.$$
In that case, we can directly adapt Equation \eqref{eq: recurrence} where the multiplication by $\binom{n_k}{m}$ is no more necessary, or in Theorem \ref{thm:sec1} where  there is no permutation to consider, and we get the following result.

\begin{proposition} If $n=\sum_{\ell=0}^{k} n_\ell p^\ell$ with $n_k \neq 0$ and if $s=n\bmod p^k$, then we have 
  $$\mathbf{t}_{p,n}'=\mathbf{t}_{p,s}' \sum_{m=0}^{n_k} 2^{mp^k}.$$
\end{proposition}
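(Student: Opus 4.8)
The plan is to mimic the proof of Theorem~\ref{thm:sec1} (or rather of Proposition~\ref{prop:sec1}), keeping track only of whether a binomial coefficient vanishes modulo~$p$ rather than of its precise residue. First I would write, directly from the definition,
\[\mathbf{t}_{p,n}'=\sum_{i=0}^n \mathrm{sgn}\!\left[{n\choose i}\bmod p\right]2^i,\]
and invoke Lucas' theorem in the factored form \eqref{eq:Lucas-mult-mu}: since $\binom{a}{b}$ is nonzero modulo the prime $p$ whenever it is defined and $b\le a$, the key observation is that $\binom{n}{i}\not\equiv 0\pmod p$ if and only if \emph{both} $\binom{n_k}{i_k}\not\equiv 0$ and $\binom{s}{i\bmod p^k}\not\equiv 0$ modulo~$p$; equivalently, writing $i=i_k p^k+j$ with $0\le j<p^k$, one needs $i_k\le n_k$ and $j\le s$. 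Hence $\mathrm{sgn}[\binom{n}{i}\bmod p]=\mathrm{sgn}[\binom{n_k}{i_k}\bmod p]\cdot\mathrm{sgn}[\binom{s}{j}\bmod p]$.

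Next I would substitute this factorization into the sum, splitting the index $i$ as $i_k p^k+j$ with $0\le i_k\le n_k$ and $0\le j\le s$ (all other indices contribute $0$). This gives
\[\mathbf{t}_{p,n}'=\sum_{m=0}^{n_k}\sum_{j=0}^{s}\mathrm{sgn}\!\left[{n_k\choose m}\bmod p\right]\mathrm{sgn}\!\left[{s\choose j}\bmod p\right]2^{mp^k+j}.\]
Because $p$ is prime and $m\le n_k$, the factor $\mathrm{sgn}[\binom{n_k}{m}\bmod p]$ is simply $1$ for every $m\in\{0,\dots,n_k\}$, so it drops out entirely — this is exactly where the permutations $\mu_{n_k,m}$ of Theorem~\ref{thm:sec1} disappear. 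Then factor the double sum:
\[\mathbf{t}_{p,n}'=\left(\sum_{m=0}^{n_k}2^{mp^k}\right)\left(\sum_{j=0}^{s}\mathrm{sgn}\!\left[{s\choose j}\bmod p\right]2^{j}\right)=\mathbf{t}_{p,s}'\sum_{m=0}^{n_k}2^{mp^k},\]
which is the claimed identity.

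There is no real obstacle here beyond bookkeeping; the only point needing a moment's care is the claim that $\mathrm{sgn}[\binom{n_k}{m}\bmod p]=1$ for all $0\le m\le n_k$, which relies on $p$ being prime and $n_k<p$ (so that no factor in $\binom{n_k}{m}$ is divisible by~$p$, i.e.\ $\binom{n_k}{m}$ is a nonzero residue). Equivalently, one may simply note that in the $2$-evaluation the coefficient of $2^i$ is already a $0/1$ value, so applying the permutations $\mu_{n_k,m}$ of Theorem~\ref{thm:sec1} to a word over $\{0,1\}$ fixes it pointwise, and the result is then immediate from \eqref{eq: recurrence1} after replacing $\val_p$ by the base-$2$ evaluation. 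Either route makes the proof a direct specialization of the arguments already given.
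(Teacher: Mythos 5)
Your main argument is correct and is essentially the adaptation of Proposition~\ref{prop:sec1} that the paper has in mind (the paper itself only sketches this, saying the factor $\binom{n_k}{m}$ becomes unnecessary): Lucas gives the factorization of the sign, $\binom{n_k}{m}\not\equiv 0\pmod p$ because $n_k<p$, and the double sum factors. One caveat on your closing ``equivalent'' route: for $p>2$ the permutation $\mu_{n_k,m}$ does \emph{not} fix a word over $\{0,1\}$ pointwise (e.g.\ for $p=5$, $\mu_{2,1}(1)=2$), and in Theorem~\ref{thm:sec1} it is applied to the residue word $\rep_p(\mathbf{t}_{p,s})$, not to its sign word; the correct statement is that $\mathrm{sgn}\circ\mu_{n_k,m}=\mathrm{sgn}$ because $\mu_{n_k,m}$ is a bijection fixing $0$ --- which is exactly the fact your main argument already uses.
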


\section{Polynomial identities}\label{sec: polynomials}

As a preliminary, we state a classical result about formal power series 
$$\prod_{i=0}^\infty (1+X^{p^i}+X^{2p^i}+\cdots+X^{(p-1)p^i})=\sum_{n=0}^\infty X^n.$$
Expanding the left-hand side, we see that this result is equivalent to the fact that every integer has a unique base-$p$ expansion. In this section, we  will often make use of similar arguments to obtain polynomial identities. Our developments are based on \cite{Krizek} (also see \cite{Hewgill}).

We will consider the polynomial rings $\mathbb{Z}[X]$ and $\mathbb{Z}_p[X]$ over $\mathbb{Z}$ and $\mathbb{Z}_p$ respectively. The canonical projection $\pi\colon\mathbb{Z}\to\mathbb{Z}_p : n \mapsto [n]_p$ extends to a ring homomorphism from $\mathbb{Z}[X]$ to $\mathbb{Z}_p[X]$, wich we also denote by $\pi$. Note that the restriction of this map to the subset $\mathbb{Z}_{<p}[X]$ of $\mathbb{Z}[X]$ made of polynomials with coefficients in $\{0,\ldots,p-1\}$ is injective.
Using this notion, we are able to present the following result, which was already noticed on several occasions for $p=2$.
\begin{proposition}\label{lem:poly}
If $n=\sum_{\ell=0}^{k} n_\ell p^\ell$ with $n_k \neq 0$ and if $\prod_{i=0}^k\binom{n_i}{\delta_i}<p$ for all indices $\delta_0,\ldots,\delta_k$, then we have
  \begin{equation}
    \label{eq:poly1}
    \sum_{j=0}^n \left[ {n \choose j} \bmod{p}\right] X^j=\prod_{i=0}^k (1+X^{p^i})^{n_i}
  \end{equation}
in $\mathbb{Z}[X]$.
\end{proposition}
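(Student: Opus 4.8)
The plan is to prove the polynomial identity \eqref{eq:poly1} by a double-counting / generating-function argument, comparing the coefficient of $X^j$ on both sides. The right-hand side $\prod_{i=0}^k (1+X^{p^i})^{n_i}$ expands, via the binomial theorem applied to each factor, as
\[
\prod_{i=0}^k (1+X^{p^i})^{n_i}=\prod_{i=0}^k\left(\sum_{\delta_i=0}^{n_i}\binom{n_i}{\delta_i}X^{\delta_i p^i}\right)=\sum_{\delta_0,\ldots,\delta_k}\left(\prod_{i=0}^k\binom{n_i}{\delta_i}\right)X^{\sum_{i=0}^k\delta_i p^i},
\]
where each $\delta_i$ ranges over $\{0,\ldots,n_i\}$. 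The key combinatorial observation is that, since $n_i<p$, every tuple $(\delta_0,\ldots,\delta_k)$ with $0\le\delta_i\le n_i$ is precisely the base-$p$ expansion of the integer $j=\sum_{i=0}^k\delta_i p^i$, so distinct tuples give distinct exponents $j$, and the exponents that occur are exactly those $j\le n$ with $j_i\le n_i$ for all $i$ (writing $j=\sum j_i p^i$). Hence the coefficient of $X^j$ on the right is $\prod_{i=0}^k\binom{n_i}{j_i}$, with the convention that $\binom{n_i}{j_i}=0$ when $j_i>n_i$, so this formula is in fact valid for all $j$ with $0\le j\le n$.

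**Next I would** invoke Lucas' theorem: for $0\le j\le n$ we have $\binom{n}{j}\equiv\prod_{i=0}^k\binom{n_i}{j_i}\pmod p$. Combined with the previous paragraph, the coefficient of $X^j$ on the right-hand side of \eqref{eq:poly1} is congruent mod $p$ to $\binom{n}{j}$. It therefore remains only to check that this coefficient, which is a priori an integer in $\{0,1,\ldots\}$, actually lies in $\{0,\ldots,p-1\}$, so that it equals $\binom{n}{j}\bmod p$ on the nose (in $\mathbb{Z}$, not merely mod $p$). But this is exactly the hypothesis $\prod_{i=0}^k\binom{n_i}{\delta_i}<p$ imposed in the statement. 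Thus both sides of \eqref{eq:poly1} are polynomials in $\mathbb{Z}_{<p}[X]\subseteq\mathbb{Z}[X]$ with equal coefficients, and the identity holds in $\mathbb{Z}[X]$.

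**The main obstacle**, such as it is, is bookkeeping rather than mathematics: one must be careful to state the correspondence between $k$-tuples $(\delta_0,\ldots,\delta_k)$ and integers $j$ cleanly — using that $n_i\le p-1$ so that no tuple produces a ``carry'' — and to handle the conventional value $\binom{a}{b}=0$ for $b>a$ consistently, so that the formula ``coefficient of $X^j$ equals $\prod_i\binom{n_i}{j_i}$'' is uniformly true and matches the form $\binom{n}{j}\bmod p$ given by Lucas. A clean way to organize this is to first establish the expansion of the right-hand side as a sum over tuples, then push everything through $\pi\colon\mathbb{Z}[X]\to\mathbb{Z}_p[X]$ using Lucas' theorem to get equality in $\mathbb{Z}_p[X]$, and finally lift back to $\mathbb{Z}[X]$ using the injectivity of $\pi|_{\mathbb{Z}_{<p}[X]}$ together with the size hypothesis; this mirrors the role that $\pi$ and $\mathbb{Z}_{<p}[X]$ are set up to play just before the statement.
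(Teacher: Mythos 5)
Your proof is correct and follows essentially the same route as the paper: expand the right-hand side as a sum over digit tuples, use uniqueness of base-$p$ expansions (no carries since $n_i<p$) to identify the coefficient of $X^j$ as $\prod_i\binom{n_i}{j_i}$, and use the hypothesis $\prod_i\binom{n_i}{\delta_i}<p$ to upgrade the mod-$p$ congruence to an equality of integers. The only cosmetic difference is that you invoke Lucas' theorem to pin down each coefficient modulo $p$, whereas the paper shows that both polynomials project to $\pi\left((1+X)^n\right)$ in $\mathbb{Z}_p[X]$ via $(1+X)^{p^i}\equiv 1+X^{p^i}$ and then uses injectivity of $\pi$ on $\mathbb{Z}_{<p}[X]$ --- two equivalent packagings of the same fact.
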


\begin{proof}
We first observe that both polynomials 
\begin{equation}\label{eq:polPnQn}
P_n(X) = \sum_{j=0}^n \left[ {n \choose j} \bmod{p}\right] X^j \quad \text{and} \quad Q_n(X) = \prod_{i=0}^k (1+X^{p^i})^{n_i} 
\end{equation} have the same projection in $\mathbb{Z}_p[X]$. Indeed, on the one hand, we have
\[
\pi\left(P_n(X)\right) = \sum_{j=0}^n \left[\binom{n}{j} \right]_p X^j = \pi\left( (1+X)^n\right).
\]
On the other hand, using that $\pi$ is a ring homomorphism and taking into account the equalities $\pi((1+X)^{p^i}) = \pi(1+X^{p^i})$, for $i \in \mathbb{N}$, we find
\[
\pi(Q_n(X)) = \pi \left(\prod_{i=0}^k \left(1+X^{p^i}\right)^{n_i}\right) =  \pi\left(\prod_{i=0}^k (1+X)^{n_ip^i} \right) 
                     = \pi\left( (1+X)^n\right).
\]
Secondly, we check that the polynomials $P_n(X)$ and $Q_n(X)$ have coefficients in $\mathbb{Z}_{<p}$. Since it is direct for $P_n(X)$, we concentrate on $Q_n(X)$ and find
\[
\prod_{i=0}^k \left(1+X^{p^i}\right)^{n_i} = \prod_{i=0}^k \sum_{\delta_i = 0}^{n_i} \binom{n_i}{\delta_i} X^{\delta_i p^i}
                                                       = \sum_{\delta_k = 0}^{n_k} \ldots \sum_{\delta_0 = 0}^{n_0} \left(\prod_{i=0}^k \binom{n_i}{\delta_i}\right) X^{\delta_k p^k + \ldots + \delta_0}. 
\]
Using the uniqueness of the base-$p$ expansion of every integer together with the assumption of the proposition, we obtain that $Q_n(X)$ belongs $\mathbb{Z}_{<p}[X]$. The conclusion then follows from the injectivity of the restriction of $\pi$ to this set. 
\end{proof}

\begin{remark}\label{rk:condition pour 2}
  For $p=2$, the assumption $\binom{n_k}{\delta_k}\cdots \binom{n_0}{\delta_0}<2$ of Proposition~\ref{lem:poly} always holds because $n_i\le 1$ and thus $\binom{n_i}{\delta_i}\le 1$ for all $i$. Evaluating \eqref{eq:poly1} at $X=2$, we get back $\mathbf{t}_{2,n}=\prod_{i=0}^k(2^{2^i}+1)^{n_i}$, which is~\eqref{eq:LarryRoberts}. 
  
  We even get a family of sequences by evaluating this polynomials identity at other values of $X$. For instance, for $X=3$, the first few terms of the corresponding sequence are
  $$1, 4, 10, 40, 82, 328, 820, 3280, 6562, 26248, 65620,\ldots$$
  and of course, such a sequence $(\mathbf{x}_{n})_{n\ge 0}$ satisfies
  $$\mathbf{x}_{2^i+s}=(3^{2^i}+1)\, \mathbf{x}_s.$$
\end{remark}

\begin{remark}
On the other hand, for $p=3$, $\binom{n_i}{\delta_i}=2$ whenever $n_i=2$ and $\delta_i=1$. In all other situations, the corresponding binomial coefficient is $1$. So $\binom{n_k}{\delta_k}\cdots \binom{n_0}{\delta_0}<3$ holds for all $\delta_i$ if and only if at most one digit $n_i=2$ occurs. Precisely,
$$\binom{n_k}{\delta_k}\cdots \binom{n_0}{\delta_0}=2^{\# \{ i: (n_i,\delta_i)=(2,1)\}}.$$  
Let $p\ge 3$. If the condition in Proposition~\ref{lem:poly}  is not met, then the polynomials $P_n(X)$ and $Q_n(X)$ (defined in Equation~\eqref{eq:polPnQn}) are no longer equal and we have a non-zero difference in $\mathbb{Z}[X]$ expressed as
  $$ \prod_{i=0}^k (1+X^{p^i})^{n_i}-\sum_{j=0}^n \left[ {n \choose j} \bmod{p}\right] X^j$$
  $$=\sum_{\delta_k=0}^{n_k}\cdots \sum_{\delta_0=0}^{n_0} \left\{\binom{n_k}{\delta_k}\cdots \binom{n_0}{\delta_0} -  \left[ {n \choose \val_p(\delta_k\, p^k +\cdots +\delta_0)} \bmod{p}\right]\right\} X^{\delta_k\, p^k +\cdots +\delta_0}.$$
  Let us denote the latter polynomial by $\kappa_{p,n}(X)$. Otherwise stated, we get
  $$\sum_{j=0}^n \left[ {n \choose j} \bmod{p}\right] X^j=\prod_{i=0}^k (1+X^{p^i})^{n_i}-\kappa_{p,n}(X).$$
\end{remark}

\begin{remark}\label{Fp}
  Let us write $n=n_k\, p^k+\cdots +n_1\, p+n_0$ with $n_i \in \{0,\ldots,p-1\}$ for all $0\le i \le k$. Hence $pn+1=n_k\, p^{k+1}+\cdots +n_1\, p^2+n_0\, p+1$. Let us compare the polynomials
  $$\sum_{j=0}^{pn} \left[ {pn \choose j} \bmod{p}\right] X^j \text{ and }
  \sum_{j=0}^{pn+1} \left[ {pn+1 \choose j} \bmod{p}\right] X^j$$
  over $\mathbb{Z}[X]$.
The second one is equal to 
$$(1+X)\prod_{i=0}^k (1+X^{p^{i+1}})^{n_i}-\kappa_{p,pn+1}(X).$$
We can rewrite $\kappa_{p,pn+1}(X)$ as
\begin{multline*}
  \sum_{\delta_k=0}^{n_k}\cdots \sum_{\delta_0=0}^{n_0}\sum_{j=0}^1 \\
  \left\{\binom{n_k}{\delta_k}\cdots \binom{n_0}{\delta_0}\binom{1}{j} -  \left[ {pn+1 \choose \val_p(\delta_k\, p^{k+1} +\cdots +\delta_0\, p+j)} \bmod{p}\right]\right\} \\ X^{\delta_k\, p^{k+1} +\cdots +\delta_0\, p+j}.
\end{multline*}
Since $\binom{1}{j}=1$, Lucas' theorem gives
$${pn+1 \choose \val_p(\delta_k\, p^{k+1} +\cdots +\delta_0\, p+j)} \equiv
{pn \choose \val_p(\delta_k\, p^{k+1} +\cdots +\delta_0\, p)} \pmod{p}.$$
We conclude that $\kappa_{p,pn+1}(X)=(1+X)\cdot \kappa_{p,pn}(X)$. Consequently, we have
\begin{eqnarray*}
  \sum_{j=0}^{pn+1} \left[ {pn+1 \choose j} \bmod{p}\right] X^j&=&(1+X) \cdot \left( \prod_{i=0}^k (1+X^{p^{i+1}})^{n_i}-\kappa_{p,pn}(X)\right)\\
  &=&(1+X) \cdot  \sum_{j=0}^{pn} \left[ {pn \choose j} \bmod{p}\right] X^j.
\end{eqnarray*}
Evaluating this polynomial at $X=p$, we generalize \eqref{F3} to
$$\frac{\mathbf{t}_{p,pn+1}}{\mathbf{t}_{p,pn}}=p+1.$$
 
Notice that we can carry these computations because $\binom{1}{j}=1$. Considering $pn+r$ with $r>1$ is therefore trickier.
\end{remark}
\section{A Nim interlude}\label{sec:nim}

Let $k\ge 2$ be an integer. Recall that a sequence $(x_n)_{n\ge 0}$ of integers is {\em $k$-regular} if the $\mathbb{Z}$-module generated by the set of subsequences
$$\{(x_{k^en+r})_{n\ge 0}\mid e\ge 0, 0\le r<k^e\}$$
is finitely generated, i.e., these subsequences are linear combinations of a finite number of sequences. This notion extends to multidimensional sequences. In particular, a bidimensional sequence $(x_{m,n})_{m,n\ge 0}$ is {\em $k$-regular}  if the $\mathbb{Z}$-module generated by the set of subsequences
$$\{(x_{k^em+r,k^en+s})_{m,n\ge 0}\mid e\ge 0, 0\le r,s<k^e\}$$
is finitely generated. See \cite[Chap.~14,16]{AS}. 

Let us focus again on $\mathbf{t}_{2,n}$. In this section, we show that the sequence $(\mathbf{t}_{2,n})_{n\ge 1}$ naturally appears as a subsequence of a well-studied $2$-regular sequence that we denote by $(N(m))_{m\ge 0}$.  For the sake of presentation, we limit ourselves to the case $p=2$ but a similar discussion can be carried on for any modulo.

We let $m\oplus n$ denote the Nim-sum of the integers $m,n$, i.e., addition digit-wise modulo~$2$ of their base-$2$ expansions (without carry). For instance, $5\oplus 12=9$. From Pascal's rule $\binom{n+1}{i}=\binom{n}{i}+\binom{n}{i-1}$, we have that
\begin{equation}
  \label{eq:z2n}
  \mathbf{t}_{2,n+1}=\mathbf{t}_{2,n}\oplus (2 \mathbf{t}_{2,n}).
\end{equation}
For all $r,s\in\{0,1\}$, we have
$$(2m+r)\oplus (2n+s)=2(m\oplus n) + (r+s \bmod{2}),$$
which means that the bidimensional sequence $(m\oplus n)_{m,n\ge 0}$ is $2$-regular; see \cite[Example~16.5.5]{AS}.
In view of relation \eqref{eq:z2n}, consider the subsequence $(N(m))_{m\ge 0}$ extracted from the previous bidimensional Nim-sum array and defined by $N(m)=m\oplus 2m$ for all $m\ge 0$.
The sequence $(N(m))_{m\ge 0}$ starts with values given in Table~\ref{tab:Nm}, which are also depicted in Figure~\ref{fig:nim2i}, and appears as entry {\tt A048724} in the OEIS \cite{Sloane} (it also appears in \cite{Nguyen}).
\begin{table}[h!tb]
 $$\begin{array}{r|cccccccccccccccccc}
    m & 0 & 1 & 2 & 3 & 4 & 5 & 6 & 7 & 8 & 9 & 10 & 11 & 12 & 13 & 14 & 15 & 16 & 17\\
    \hline
N(m)& 0& \underline{3}& 6& \underline{5}& 12& \underline{15}& 10& 9& 24& 27& 30& 29& 20& 23& 18& \underline{17}& 48& \underline{51}
  \end{array}$$
  \caption{The sequence $(N(m))_{m\ge 0}$.}
  \label{tab:Nm}
\end{table}
\begin{figure}[h!tb]
  \centering
  \includegraphics[height=4.5cm]{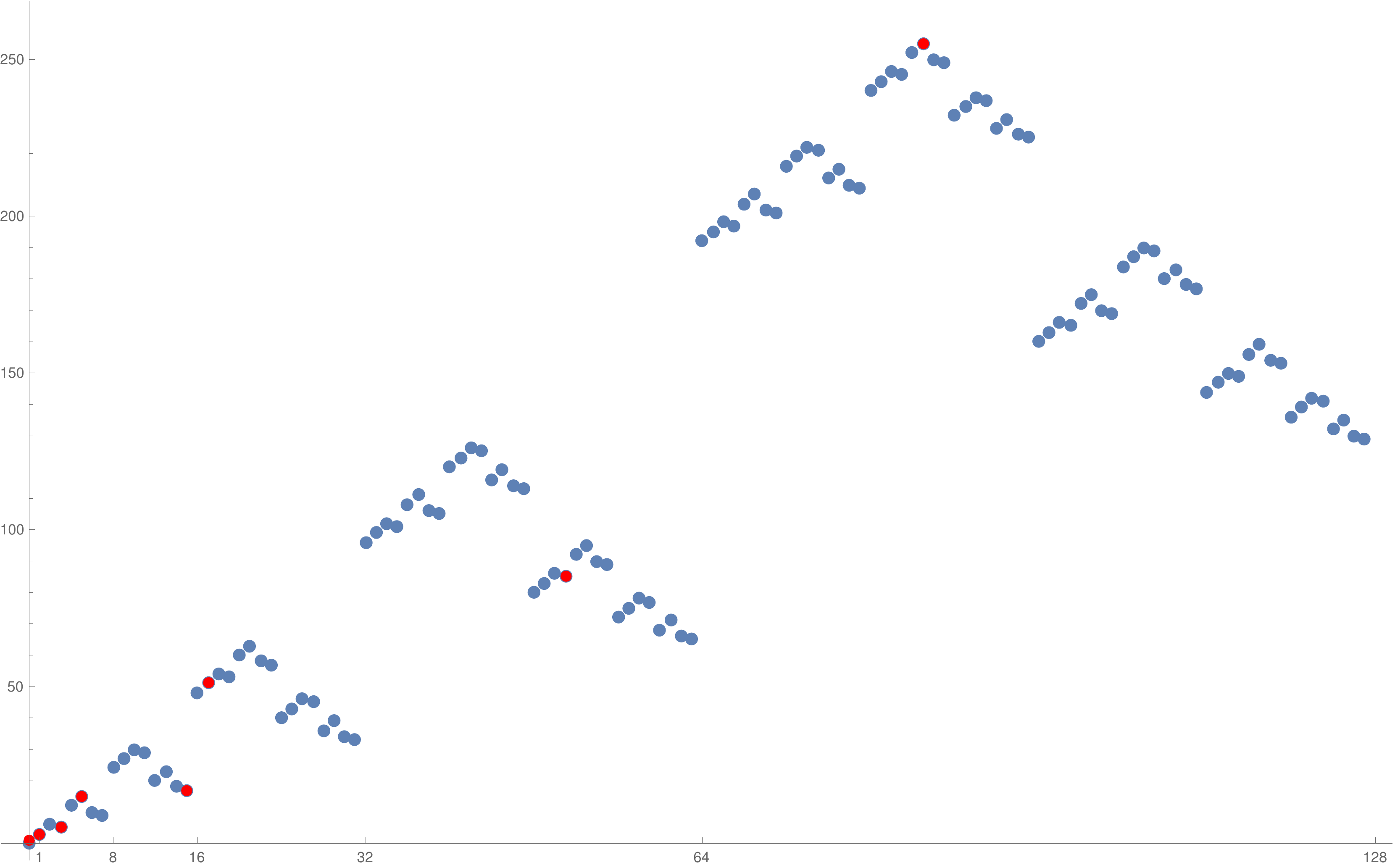}
  \caption{The first few values of $(N(m))_{m\ge 0}$.}
  \label{fig:nim2i}
\end{figure}
In Figure~\ref{fig:nim2i}, observe that a pattern repeats itself between two consecutive powers of $2$, suggesting that the considered sequence is $2$-regular. This property is shown below.
The red dots in Figure~\ref{fig:nim2i} represent the first few values of the subsequence $(\mathbf{t}_{2,n})_{n\ge 0}$ of $N(m))_{m\ge 0}$. Since $|\rep_2(\mathbf{t}_{2,n})|=n+1$, note that there is only one red dot in an interval made of consecutive powers of $2$. 
Playing with base-$2$ expansions, it is easily seen that any subsequence of the form $(N(4m+r))_{m\ge 0}$, $0\le r<4$, can be expressed as a linear combination of the three sequences
$$(N(m))_{m\ge 0},\ (N(2m+1))_{m\ge 0},\ (1,1,1,1,\ldots).$$
Indeed we have
\begin{equation}
  \label{eq:recN4}
\left\{
  \begin{array}{rcl}
    N(4m)&=&4N(m), \\ 
    N(4m+1)&=&4N(m)+3,\\ 
    N(4m+2)&=&2N(2m+1),\\ 
    N(4m+3)&=&2N(2m+1)-1.\\
  \end{array}\right.   
\end{equation}
For instance, the first relation holds as $4m \oplus 8m = 4(m \oplus 2m)$ since $\rep_2(8m)=\rep_2(m)000$ and $\rep_2(4m)=\rep_2(m)00$.
Now let us come back to the sequence $(\mathbf{t}_{2,n})_{n\ge 1}$ of interest. From \eqref{eq:z2n}, this is a subsequence of $(N(m))_{m\ge 0}$. Namely, $\mathbf{t}_{2,0}=1$ and, for all $n>0$,
\begin{equation}
  \label{eq:ext}
  \mathbf{t}_{2,n}=N(\mathbf{t}_{2,n-1}).
\end{equation}
In Table~\ref{tab:Nm}, $(\mathbf{t}_{2,n})_{n\ge 1}$ appears underlined. In the next section, we focus on this kind of subsequence extraction.

\begin{remark}
Note that the same argument can be carried out for a general prime $p$ as long as one defines a suitable Nim-sum in base $p$: addition digit-wise modulo~$p$ (without carry). For instance, $23\oplus_313=6$. We have $\mathbf{t}_{p,n+1}=\mathbf{t}_{p,n}\oplus_p(p\, \mathbf{t}_{p,n})$ and we may define $N_p(m)=m\oplus_p (p\, m)$ to get $\mathbf{t}_{p,n}=N_p(\mathbf{t}_{p,n-1})$.
In the following, we keep the notation $N(m)$ for $N_2(m)$.
\end{remark}

\subsection{Regularity of $(N_{p}(m))_{m\ge 0}$ and finite automata} 
We assume that the reader has some basic knowledge of automata theory. A (deterministic finite) automaton is a machine devised to recognize/accept some sequences of symbols read once at a time. In our setting, these symbols are usually pairs or tuples of digits. See for instance \cite{AS,BHMV} for some background on the matter.

In the remaining of this section, we show that the sequence $(N_p(m))_{m\ge 0}$ is $p$-synchronized but that the sequence $(\mathbf{t}_{p,n})_{n\ge 0}$ is not $p$-regular. We recall the necessary definitions. Let $d\ge 1$ and $k\ge 2$ be integers. A subset $X$ of $\mathbb{N}^d$ is {\em $k$-recognizable} (or said to be a {\em $k$-synchronized relation} with the terminology of \cite{Carpi}) if the language
$$\{{\tt pad}(\rep_k(x_1),\ldots,\rep_k(x_d))\mid (x_1,\ldots,x_d)\in X\}$$
  is accepted by a finite automaton with input alphabet $\{0,\ldots,k-1\}^d$ and where ${\tt pad}(m_1,\ldots,m_d)$ is the $d$-tuple of words of the same length
  $$\left(0^{M-|m_1|} m_1,\ldots, 0^{M-|m_d|} m_d\right)$$
  with $M=\max_i |m_i|$. A sequence $(x_n)_{n\ge 0}$ is {\em $k$-synchronized} (see \cite{Carpi}) if the set $\{(n,x_n)\mid n\ge 0\}$ is $k$-recognizable. Every $k$-synchronized sequence is $k$-regular \cite[Prop.~2.6]{Carpi}.

  \begin{proposition}
  Let $a,b$ be non-negative integers. The set of pairs $\{(m,am+b)\mid m\ge 0\}$ is $p$-recognizable. Otherwise stated, the sequence $(am+b)_{m\ge 0}$ is $p$-synchronized.
  \end{proposition}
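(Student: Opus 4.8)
The plan is to construct an explicit finite automaton (or, equivalently, to exhibit the defining language as $k$-recognizable) for the relation $\{(m,am+b)\mid m\ge 0\}$ with $k=p$. The cleanest approach is the classical carry-automaton for addition and multiplication by a constant in base $p$: when reading the digits of $m$ and of $am+b$ synchronously from least significant to most significant, the arithmetic relation $\mathrm{val}_p(y\text{-digits}) = a\cdot\mathrm{val}_p(m\text{-digits}) + b$ can be verified while keeping track only of a bounded carry. Concretely, if we feed in pairs of digits $(d_i, e_i)$ (digit of $m$, digit of $am+b$) starting from position $0$, the running carry $c_i$ is updated by $a\,d_i + c_i = e_i + p\,c_{i+1}$, with initial carry $c_0 = b$. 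So first I would set this up carefully, reading from the right, and observe that the carry stays in the finite range $\{0,1,\ldots,a+\lceil b/(p-1)\rceil\}$ or some similarly explicit bound, so the state set is finite.

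The main subtlety is that the paper's definition of $p$-recognizability uses $\mathrm{pad}$, i.e., the two representations are left-padded to a common length and read from the \emph{most significant} digit. There are two standard ways to handle this. One option is to note that a subset of $\mathbb{N}^d$ is recognizable reading most-significant-first if and only if it is recognizable reading least-significant-first (the reversal of a regular language is regular, and the padding on the left corresponds to padding on the right in the reversed words), so I can build the natural least-significant-first carry automaton and invoke closure of regular languages under reversal. The other option is to build the most-significant-first automaton directly, which requires guessing the number of trailing digits and is messier; I would avoid it. Either way, I also need to check the padding compatibility: $m$ and $am+b$ need not have the same number of base-$p$ digits, but $|\mathrm{rep}_p(am+b)| - |\mathrm{rep}_p(m)|$ is bounded (for $m$ large it is a fixed constant depending on $a$), so the automaton can absorb the finitely many leading-zero pairs on the shorter side; handling $0^{M-|m|}$ versus the genuine digits of $am+b$ is a routine finite bookkeeping matter, and small values of $m$ (finitely many) can be patched in by hand if necessary.

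Here is the order of steps I would carry out. (1) Reduce to the least-significant-first reading by invoking reversal-closure of regular languages, so it suffices to recognize $\{(\mathrm{rep}_p(m)^R \# \mathrm{rep}_p(am+b)^R) \mid m\ge 0\}$ suitably padded on the right. (2) Describe the carry automaton: states are the possible carry values $c\in\{0,\ldots,C\}$ for an explicit bound $C$, the start state is $b$ (or a short initial gadget that reads the low-order digits encoding $b$), on input pair $(d,e)\in\{0,\ldots,p-1\}^2$ from state $c$ we go to state $c'$ provided $a d + c = e + p c'$ (and $c'\le C$), with no transition otherwise. (3) Determine the accepting condition: after all genuine digits of $m$ are consumed, the remaining digits of $am+b$ must exactly spell out the leftover carry in base $p$; so the ``tail'' behaviour (reading padding zeros on the $m$-side while the $am+b$-side still has digits) must flush the carry to $0$, and we accept precisely when the carry reaches $0$ at the end. (4) Conclude that the language is regular, hence the relation is $p$-recognizable, and therefore $(am+b)_{m\ge 0}$ is $p$-synchronized by definition.

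\begin{proof}
We show that the language
$$L=\{{\tt pad}(\rep_p(m),\rep_p(am+b))\mid m\ge 0\}$$
over the alphabet $\{0,\ldots,p-1\}^2$ is accepted by a finite automaton; the claim about $p$-synchronization is then immediate from the definition.

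Since the class of regular languages is closed under reversal and since reversing the words in ${\tt pad}(u,v)$ merely moves the common left-padding to the right, it suffices to recognize the reversed relation, where both representations are read least-significant digit first. So we read pairs $(d_i,e_i)$ of digits, $i=0,1,2,\ldots$, where $d_i$ is the $i$th digit of $m$ and $e_i$ the $i$th digit of $am+b$, with the shorter word padded on the right by zeros up to the common length.

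Set $C=a+b$. The automaton has state set $\{0,1,\ldots,C\}$, initial state $b$, and a transition from state $c$ on input $(d,e)$ to state $c'$ whenever
$$a\,d+c=e+p\,c',\qquad 0\le c'\le C,$$
and no transition otherwise. An easy induction on $i$ shows that after reading $(d_0,e_0),\ldots,(d_{i-1},e_{i-1})$ the automaton is in the (at most one) state $c_i$ determined by
$$\sum_{\ell=0}^{i-1}e_\ell\,p^\ell + c_i\,p^{i} \;=\; b+a\sum_{\ell=0}^{i-1}d_\ell\,p^\ell,$$
and that the carry indeed never exceeds $C=a+b$: if $c\le C$ and $d\le p-1$ then $a d + c\le a(p-1)+a+b=ap+b\le p(a+b)=pC$, so $c'=\lfloor (ad+c)/p\rfloor\le C$. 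We make a state accepting if and only if it equals $0$. Reading all the digits of $m$ together with the corresponding digits of $am+b$, and then (if $am+b$ is longer) continuing with $d=0$ on the $m$-side while the remaining high-order digits of $am+b$ flush the carry, the automaton ends in state $0$ precisely when the displayed identity holds with no carry left, i.e.\ precisely when the read word is ${\tt pad}(\rep_p(m),\rep_p(am+b))^R$. Finitely many small values of $m$ for which the padding lengths behave exceptionally can be added to the language explicitly without destroying regularity.

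Hence $L$ is regular, the set $\{(m,am+b)\mid m\ge 0\}$ is $p$-recognizable, and $(am+b)_{m\ge 0}$ is $p$-synchronized.
\end{proof}
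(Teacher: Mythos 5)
Your proof is correct, and it fills in what the paper deliberately leaves out: the paper's own "proof" is a two-line citation, observing either that this is a classical exercise in automata theory or that the set $\{(m,am+b)\}$ is first-order definable in $\langle\mathbb{N},+,V_p\rangle$ (Büchi arithmetic), where multiplication by a constant is definable, and then invoking the Büchi--Bruyère equivalence between definability and $p$-recognizability. You instead carry out the "classical exercise" explicitly via the least-significant-digit-first carry automaton with carry bound $a+b$, together with reversal-closure to reconcile with the most-significant-first ${\tt pad}$ convention; the carry bound verification ($ad+c\le ap+b\le p(a+b)$) and the invariant are right. What the explicit route buys is a self-contained, quantitative construction; what the paper's route buys is brevity and the fact that the same one-line argument immediately handles compositions with other synchronized relations. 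One small imprecision: your automaton as described also accepts over-padded words $(0u,0v)$ with a leading $(0,0)$ pair, which are not of the form ${\tt pad}(\rep_p(m),\rep_p(am+b))$; this is not a matter of "finitely many small $m$" but is fixed routinely by intersecting with the regular language of words whose first letter is not $(0,0)$ (plus the correctly padded words for $m=0$). This does not affect the validity of the conclusion.
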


  \begin{proof}
    This is a classical exercise in automata theory or, one can make use of the fact that this set of pairs is $p$-definable (i.e., definable by a first order formula in $\langle \mathbb{N},+,V_p\rangle$) --- see, for instance, \cite{BHMV}. Indeed, multiplication by a constant is definable in this structure. 
  \end{proof}

  \begin{proposition}
    The set of triples $\{(m,n,m\oplus_p n)\mid m,n\ge 0\}$ is $p$-recognizable.
  \end{proposition}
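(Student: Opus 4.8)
The plan is to construct an explicit finite automaton over the alphabet $\{0,\ldots,p-1\}^3$ that reads $\mathrm{pad}(\rep_p(m),\rep_p(n),\rep_p(w))$ and accepts precisely when $w=m\oplus_p n$. First I would work with the representations least-significant-digit first (equivalently, reverse all words); since the definition of $k$-recognizability in the excerpt uses most-significant-digit first with left-padding by zeroes, I would note that a language is recognizable iff its reversal is, so it suffices to build the automaton for the reversed (LSD-first) encoding, where the digit-wise rule becomes purely local. Reading LSD-first, at each position we see a triple $(a_i,b_i,c_i)\in\{0,\ldots,p-1\}^3$, and the condition $w=m\oplus_p n$ is simply $c_i\equiv a_i+b_i\pmod p$ at every position $i$. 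Because there are no carries in $\oplus_p$, this is a memoryless check.

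Concretely I would take a two-state automaton with states $\{q_{\mathrm{ok}},q_{\mathrm{dead}}\}$, initial and sole accepting state $q_{\mathrm{ok}}$, and transition function that stays in $q_{\mathrm{ok}}$ on reading $(a,b,c)$ when $c=(a+b)\bmod p$, and goes (and stays) in $q_{\mathrm{dead}}$ otherwise. One subtlety to address is the padding: since $\mathrm{pad}$ aligns the three words to the common length $M=\max(|\rep_p(m)|,|\rep_p(n)|,|\rep_p(w)|)$ by prepending zeroes to the shorter ones, in the LSD-first picture these zeroes appear as high-order positions, and there the rule $c=(a+b)\bmod p$ with $a,b,c\in\{0,p-1\}$ correctly forces $c=0$ whenever $a=b=0$, and $c=0$ is exactly what a valid leading zero of $w$ must be — and conversely a nonzero high digit of $m$ or $n$ forces the corresponding digit of $w$. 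So no special handling of the padding region is needed beyond checking that the input words genuinely have equal length, which is guaranteed by the definition of $\mathrm{pad}$. I would also remark, as an alternative shorter argument, that the relation $\{(m,n,w):w=m\oplus_p n\}$ is $p$-definable in $\langle\mathbb{N},+,V_p\rangle$ — one can express "the $p$-ary digit of $w$ at each position is the sum mod $p$ of those of $m$ and $n$" using $V_p$ to extract digits — so $p$-recognizability follows from the Büchi–Bruyère theorem as in \cite{BHMV}, exactly paralleling the proof of the preceding proposition.

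The main obstacle, such as it is, is purely bookkeeping: making sure the direction convention (MSD-first with left zero-padding, as in the paper's definition of $k$-recognizable) is reconciled with the fact that the digit-wise rule is naturally stated LSD-first, and confirming that the leading-zero positions created by $\mathrm{pad}$ are handled correctly by the same local rule rather than requiring an extra "we are still in the padding zone" state. Once that is checked, correctness of the automaton is immediate from the definition of $\oplus_p$ as carry-free digit-wise addition modulo $p$: an accepting run exists iff every position satisfies $c_i\equiv a_i+b_i\pmod p$, which holds iff $w=m\oplus_p n$. Hence the triple set is $p$-recognizable, completing the proof.
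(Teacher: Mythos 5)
Your proof is correct and is essentially the paper's own argument: the paper also observes that a single-state automaton looping on labels $(a,b,a+b\bmod p)$ suffices because there is no carry. Your extra care about reading direction and padding is harmless but unnecessary here, since the check is purely local and memoryless, so the same automaton works MSD-first or LSD-first.
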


  \begin{proof}
    A single-state automaton with a loop of labels $(a,b,a+b\bmod{p})$ is enough. There is no carry to take into account. 
  \end{proof}

Composing synchronized relations \cite{Carpi}, we get the following.
  
  \begin{corollary}
    Let $a,b$ be non-negative integers. The sequence $(m\oplus_p(am+b))_{m\ge 0}$ is $p$-synchronized. In particular, $(N_p(m))_{m\ge 0}$ is $p$-synchronized.
  \end{corollary}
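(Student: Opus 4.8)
The plan is to obtain the corollary directly from the composition closure of $p$-synchronized relations, together with the three previous statements in this subsection. The synchronized relations to be composed are: the graph of the affine map $m\mapsto am+b$, which is $p$-recognizable by the first proposition; and the graph of the ternary Nim-sum $(u,v)\mapsto u\oplus_p v$, which is $p$-recognizable by the second proposition. The sequence $(m\oplus_p(am+b))_{m\ge 0}$ is then the image of $m$ under the composite relation, so it remains only to spell out why composition preserves synchronization.

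First I would recall, following \cite{Carpi}, that $p$-recognizable relations over $\mathbb{N}^d$ are closed under the usual first-order operations: intersection, projection (existential quantification), and hence relational composition, since composing $R\subseteq\mathbb{N}^2$ with $S\subseteq\mathbb{N}^2$ amounts to forming $\{(m,z)\mid \exists y:(m,y)\in R \wedge (y,z)\in S\}$. Concretely, take $R=\{(m,am+b)\mid m\ge 0\}$ and $S=\{(y,y',y\oplus_p y')\mid y,y'\ge 0\}$; intersecting the cylindrified versions over the shared coordinate and projecting away the intermediate value $y=am+b$ yields exactly $\{(m,\,m\oplus_p(am+b))\mid m\ge 0\}$, which is therefore $p$-recognizable, i.e.\ the sequence $(m\oplus_p(am+b))_{m\ge 0}$ is $p$-synchronized. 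Alternatively, and perhaps more transparently, I would note that this set of triples is $p$-definable in $\langle\mathbb{N},+,V_p\rangle$: multiplication by the constant $a$ is definable in this structure, addition of the constant $b$ is trivially definable, and the Nim-sum predicate $u\oplus_p v=w$ is definable because it is a digit-wise condition expressible via $V_p$; the Büchi--Bruyère type theorem \cite{BHMV} then gives $p$-recognizability.

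For the ``in particular'' clause, I would simply instantiate $a=p$ and $b=0$, so that $m\oplus_p(pm)=N_p(m)$ by definition, giving that $(N_p(m))_{m\ge 0}$ is $p$-synchronized. Finally, invoking \cite[Prop.~2.6]{Carpi}, every $p$-synchronized sequence is $p$-regular, so in particular $(N_p(m))_{m\ge 0}$ is $p$-regular, recovering and generalizing the observation made earlier in the section for $p=2$ via the explicit relations in \eqref{eq:recN4}.

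I do not anticipate a genuine obstacle here: the entire content is packaging already-established facts through the closure properties of synchronized (equivalently, $p$-definable) relations. The only point requiring mild care is to make sure the composition is set up so that the intermediate variable is the one quantified away and that the padding convention in the definition of $k$-recognizability behaves well under this operation --- but this is exactly what \cite{Carpi} establishes in general, so it may simply be cited.
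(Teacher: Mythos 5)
Your proposal is correct and follows essentially the same route as the paper: the authors likewise form the $p$-recognizable set of triples $\{(m,am+b,m\oplus_p(am+b))\mid m\ge 0\}$ by combining the two preceding propositions and appeal to closure of synchronized relations under composition as in \cite{Carpi}. Your write-up merely spells out the cylindrification-and-projection step and the first-order definability alternative in more detail than the paper does.
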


  \begin{proof}
    Combining the above two propositions, the set
    $$\{(m,am+b,m\oplus_p (am+b))\mid m\ge 0\}$$
    is $p$-recognizable. 
  \end{proof}

  In Figure~\ref{fig:DFAz2n}, we have represented an automaton recognizing $\{(m,N(m))\mid m\ge 0\}$, where all transitions leading to a sink state are not drawn. The first few pairs of words that are accepted are
  $$\binom{\varepsilon}{\varepsilon},\ \binom{01}{11},\ \binom{010}{110},\ \binom{011}{101},\ \binom{0100}{1100},\ \binom{0101}{1111}$$
  which correspond to the pairs of integers $(0,0)$, $(1,3)$, $(2,6)$, $(3,5)$, $(4,12)$, $(5,15)$.
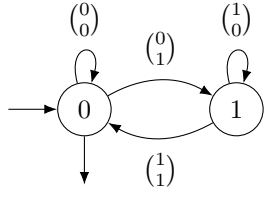
\begin{figure}[htb]
\begin{center}
\begin{tikzpicture}
\tikzstyle{every node}=[shape=circle, fill=none, draw=black,minimum size=20pt, inner sep=2pt]
\node(1) at (0,0) {$0$};
\node(2) at (2,0) {$1$};

\tikzstyle{every node}=[shape=circle, minimum size=5pt, inner sep=2pt]

\draw [-Latex] (-1,0) to node [above] {} (1);
\draw [-Latex] (1) to node [above] {} (0,-1);

\draw [-Latex] (1) to [loop above] node [above] {$\binom{0}{0}$} (1);
\draw [-Latex] (1) to [bend left] node [above] {$\binom{0}{1}$} (2);
\draw [-Latex] (2) to [loop above] node [above] {$\binom{1}{0}$} (2);
\draw [-Latex] (2) to [bend left] node [below] {$\binom{1}{1}$} (1);

\end{tikzpicture}
\end{center}
\caption{A DFA recognizing the pairs $(m,N(m))$.}
\label{fig:DFAz2n}
\end{figure}
For example, an accepting run for the pair $(4,12)$, starting from the initial state $0$, is given by
$$0\stackrel{\binom{0}{1}}{\longrightarrow}1\stackrel{\binom{1}{1}}{\longrightarrow}0\stackrel{\binom{0}{0}}{\longrightarrow}0\stackrel{\binom{0}{0}}{\longrightarrow}0.$$

From the classical theory of regular sequences, we can also obtain a linear representation for $(N(m))_{m\ge 0}$:
$$\lambda=
\begin{pmatrix}
  1&0&0\\
\end{pmatrix},\, 
\mu(0)=
\begin{pmatrix}
  2&0&0\\
  0&0&1\\
  4&0&1\\
\end{pmatrix}, \,
\mu(1)=
\begin{pmatrix}
  0&1&0\\
  4/3&2&-1/3\\
  -4&4&1\\
\end{pmatrix}, \,
\nu=
\begin{pmatrix}
  0\\ 3\\ 3\\
\end{pmatrix}.$$
This means that $N(m)$ can be computed as $\lambda\cdot \mu(\rep_2(m)^R)\cdot \nu$ where $\mu$ is a morphism from the monoid $\{0,1\}^*$ equipped with concatenation to the monoid $\mathbb{Z}^{3\times 3}$ equipped with multiplication. Matrix multiplications are considered starting with the least significant digit first or, with the reversal of the base-$2$ expansion of $m$. For instance, $\rep_2(4)=100$ and $\lambda\cdot \mu(0)\cdot \mu(0)\cdot \mu(1)\cdot \nu=12=N(4)$.

  \begin{proposition}\label{pro:notreg}
    The sequence $(\mathbf{t}_{p,n})_{n\ge 0}$ is not $p$-regular.
  \end{proposition}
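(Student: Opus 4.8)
The plan is to show that $(\mathbf{t}_{p,n})_{n\ge0}$ grows too fast to be $p$-regular, since any $p$-regular sequence is bounded in absolute value by a polynomial in $n$, namely $|x_n| = O(n^d)$ for some $d$ (this follows from the linear representation: $x_n = \lambda\,\mu(\rep_p(n))\,\nu$, and the entries of $\mu(w)$ over all words $w$ of length $\ell$ grow at most like $C^\ell$, i.e. polynomially in $p^\ell\asymp n$). So it suffices to exhibit an infinite family of indices $n$ along which $\mathbf{t}_{p,n}$ grows faster than any polynomial in $n$. The natural choice is $n = p^k$: by Proposition~\ref{prop:sec1} (or Theorem~\ref{thm:sec1}) with $s=0$, the $p^k$-th row of Pascal's triangle mod $p$ is $1,0,\ldots,0,1$ (length $p^k+1$), so $\mathbf{t}_{p,p^k} = 1 + p^{p^k}$. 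Thus $\mathbf{t}_{p,p^k} \ge p^{p^k}$ while $n = p^k$, and $p^{p^k}$ is not bounded by any polynomial in $p^k$. This contradicts the polynomial growth bound, so the sequence is not $p$-regular.

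First I would recall the growth bound for $p$-regular sequences: if $(x_n)_{n\ge0}$ is $p$-regular with linear representation $(\lambda,\mu,\nu)$ of dimension $D$, then setting $M = \max_{a\in\{0,\ldots,p-1\}}\|\mu(a)\|$ for a submultiplicative matrix norm, we get $\|\mu(\rep_p(n))\| \le M^{|\rep_p(n)|} \le M^{\lceil\log_p(n+1)\rceil}$, hence $|x_n| \le \|\lambda\|\,\|\nu\|\,M^{\lceil\log_p(n+1)\rceil} = O(n^{\log_p M})$. (If one prefers to avoid linear representations, the same bound follows directly from the module-generated definition: write $x_n$ as a fixed $\mathbb{Z}$-linear combination of the generators evaluated at the "low-order part" of $n$, recurse on the number of digits, and the number of terms multiplies by a constant at each level.) Then I would compute $\mathbf{t}_{p,p^k}$ exactly: apply Proposition~\ref{prop:sec1} with $n=p^k$, so $n_k=1$, $s=0$, $\mathbf{t}_{p,0}=1$, giving $\mathbf{t}_{p,p^k}=\sum_{m=0}^{1}\left[\binom{1}{m}\binom{0}{0}\bmod p\right]p^{m p^k} = 1 + p^{p^k}$. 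Finally, comparing: if $(\mathbf{t}_{p,n})_{n\ge0}$ were $p$-regular there would be constants $C,d$ with $\mathbf{t}_{p,n}\le C n^d$ for all $n\ge1$; taking $n=p^k$ yields $p^{p^k} \le C p^{kd}$ for all $k$, i.e. $p^{p^k - kd}\le C$, which fails for $k$ large. Hence $(\mathbf{t}_{p,n})_{n\ge0}$ is not $p$-regular.

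The only genuine content is the polynomial growth bound for $p$-regular sequences; everything else is a short direct computation. This bound is standard (see \cite[Chap.~16]{AS}), so the main "obstacle" is merely deciding how much of it to reprove versus cite. One subtlety worth a remark: the argument shows more, namely that $(\mathbf{t}_{p,n})$ is not even bounded by a polynomial, so it is not $p$-regular over any ring in which $p$ is not a zero divisor and not $\mathbb{Q}$-$p$-regular either; and it contrasts sharply with $(N_p(m))_{m\ge0}$, which is $p$-synchronized (hence $p$-regular) precisely because $N_p(m)=m\oplus_p pm$ has $|\rep_p(N_p(m))|$ linear in $|\rep_p(m)|$, whereas the self-referential extraction $\mathbf{t}_{p,n}=N_p(\mathbf{t}_{p,n-1})$ roughly doubles (more precisely, multiplies by a factor depending on the leading digit) the length of the base-$p$ representation at each step.
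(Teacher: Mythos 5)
Your proof is correct and takes essentially the same route as the paper: both arguments rest on the fact that a $p$-regular sequence grows at most polynomially in $n$, and then exhibit super-polynomial growth of $\mathbf{t}_{p,n}$. The only (inessential) difference is the witness: the paper derives $\mathbf{t}_{p,n}\ge (p+1)^{n/p}$ from the ratio $\mathbf{t}_{p,pn+1}/\mathbf{t}_{p,pn}=p+1$ of Remark~\ref{Fp}, whereas you evaluate $\mathbf{t}_{p,p^k}=1+p^{p^k}$ directly (one could even more simply note $\mathbf{t}_{p,n}\ge p^n$ for all $n$, since $\binom{n}{n}=1$ contributes the term $p^n$).
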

  \begin{proof}
    If a sequence is $p$-regular then its growth rate is in $\mathcal{O}(n^c)$ for some constant $c$. But from \eqref{F3}, $\mathbf{t}_{2,n+4}\ge 9\, \mathbf{t}_{2,n}$ and thus $\mathbf{t}_{2,n}\ge 9^{n/4}$. More generally, for an arbitrary $p\ge 2$, from Remark~\ref{Fp}, $\mathbf{t}_{p,n+2p}\ge (p+1)^2\, \mathbf{t}_{p,n}$ and thus $\mathbf{t}_{p,n}\ge (p+1)^{n/p}$. 
  \end{proof}

  \section{The set $\{N_p(m)\mid m\ge 0\}$}\label{sec:par}
Throughout this section, we let $p\ge 2$ be a prime number. Our goal is to study the set $\{N_p(m)\mid m\ge 0\}$.

\begin{lemma}\label{lem:injective}
The map $m\mapsto N_p(m)$ is injective. 
\end{lemma}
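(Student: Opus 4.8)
The plan is to show that $N_p$ is injective by recovering $m$ from $N_p(m) = m \oplus_p p\,m$ digit by digit, working from the least significant digit upward. Write $\rep_p(m) = m_{k}\cdots m_1 m_0$ with the convention that $m_i = 0$ for $i > k$, and similarly write $\rep_p(p\,m) = m_{k}\cdots m_1 m_0 0$, i.e. the digits of $p\,m$ are those of $m$ shifted up by one position. Since $\oplus_p$ is digitwise addition modulo $p$ without carry, the $i$th digit of $N_p(m)$ is exactly $c_i := (m_i + m_{i-1}) \bmod p$, where $m_{-1} := 0$.

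First I would observe that this gives an explicit triangular system: $c_0 = m_0$, and $c_i = (m_i + m_{i-1}) \bmod p$ for $i \ge 1$. Hence $m_0 = c_0$, and inductively $m_i = (c_i - m_{i-1}) \bmod p$ for each $i \ge 1$. This shows that the digit string of $m$ is uniquely determined by the digit string of $N_p(m)$, so $N_p$ is injective. To make this fully rigorous, I would first note that $m$ and $p\,m$ have aligned digits in the stated way (this is just the definition of multiplication by the base $p$), so that no carries ever interfere and the digitwise formula for $N_p(m)$ is valid; then the recovery procedure above, being deterministic, proves that $N_p(m) = N_p(m')$ forces $m_i = m'_i$ for all $i$, hence $m = m'$.

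Alternatively, and perhaps more cleanly for the write-up, one can argue by contradiction on the number of digits: suppose $N_p(m) = N_p(m')$ with $m \ne m'$, and let $i$ be the smallest index where $m_i \ne m'_i$. Then comparing the $i$th digits of $N_p(m)$ and $N_p(m')$ gives $(m_i + m_{i-1}) \equiv (m'_i + m'_{i-1}) \pmod p$; since $m_{i-1} = m'_{i-1}$ (for $i=0$ both are the implicit $0$), we get $m_i \equiv m'_i \pmod p$, and as both lie in $\{0,\ldots,p-1\}$ this forces $m_i = m'_i$, a contradiction.

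I do not expect a genuine obstacle here; the only thing requiring a little care is the bookkeeping that $\rep_p(p\,m)$ really is $\rep_p(m)$ followed by a single $0$ (including the edge case $m = 0$, where everything is the empty word and $N_p(0) = 0$), and that the absence of carries in $\oplus_p$ is what makes the digitwise system decouple into the simple triangular recurrence. Once that is in place the injectivity is immediate. It is worth noting, for later use in the section, that this argument in fact shows $N_p$ maps onto precisely those integers whose digit string $c_k \cdots c_0$ is such that the recurrence $m_i = (c_i - m_{i-1}) \bmod p$ eventually produces only zeros — which is the combinatorial characterization of the image set $\{N_p(m) \mid m \ge 0\}$ that the section goes on to study.
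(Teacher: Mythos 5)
Your proof is correct and follows essentially the same route as the paper: the paper's argument is exactly your second (contradiction) version, taking the smallest index $k$ with $x_k\neq y_k$ and noting that the $k$th digits $x_k\oplus_p x_{k-1}$ and $y_k\oplus_p y_{k-1}$ of $N_p(x)$ and $N_p(y)$ must then differ. Your primary ``recover $m$ digit by digit'' formulation is the same triangular-system idea, which the paper itself deploys explicitly in the proof of the subsequent Lemma~\ref{lem:uniqueEvil}.
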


\begin{proof}
  Let $x,y$ be such that $\rep_p(x)=x_\ell\cdots x_0$, $\rep_p(y)=y_\ell \cdots y_0$. If the two representations have different lengths, we allow leading zeroes for the shortest one. Assume $x\neq y$. Let $k\ge 0$ be the smallest index such that $x_k\neq y_k$. Then
  $x_k\oplus_p x_{k-1}=x_k\oplus_p y_{k-1}$ differs from $y_k\oplus_p y_{k-1}$, so $N_p(x)\neq N_p(y)$. 
\end{proof}

\subsection{Partitioning $\{N(m) \mid m\ge 1\}$}
We start with the case $p=2$ and we show that $\{N(m)\mid m\ge 0\}$ may be partitioned into sets of numbers obtained by recursively iterating the map $m\mapsto N(m)$ on odious numbers. An {\em evil} (resp. {\em odious}) number is an integer having an even (resp. odd) number of $1$'s in its base-$2$ expansion.

  \begin{lemma}\label{lem:uniqueEvil}
    Let $e$ be an evil number. There is a unique integer $m$ such that $N(m)=e$.
  \end{lemma}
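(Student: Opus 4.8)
The plan is to exploit the map $N$ directly via its action on base-$2$ expansions, together with the fact already proved in Lemma~\ref{lem:injective} that $N$ is injective. Concretely, given an evil number $e$ with $\rep_2(e)=e_\ell\cdots e_0$, I want to reconstruct the unique $m$ with $N(m)=e$. Writing $\rep_2(m)=m_\ell\cdots m_0$ (allowing leading zeroes, and noting $m$ will have at most one more digit than $e$), the relation $N(m)=m\oplus 2m$ reads digit-wise as $e_j=m_j\oplus m_{j-1}$ for all $j$, with the convention $m_{-1}=0$. This is a triangular system: it forces $m_0=e_0$, then $m_1=e_1\oplus m_0$, and in general $m_j=e_j\oplus m_{j-1}=e_j\oplus e_{j-1}\oplus\cdots\oplus e_0$. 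So the candidate is $m=\sum_{j\ge 0}\bigl(\bigoplus_{i\le j}e_i\bigr)2^j$, the ``partial-sum'' (prefix Nim-sum) of the digits of $e$; this is exactly the classical inverse Gray code map applied to $e$.

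The key steps, in order, are: (1) define $m$ by the prefix-XOR formula above; (2) check that this $m$ is a genuine non-negative integer with finitely many nonzero digits — here is where evilness enters, since the most significant ``digit'' $m_\ell$ beyond the top of $e$ equals $\bigoplus_{i=0}^\ell e_i$, which is the sum-of-digits of $e$ modulo $2$, hence $0$ precisely because $e$ is evil, so no runaway leading digit appears and $m$ is well-defined; (3) verify $N(m)=e$ by substituting back: $m_j\oplus m_{j-1}=\bigl(\bigoplus_{i\le j}e_i\bigr)\oplus\bigl(\bigoplus_{i\le j-1}e_i\bigr)=e_j$, so $m\oplus 2m=e$; (4) conclude uniqueness either by appealing to Lemma~\ref{lem:injective} (an injective map has at most one preimage) or by observing that the triangular system above has no freedom.

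The only genuinely delicate point — and the one I would present carefully rather than wave at — is step (2), the bookkeeping at the top digit: one must be sure that when $e$ has $\ell+1$ digits, the digit $m_{\ell+1}$ obtained from the recurrence is $0$, i.e.\ that $e$ being evil is exactly the condition that prevents $m$ from having more than $\ell+1$ digits; and conversely this explains why odious numbers are \emph{not} in the image of $N$. Everything else is a routine digit-wise verification. A clean way to package the whole argument is to recognize the formula for $m$ as the inverse of the binary-reflected Gray code, but for a self-contained proof the three-line prefix-XOR computation above suffices; I would write it out as such, invoking Lemma~\ref{lem:injective} only for the uniqueness half.
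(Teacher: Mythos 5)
Your proof is correct and follows essentially the same route as the paper's: both solve the digit-wise triangular system $e_j=m_j\oplus m_{j-1}$ from the least significant digit up, use evilness of $e$ exactly to ensure the top digit $m_\ell=\bigoplus_{i=0}^{\ell}e_i$ vanishes so that $m$ is a genuine integer, and invoke Lemma~\ref{lem:injective} for uniqueness. Your explicit prefix-XOR (inverse Gray code) formula is just a closed form of the block-alternation description in the paper; only watch the minor indexing slips ($m$ has one \emph{fewer} digit than $e$, and the digit to check is $m_\ell$, not $m_{\ell+1}$), which do not affect the argument.
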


  \begin{proof}
        Let $e$ be an evil number, and write $\rep_2(e)=e_\ell\cdots e_0$ with $e_\ell=1$. We show that there exists $m$ such that $\rep_2(m)=m_{\ell-1}\cdots m_0$ with $m_{\ell-1}=1$ and $N(m)=e$. If such an integer $m$ exists, then it must satisfy $m_0=e_0$, $m_{\ell-1}=e_\ell=1$ and we find $m_i=e_i\oplus m_{i-1}$ for $i=1,\ldots,\ell$ (if, for convenience, we set $m_\ell=0$). Otherwise stated, $\rep_2(m)$ is made of blocks of $0$'s or $1$'s. If $e_i=1$, then $m_i=1-m_{i-1}$, so these two kinds of blocks alternate each time we encounter a letter $1$ in the base-$2$ expansion of $e$. Starting from the least significant digit, the rightmost block in $\rep_2(m)$ is made of letters $e_0$. Since $e$ is evil, with $e_\ell=1$, we indeed get $m_\ell=0$ (we thus have a solution to the system of equations). Uniqueness follows from the previous lemma.
  \end{proof}
  
  \begin{example}
  In the proof of the previous lemma, we start with an evil number.
  Let us take $e=43$ with $\rep_2(n)=101011$.
  Then we would like to find the solution to the equation $N(m)=e$, which is represented in the following table.
  \[
  \begin{array}{c|ccccccc}
  \rep_2(m) & & 0 & m_4 & m_3 & m_2 & m_1 & m_0 \\
  \rep_2(2m) & \oplus & m_4 & m_3 & m_2 & m_1 & m_0 & 0 \\
  \hline
  \rep_2(e) & & 1 & 0 & 1 & 0 & 1 & 1
  \end{array}
  \]
  Starting from the right of the table, we get $m_0=1$.
  We may update the table as follows.
    \[
  \begin{array}{c|ccccccc}
  \rep_2(m) & & 0 & m_4 & m_3 & m_2 & m_1 & 1 \\
  \rep_2(2m) & \oplus & m_4 & m_3 & m_2 & m_1 & 1 & 0 \\
  \hline
  \rep_2(e) & & 1 & 0 & 1 & 0 & 1 & 1
  \end{array}
  \]
  Examining the second column on the right, we get $m_1=0$.
  Pursuing like this for the other columns, we obtain $\rep_2(m)=11001$.
   \end{example}
  
  \begin{lemma}\label{lem:3}
    The set $\{N(m)\mid m\ge 0\}=\{0,3,5,6,9,10,\ldots\}$ is exactly the set of evil numbers. In particular, the sequence $(N(m))_{m\ge 1}$ is a permutation of the increasing sequence {\tt A001969} of evil numbers.
  \end{lemma}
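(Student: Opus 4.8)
The plan is to glue together the two lemmas just established with one short parity computation. Introduce, for the course of this proof, the notation $s_2(n)$ for the number of $1$'s in the base-$2$ expansion of $n$; recall that $n$ is evil precisely when $s_2(n)$ is even. The first step is to check that $N(m)$ is always evil. The key elementary fact is that for any non-negative integers $a,b$,
$$s_2(a\oplus b)\equiv s_2(a)+s_2(b)\pmod 2,$$
which follows bit by bit: in each position $i$ one has $a_i\oplus b_i\equiv a_i+b_i\pmod 2$ (the two differ by the even number $2a_ib_i$), and summing over all positions gives the claimed congruence. Applying this with $a=m$ and $b=2m$, and observing that $\rep_2(2m)=\rep_2(m)\,0$ so that $s_2(2m)=s_2(m)$, we obtain $s_2(N(m))=s_2(m\oplus 2m)\equiv 2\,s_2(m)\equiv 0\pmod 2$. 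Hence $\{N(m)\mid m\ge 0\}\subseteq\{\text{evil numbers}\}$.

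For the reverse inclusion I would simply invoke the existence half of Lemma~\ref{lem:uniqueEvil}: given any evil number $e$, that lemma constructs an integer $m$ with $N(m)=e$. This yields $\{N(m)\mid m\ge 0\}\supseteq\{\text{evil numbers}\}$, and combining the two inclusions proves the set equality $\{N(m)\mid m\ge 0\}=\{0,3,5,6,9,10,\ldots\}$, the set of evil numbers.

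It remains to deduce the permutation statement. By Lemma~\ref{lem:injective} the map $m\mapsto N(m)$ is injective, so together with the set equality it is a bijection from $\mathbb{N}$ onto the set of evil numbers. Since $N(0)=0$ and $N$ is injective, $N(m)\neq 0$ for every $m\ge 1$, so $\{N(m)\mid m\ge 1\}$ is exactly the set of positive evil numbers, each attained once; listing the evil numbers {\tt A001969} in increasing order from its first positive term onwards, this says precisely that $(N(m))_{m\ge 1}$ is a rearrangement of that sequence.

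I do not expect a genuine obstacle: the only new ingredient is the parity identity for $s_2(a\oplus b)$, and everything else is bookkeeping assembling Lemmas~\ref{lem:injective} and~\ref{lem:uniqueEvil}. The single point that deserves a line of care is confirming that the existence half of Lemma~\ref{lem:uniqueEvil} really covers \emph{every} evil number, including the degenerate case $e=0$ (handled by $m=0$); but its proof already treats a general evil $e$, so nothing further is required.
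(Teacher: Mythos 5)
Your proof is correct, and the forward inclusion is argued by a genuinely different route. The paper proves that every $N(m)$ is evil by induction on $m$ via the recursions~\eqref{eq:recN4} (with a small separate argument for the case $N(4m+3)=2N(2m+1)-1$, using that $N(2m+1)$ is odd), thereby reusing machinery already set up for the $2$-regularity discussion. You instead use the one-line parity identity $s_2(a\oplus b)\equiv s_2(a)+s_2(b)\pmod 2$ together with $s_2(2m)=s_2(m)$, which gives $s_2(N(m))\equiv 2s_2(m)\equiv 0\pmod 2$ directly. Your argument is shorter, self-contained, and avoids the induction entirely; the paper's version has the minor virtue of exercising the relations~\eqref{eq:recN4} that it has just derived. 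One caveat if you were tempted to push your method further: the digit-sum congruence does not transport naively to the $p>2$ setting of the paper, where membership in $E_p=\{N_p(m)\mid m\ge 0\}$ is governed by the \emph{alternating} sum of digits modulo~$p$ (Lemma~\ref{lem: charact-Np}), not the plain digit sum. The reverse inclusion (via Lemma~\ref{lem:uniqueEvil}) and the deduction of the permutation statement from Lemma~\ref{lem:injective} match the paper, and your remark about the degenerate case $e=0$, which the proof of Lemma~\ref{lem:uniqueEvil} implicitly excludes by assuming a leading digit $e_\ell=1$, is a legitimate point of care correctly resolved by $N(0)=0$.
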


  \begin{proof}
    We first show by induction on $m$ that $N(m)$ is evil. This is readily checked for the first few values of $m$. We make use of \eqref{eq:recN4}: $\rep_2(N(4m))=\rep_2(N(m))00$, $\rep_2(N(4m+1))=\rep_2(N(m))11$ and $\rep_2(N(4m+2))=\rep_2(N(2m+1))0$. By induction hypothesis, $\rep_2(N(m))$ and $\rep_2(N(2m+1))$ are evil, so are $N(4m+r)$ for $r=0,1,2$. Observe that $N(2m+1)$ is odd by definition of the Nim-sum. Thus $\rep_2(N(2m+1))$ is of the form $u1$ for some binary word $u$. Hence, $\rep_2(2N(2m+1)-1)=u01$ has the same number of letters $1$ as $\rep_2(N(2m+1))$. By induction hypothesis, $N(2m+1)$ is evil and thus $N(4m+3)$ is also evil.

    Conversely, every evil number belongs to the set as a consequence of the previous lemma. 
  \end{proof}
  
Since $N(m)>m$ for all $m\ge 1$, we can extract subsequences in a recursive way similar to \eqref{eq:ext}. Let $i\ge 1$ be an integer. We let $({\tt sub}(i,m))_{m\ge 0}$ be the sequence $(x_m)_{m\ge 0}$ defined by $x_0=i$ and $x_{m+1}=N(x_m)$. In other words we consider the sequence $(N^m(i))_{m\ge 0}$ of iterations of $N$ on $i$. We have seen in particular that $\mathbf{t}_{2,m}={\tt sub}(1,m)$ for all $m\ge 1$ (recall, for instance, Table~\ref{tab:Nm}).

We claim the following.
\begin{theorem}
We have the partition
$$\{N(m)\mid m\ge 1\} = \bigcup_{i\in \mathcal{O}} \{ {\tt sub}(i,m) \mid m\ge 1\}$$
where the sets in the above union are pairwise disjoint and $\mathcal{O}=\{1,2,4,7,8,11,\ldots\}$ is the set of odious numbers.
\end{theorem}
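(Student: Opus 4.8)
The plan is to read the statement as a description of the functional graph of $N$ acting on the evil numbers. First I would assemble the facts already at hand: $N(m)>m$ for every $m\ge 1$; by Lemma~\ref{lem:3} the image $\{N(m)\mid m\ge 0\}$ is exactly the set $\mathcal E$ of evil numbers; by Lemma~\ref{lem:uniqueEvil} together with injectivity (Lemma~\ref{lem:injective}) every evil number has a \emph{unique} $N$-preimage; and since the most significant bit of $2m$ survives in $m\oplus 2m$, one has $N(m)=0$ only for $m=0$, so that $\{N(m)\mid m\ge 1\}=\mathcal E\setminus\{0\}$. The observation to isolate is that, by Lemma~\ref{lem:3}, an integer lies in the image of $N$ if and only if it is evil, equivalently the odious numbers are precisely the integers possessing no $N$-preimage.

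Next I would run, for a fixed evil number $e\ge 3$, the backward iteration. Set $e_0=e$ and, as long as $e_j$ is positive and evil, let $e_{j+1}$ be the unique integer with $N(e_{j+1})=e_j$; this $e_{j+1}$ is positive because the only number mapping to $0$ is $0$ itself, and it satisfies $e_{j+1}<e_j$ since $N$ is strictly increasing on the positive integers. Hence $e_0>e_1>\cdots$ is a strictly decreasing sequence of positive integers, so it is finite and stops at some $e_J$ with no $N$-preimage, i.e.\ at an odious number $i:=e_J\in\mathcal O$. Then $e=N^J(i)={\tt sub}(i,J)$ with $J\ge 1$, which gives the inclusion $\mathcal E\setminus\{0\}\subseteq\bigcup_{i\in\mathcal O}\{{\tt sub}(i,m)\mid m\ge 1\}$; the reverse inclusion is immediate since every ${\tt sub}(i,m)=N^m(i)$ with $m\ge 1$ is a nonzero value of $N$, hence an element of $\mathcal E\setminus\{0\}$.

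For pairwise disjointness, suppose ${\tt sub}(i,m)={\tt sub}(i',m')$ with $i,i'\in\mathcal O$ and $m,m'\ge 1$, say $m\le m'$. Cancelling $m$ applications of $N$ via injectivity yields $i=N^{m'-m}(i')$; if $m'>m$ this would exhibit $i$ as a value of $N$, hence evil, contradicting $i\in\mathcal O$, so $m'=m$ and then $i=i'$. Combined with the two inclusions above, this establishes the claimed partition.

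The hard part is only bookkeeping: one must check that the backward orbit is well defined at every step (which is exactly ``every positive evil number has a positive preimage'') and that it terminates at an odious number rather than running off to $0$ or halting prematurely --- both being consequences of $N(m)>m$ and of Lemma~\ref{lem:3}. No ingredient beyond the lemmas already proved is needed.
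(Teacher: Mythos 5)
Your proof is correct and follows essentially the same route as the paper: disjointness via injectivity of $N$ plus the evil/odious dichotomy, and coverage via backward iteration of $N$ from an evil number, terminating at an odious number because preimages strictly decrease. One wording slip: $N$ is not strictly increasing as a function of $m$ (e.g.\ $N(2)=6>5=N(3)$); the inequality $e_{j+1}<e_j$ you need follows instead from $N(m)>m$ applied to $m=e_{j+1}$, a fact you already stated, so this does not affect the argument.
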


$$\begin{array}{c|ccccccccc}
    & 1&2&3&4&5&6&7&8& \\
    \hline
 {\tt sub}(1,m) & 3 & 5 & 15 & 17 & 51 & 85 & 255 & 257 & \cdots\\
 {\tt sub}(2,m) & 6 & 10 & 30 & 34 & 102 & 170 & 510 & 514 \\
 {\tt sub}(4,m) & 12 & 20 & 60 & 68 & 204 & 340 & 1020 & 1028 \\
 {\tt sub}(7,m) & 9 & 27 & 45 & 119 & 153 & 427 & 765 & 1799 \\
 {\tt sub}(8,m) & 24 & 40 & 120 & 136 & 408 & 680 & 2040 & 2056 \\
    {\tt sub}(11,m) & 29 & 39 & 105 & 187 & 461 & 599 & 1785 & 2827 \\
    \vdots & \\
  \end{array}$$

  With the same reasoning as in the proof of Proposition~\ref{pro:notreg} from \eqref{F3}, none of these sequences is $2$-regular.

  \begin{proof}
    Let $i\neq j$ be odious numbers. The sets $\{ {\tt sub}(i,m) \mid m\ge 1\}$ and $\{ {\tt sub}(j,m) \mid m\ge 1\}$ are disjoint. Proceed by contradiction and assume that there exist integers $m,n\ge 1$ such that $N^m(i)=N^n(j)$. Without loss of generality, assume $m\ge n$. From Lemma~\ref{lem:injective}, $N^{m-n}(i)=j$. If $m=n$, we get $i=j$, which is a contradiction. If $m>n$, then $N^{m-n}(i)$ is evil but $j$ is odious, which is again a contradiction.

    We still have to show that for every evil number $n$, there exists some integer $i$ such that $n$ belongs to $\{ {\tt sub}(i,m) \mid m\ge 1\}$. If a number $e$ is evil, then find $N^{-1}(e)$ and repeat this procedure while the result is evil and positive. Since $N^{-1}(e)<e$, this procedure stops when we reach an odious number $i$ meaning that $e$ belongs to $\{ {\tt sub}(i,m) \mid m\ge 1\}$.

    Finally, by Lemma~\ref{lem:3} every evil number appears in the set, from the first part of the proof, the partition must thus runs over all odious numbers.
  \end{proof}
  
\subsection{A known permutation}\label{sec:permut}  Since $(N(m))_{m\ge 1}$ is a permutation of the sequence {\tt A001969} of the evil numbers, it is natural to consider the sequence $\alpha$ mapping $m\ge 1$ to the position of $N(m)$ within the ordered sequence of evil numbers. The first few terms of this permutation $\alpha$ of $\mathbb{N}_{>0}$ are
  $$1, 3, 2, 6, 7, 5, 4, 12, 13, 15, 14, 10, 11, 9, 8,$$
  $$24, 25, 27, 26, 30, 31, 29, 28, 20, 21, 23, 22, 18, 19, 17, 16,\ldots.$$
  Otherwise stated, if $e(m)$ is the $m$th evil number, the first evil numbers being $e(0)=0$ and $e(1)=3$, then
  \begin{equation}
    \label{eq:alpha}
    e(\alpha(m))=N(m).
  \end{equation}
  This sequence appears as {\tt A003188} in \cite{Sloane} and is described as an integer equivalent of the Gray code for $n$ considered as a base-$2$ expansion (Gray code provides a way to enumerate integers by only changing one digit in their base-$2$ expansion from one element to the next one). As observed by Paul D. Hanna (we refer again to \cite{Sloane}), it is known that 
 \begin{equation}
    \label{eq:alphadef}
    \alpha(m)=m\oplus \lfloor m/2\rfloor
    \end{equation}  
    for all $m\ge 1$. In the following we define $\alpha$ through this relation. In particular, $N(m)$ is roughly twice $\alpha(m)$. One can easily deduce that the map $\alpha$ restricted to $[2^n,2^{n+1}[$ is again a one-to-one correspondence mapping $[2^n,2^n+2^{n-1}[$ to $[2^n+2^{n-1},2^{n+1}[$ and $[2^n+2^{n-1},2^{n+1}[$ to $[2^n,2^n+2^{n-1}[$, as shown in Figure~\ref{fig:alpha}.
  \begin{figure}[h!tb]
    \centering
    \includegraphics[height=4.5cm]{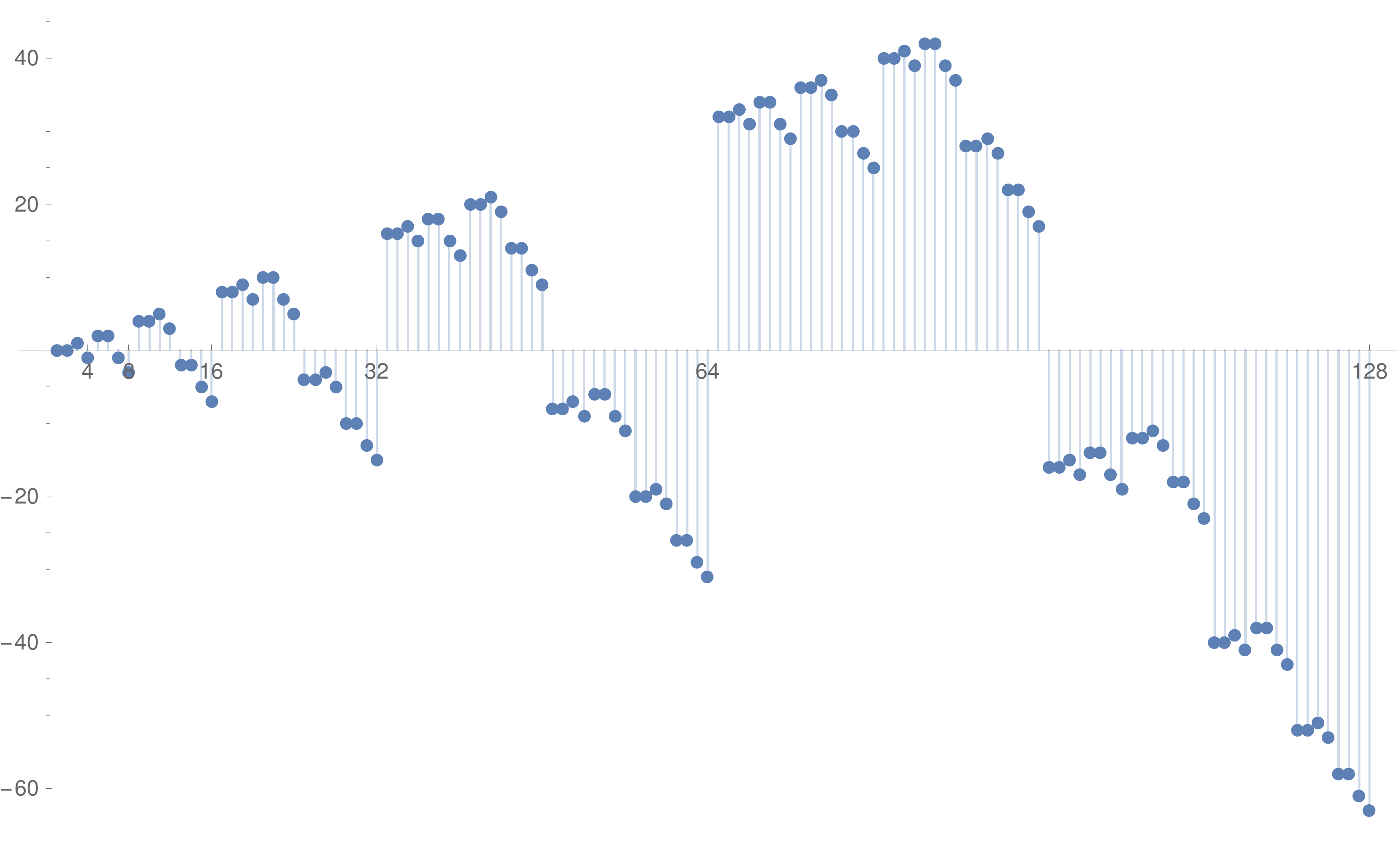}
    \caption{The graph of $n\mapsto \alpha(n)-n$ over $[0,127[$.}
    \label{fig:alpha}
  \end{figure}
  From Hanna's remark~\eqref{eq:alphadef}, we have
  \begin{equation}
    \label{eq:alpha2}
  \left\{\begin{array}{rcl}
    \alpha(4n)&=&2\alpha(2n)\\
             \alpha(4n+1)&=&2\alpha(2n)+1\\
             \alpha(4n+2)&=&2\alpha(2n+1)+1\\
             \alpha(4n+3)&=&2\alpha(2n+1).
         \end{array}\right.
     \end{equation}
      
To get $e(n)$, notice that one simply writes down $\rep_2(n)$ and appends an extra digit, either $0$ or $1$ to get an evil number. This is rather straightforward: indeed $\rep_2(n)$ ranges over all the words in $1\{0,1\}^*$, and appending the convenient digit, we get all the evil numbers (and the order is preserved). 
 If $\rep_2(\alpha(m))=a_\ell \cdots a_1$, then relation~\eqref{eq:alphadef} yields $\rep_2(N(m))=\rep_2(2m\oplus m)=a_\ell\cdots a_1 a_0$ where $a_0$ is the least significant digit of $m$. Moreover it is the only evil number having $a_\ell \cdots a_1$ as length-$\ell$ prefix. From these observations, we get \eqref{eq:alpha} that can be expressed by   
  $$\rep_2(N(m))=\left\{
    \begin{array}{ll}
      \rep_2(\alpha(m))0, & \text{if }\alpha(m)\text{ is evil};\\
      \rep_2(\alpha(m))1, & \text{if }\alpha(m)\text{ is odious}\\
    \end{array}\right.
  $$
  (recall that $N(m)$ is evil).

  \subsection{A generalization of the Thue--Morse sequence}

  In this section, we show that the set $E_p=\{N_p(m)\mid m\ge 0\}$, generalizing the set of evil numbers for $p>2$, is $p$-automatic.
  Recall that a set $S$ of non-negative integers is said to be $p$-automatic if its characteristic sequence
  \[
  \chi_S(n) 
  = 
  \begin{cases}
  1 & \text{if $n\in S$}\\
  0 & \text{otherwise}
  \end{cases}
  \]
  is itself $p$-automatic~\cite{AS}. For more details about automaticity, we refer the reader to \cite{AS} or \cite{BHMV}.

  \begin{lemma}\label{lem: charact-Np}
    Let $e$ be an integer and write $\rep_p(e)=e_{k+1}\cdots e_0$. There exists an integer $m$ such that $N_p(m)=e$ if and only if $\sum_{i=0}^{k+1}(-1)^ie_i= 0\pmod{p}$. When such an integer $m$ exists, it is unique.
  \end{lemma}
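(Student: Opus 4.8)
The plan is to convert the equation $N_p(m)=e$ into a system of one congruence per digit and solve it by back-substitution. Write $\rep_p(m)=m_k\cdots m_0$, padding with leading zeros if necessary, and adopt the boundary conventions $m_{-1}=0$ and $m_{k+1}=0$. Since $\rep_p(pm)=m_k\cdots m_0\,0$, the definition of $\oplus_p$ (digit-wise mod $p$, no carry) shows that the $i$-th digit of $N_p(m)$ is $(m_i+m_{i-1})\bmod p$ for $0\le i\le k+1$. Hence $N_p(m)=e$ is equivalent to the system $e_i\equiv m_i+m_{i-1}\pmod p$ for $i=0,\dots,k+1$, subject to $m_{-1}=m_{k+1}=0$. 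The key point to handle carefully is the bookkeeping of this padding: $N_p(m)$ is always exactly one digit longer than $m$, so the top digit $e_{k+1}$ must correspond to $m_k$ (equivalently, to the condition $m_{k+1}=0$) and not to a genuine digit of $m$; I would state the conventions up front and carry them through, as this is the only spot where an off-by-one could slip in.

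The congruence at $i=0$ forces $m_0=e_0$, and then $m_i\equiv e_i-m_{i-1}\pmod p$ determines each $m_i\in\{0,\dots,p-1\}$ uniquely for $i=1,\dots,k$, which already gives the uniqueness claim; an easy induction yields the closed form $m_i\equiv(-1)^i\sum_{j=0}^i(-1)^j e_j\pmod p$. It remains to check compatibility with the top boundary condition $m_{k+1}=e_{k+1}-m_k\equiv 0$: plugging in the closed form for $m_k$ shows that $m_{k+1}\equiv 0$ holds exactly when $(-1)^{k+1}e_{k+1}+\sum_{j=0}^{k}(-1)^j e_j\equiv 0$, i.e.\ when $\sum_{i=0}^{k+1}(-1)^i e_i\equiv 0\pmod p$. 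Thus when the alternating-sum condition holds, the integer $m=\sum_{i=0}^k m_i p^i$ built this way satisfies $N_p(m)=e$ by construction (each digit matches), and conversely any $m$ with $N_p(m)=e$ must satisfy all $k+2$ congruences, in particular the top one, forcing the alternating-sum condition. This establishes both the equivalence and uniqueness.

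As a shortcut for the "only if" direction one can instead argue conceptually: the alternating digit sum $S(x)=\sum_i(-1)^i x_i$ is additive modulo $p$ with respect to $\oplus_p$ because it is linear in the digits, and $S(px)\equiv -S(x)\pmod p$ since multiplication by $p$ shifts every digit up one position; hence $S(e)=S(N_p(m))\equiv S(m)+S(pm)\equiv S(m)-S(m)\equiv 0\pmod p$. I expect no real obstacle here: the computations are routine, and the only thing demanding attention is the padding/boundary bookkeeping described above. I would therefore present the explicit back-substitution (which delivers existence, uniqueness, and the exact characterization in one go), optionally mentioning the $S$-map argument as a quick sanity check on necessity.
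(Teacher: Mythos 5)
Your proposal is correct and follows essentially the same route as the paper: both reduce $N_p(m)=e$ to the digit-wise linear system $m_i+m_{i-1}\equiv e_i \pmod p$ with the boundary conditions $m_{-1}=m_{k+1}=0$. The only difference is presentational — you solve the system by explicit back-substitution (which yields existence, the alternating-sum criterion, and uniqueness in one pass), whereas the paper invokes the rank of the bidiagonal matrix and a vanishing-determinant criterion for the augmented system, deferring uniqueness to the injectivity lemma.
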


  \begin{proof}
    Let $\rep_p(m)=m_k\cdots m_0$. As in the proof of Lemma~\ref{lem:uniqueEvil}, we have to consider the following linear system over $\mathbb{Z}_p$
    $$
    \begin{pmatrix}
      1&0&0&\cdots &0\\
      1&1&0&       &0\\
      0&1&1&       &0\\
       \vdots& &\ddots& \ddots  &\vdots \\ 
       &&&           1&1\\
       0&&\cdots&0&1
    \end{pmatrix}
    \begin{pmatrix}
      m_0\\
      \vdots\\
      m_k\\
    \end{pmatrix}=\begin{pmatrix}
      e_0\\
      \vdots\\
      e_{k+1}\\
    \end{pmatrix}.
    $$
    The $(k+2)\times(k+1)$ matrix has rank $k+1$. The system has a solution if and only the determinant
    $$
    \begin{pmatrix}
      1&0&0&\cdots &0&e_0\\
      1&1&0&       &0&e_1\\
      0&1&1&       &0&e_2\\
       \vdots& &\ddots& \ddots  &\vdots \\ 
       &&&           1&1&e_k\\
       0&&\cdots&0&1&e_{k+1}\\
     \end{pmatrix}$$
     is zero in $\mathbb{Z}_p$. Uniqueness follows from Lemma~\ref{lem:injective}.
  \end{proof}

A classical generalization of the Thue--Morse sequence to a $p$-letter alphabet is to consider the fixed point starting with $0$ of the morphism over $\{0,\ldots,p-1\}$ defined by $i\mapsto i (i+1)\cdots (p-1)\, 0\cdots (i-1)$. The $n$th symbol occurring in the fixed point is equal to the sum-of-digits modulo~$p$ of $n$ written in base~$p$. See, for instance, \cite{ubi} and the references therein.
  
  \begin{proposition}
  Let $\varphi$ be the $p$-uniform morphism over $\{0,\ldots,p-1\}$ defined by $\varphi(0) = 0\, (p-1)\, (p-2)\cdots 1$ and $\varphi(j)= (p-j)\, (p-j-1) \cdots 0\, (p-1)\, (p-2) \cdots (p-j+1)$ for all $j\in\{1,\ldots,p-1\}$, and let $\tau$ be the coding over $\{0,\ldots,p-1\}$ defined by $\tau(0)=1$ and $\tau(j)=0$ for all $j>1$.
  Then the set $E_p=\{N_p(m)\mid m\ge 0\}$ is $p$-automatic, i.e., its characteristic sequence is the image, under the coding  $\tau$, of the fixed point of the morphism $\varphi$.
  \end{proposition}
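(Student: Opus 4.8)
The plan is to combine Lemma~\ref{lem: charact-Np} with the well-known fact that the $n$th letter of the fixed point of $\varphi$ equals the \emph{alternating} sum-of-digits of $n$ modulo $p$, so that composing with the coding $\tau$ produces exactly the characteristic sequence of $E_p$. Concretely, by Lemma~\ref{lem: charact-Np}, an integer $n$ with $\rep_p(n)=n_\ell\cdots n_0$ lies in $E_p$ if and only if $\sum_{i=0}^{\ell}(-1)^i n_i\equiv 0 \pmod p$. (One should first note that this condition is independent of padding with leading zeroes: prepending a zero at position $\ell+1$ adds $(-1)^{\ell+1}\cdot 0=0$, so the value of the alternating digit sum mod $p$ is well defined on $n$ regardless of representation length.) Thus it suffices to prove that the fixed point $\mathbf{u}$ of $\varphi$ satisfies $\mathbf{u}_n = \bigl(\sum_{i\ge 0}(-1)^i n_i\bigr)\bmod p$ for all $n$, because then $\tau(\mathbf{u}_n)=1$ exactly when this alternating sum is $0$ mod $p$, i.e.\ exactly when $n\in E_p$, which is the claim.

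The key step is therefore the identification of $\mathbf{u}_n$. I would argue by induction on the length of $\rep_p(n)$, exploiting the $p$-uniformity of $\varphi$. Write $n = pq + r$ with $0\le r<p$, so that $\rep_p(n)=\rep_p(q)\, r$ and the alternating digit sum of $n$ is $r - (\text{alternating digit sum of }q)$ read with the correct sign; more precisely, if $s(q)=\sum_i(-1)^i q_i \bmod p$ denotes the alternating sum of $q$ (least significant digit first), then $s(n) = (r - s(q))\bmod p$, since shifting all digits of $q$ up by one position flips every sign. On the morphism side, $\mathbf{u}_n$ is the $r$th letter of $\varphi(\mathbf{u}_q)$, and by the induction hypothesis $\mathbf{u}_q = s(q)$. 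So I must check that the $r$th letter of $\varphi(j)$ equals $(r-j)\bmod p$ for every $j,r\in\{0,\dots,p-1\}$. Reading off the definition: $\varphi(0)=0\,(p-1)\,(p-2)\cdots 1$ has $r$th letter $(-r)\bmod p=(r-0)\bmod p$; and for $j\ge 1$, $\varphi(j)=(p-j)\,(p-j-1)\cdots 0\,(p-1)\cdots(p-j+1)$ is precisely the word whose $r$th letter is $(p-j-r)\bmod p = (r-j)\bmod p$ (the first $p-j$ letters count down from $p-j$ to $0$, then the remaining $j$ letters count down from $p-1$ to $p-j+1$, which is the cyclic continuation). This verifies the recurrence, and the base case $n=0$ is immediate since $\mathbf{u}_0=0=s(0)$.

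Having established $\mathbf{u}_n=s(n)$, the conclusion is formal: the characteristic sequence of $E_p$ is $n\mapsto \tau(\mathbf{u}_n)$, which by definition is $p$-automatic (the image of a fixed point of a $p$-uniform morphism under a coding), and equals $\chi_{E_p}$ by Lemma~\ref{lem: charact-Np}. I expect the main obstacle to be purely bookkeeping: getting the sign conventions consistent between the ``least significant digit first'' reading used in the morphism iteration and the alternating sum as written in Lemma~\ref{lem: charact-Np} (which reads $e_{k+1}\cdots e_0$), and checking the two cases of $\varphi(j)$ against the formula $(r-j)\bmod p$. There is no deep difficulty once those conventions are pinned down; the structural content is entirely contained in Lemma~\ref{lem: charact-Np} and the standard description of generalized Thue--Morse fixed points.
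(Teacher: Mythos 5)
Your route is genuinely different from the paper's: the paper builds a $2p$-state automaton tracking the signed partial alternating sum of the digits, minimizes it by merging the Nerode-equivalent states $(i,+)$ and $(p-i\bmod p,-)$, and then invokes Cobham's construction to read off $\varphi$, whereas you verify directly, by induction on the length of $\rep_p(n)$, that the $n$th letter of the fixed point computes the alternating digit sum of $n$ modulo $p$, and then apply Lemma~\ref{lem: charact-Np}. Your approach is more self-contained (no appeal to minimization or to the automaton-to-morphism construction) and is arguably the cleaner way to present the result; both arguments rest on the same Lemma~\ref{lem: charact-Np}.

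However, your key verification contains a sign error that must be repaired. The $r$th letter (indexing from $0$) of $\varphi(j)$ is $(p-j-r)\bmod p=(-j-r)\bmod p$, \emph{not} $(r-j)\bmod p$ as you assert; already for $\varphi(0)$ your identity $(-r)\bmod p=(r-0)\bmod p$ fails for every odd prime and $0<r<p$ (it only happens to hold when $p=2$). Consequently the recurrence satisfied by the fixed point $\mathbf{u}$ is $\mathbf{u}_{pq+r}=(-\mathbf{u}_q-r)\bmod p$, and the correct closed form is $\mathbf{u}_n\equiv -\sum_i(-1)^i n_i\equiv\sum_i(-1)^{i+1}n_i\pmod p$, i.e.\ the \emph{negative} of the alternating sum you claim. (Check with $p=3$: $\varphi(0)=0\,2\,1$, so $\mathbf{u}_1=2\equiv-1$, while your formula predicts $1$.) The final conclusion survives, because $\tau$ only tests whether a letter equals $0$ and $-S\equiv 0$ iff $S\equiv 0$, so $\tau(\mathbf{u}_n)=1$ iff $n\in E_p$ by Lemma~\ref{lem: charact-Np} exactly as you intend; but the induction as you wrote it would not close, so you should state and prove the invariant with the corrected sign before drawing that conclusion.
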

\begin{proof}
Consider a DFA with $2p$ states of the form $(i,+)$ or $(i,-)$ with $i\in\{0,\ldots,p-1\}$. The transitions between states are given by
    $$(i,+)\stackrel{d}{\longrightarrow} (i+d\bmod{p},-)$$
    and
    $$(i,-)\stackrel{d}{\longrightarrow} (i-d\bmod{p},+)$$
    for all digits $d\in\{0,\ldots,p-1\}$.
    The initial state is $(0,+)$ and the final states are $(0,+)$ and $(0,-)$. This DFA accepts words (i.e., finite sequences of digits) whose alternating sum equals $0$ modulo~$p$. We can minimize this DFA. For $0\le i<p$, the states $(i,+)$ and $(p-i\bmod{p},-)$ are Nerode equivalent, i.e., the same sequences are accepted from both states. Indeed, reading $d_0\cdots d_k$ from $(i,+)$ leads to a state whose first component is $i+d_0-d_1+\cdots +(-1)^kd_k=0$ modulo~$p$. Reading the same word $d_0\cdots d_k$ but from $(p-i,-)$ leads to $p-i-d_0+d_1+\cdots-(-1)^kd_k$, which is also equal to $0$ modulo~$p$. After merging states, the minimal automaton has $p$ states of the form $[(i,+),(p-i\bmod{p},-)]$ for $0\le i< p$ and transitions
    $$[(i,+),(p-i\bmod{p},-)]\stackrel{d}{\longrightarrow} [(p-i-d\bmod{p},+),(i+d\bmod{p},-)]$$
        for all digits $d\in\{0,\ldots,p-1\}$. 
If we identify $[(j,+),(p-j\bmod{p},-)]$ with $j$, we get the expected morphism using a classical construction due to Cobham. For instance, see \cite[Theorem 6.3.2]{AS}.

Now, if $\rep_p(n)=n_k\cdots n_0$, observe that reading the word $n_k\cdots n_0$ from the state $(0,+)$ leads to the state $(n_0-n_1+\cdots +(-1)^k n_k, (-1)^{k+1})$.
We conclude the proof by using Lemma~\ref{lem: charact-Np}.
\end{proof} 

We can make the same discussion as in Subsection~\ref{sec:permut}. In an attempt to generalize~\eqref{eq:alpha}, we extend~\eqref{eq:alphadef} by defining $\alpha_p(m):=m\oplus \lfloor m/p\rfloor$ and by letting $E_p(m)$ denote the $m$th element in $E_p$.
It is clear that $\{\alpha_p(m)\mid m\ge 1\}=\mathbb{N}_{>0}$, and thus, $\alpha_p$ is a permutation of $\mathbb{N}_{>0}$. 
For instance, for $p=3$, the first few terms of $\alpha_3$ \cite[{\tt A071770}]{Sloane} are
$$0, 1, 2, 4, 5, 3, 8, 6, 7, 12, 13, 14, 16, 17, 15, $$
$$11, 9, 10, 24, 25, 26, 19, 20, 18, 23, 21, 22, 9, 37, 38, 40, 41,\ldots.$$ 
 For every integer $m\ge 0$ such that $\rep_p(\alpha_p(m))=a_\ell\cdots a_1$, there exists a unique digit $a_0$ such that $\val_p(a_\ell\cdots a_1 a_0)$ belongs to $E_p$ by Lemma~\ref{lem: charact-Np}. This is the $m$th element in $E_p$; thus $E_p(\alpha_p(m))=N_p(m)$. 

  \subsection{Summatory function}

  Jean-Paul Allouche {\em et al.} \cite{All} provide an exact formula for the summatory function of the evil numbers (they also consider the generalization to arbitrary bases and digits)
  \begin{eqnarray*}
    S_e(M)&:=&\sum_{i=1}^M e(i)=M(M+1)+\left\lfloor \frac{M}{2}\right\rfloor- \frac{[M \% {2}]([M \% {2}]+1)}{2}\\
 && +[s_2(\lfloor M/2\rfloor)\%{2}] ([M \% {2}]+1)
  +2 \max\bigl\{0,[M \% {2}]-[s_2(\lfloor M/2\rfloor)\% {2}]\bigr\},
  \end{eqnarray*}
  where we let $s_2$ denote the sum-of-digits in base~$2$ and $[n\% k]$ denote the unique integer in $\{0,\ldots,k-1\}$ congruent to $n$ modulo $k$. In this formula, the first two terms explain the general behavior and last three terms only give a possible correction of $1$ to the main terms. 
  In the same vein, let us consider the summatory function of $N$ given by
  $$S_N(M):=\sum_{i=1}^M N(i).$$
Instead of considering general/advanced techniques on the summatory function of $k$-regular sequences \cite[Section~3.5]{AS}, we will make use of elementary operations and of the permutation $\alpha$ to express $S_N(M)$.
Now let $k\ge0$ be an integer. Because of \eqref{eq:alpha}, $e(\cdot)$ and $N(\cdot)$ take the same set of values over any interval of the form $[2^k,2^{k+1}[$, thus 
  we have $S_N(2^k-1)=S_e(2^k-1)$. However for $M\in [2^k,2^{k+1}-1[$, $S_N(M)>S_e(M)$. The graph of the difference between $S_N$ and $S_e$ is given in Figure~\ref{fig:diffSN}.
  \begin{figure}[h!tb]
    \centering
    \includegraphics[height=4.5cm]{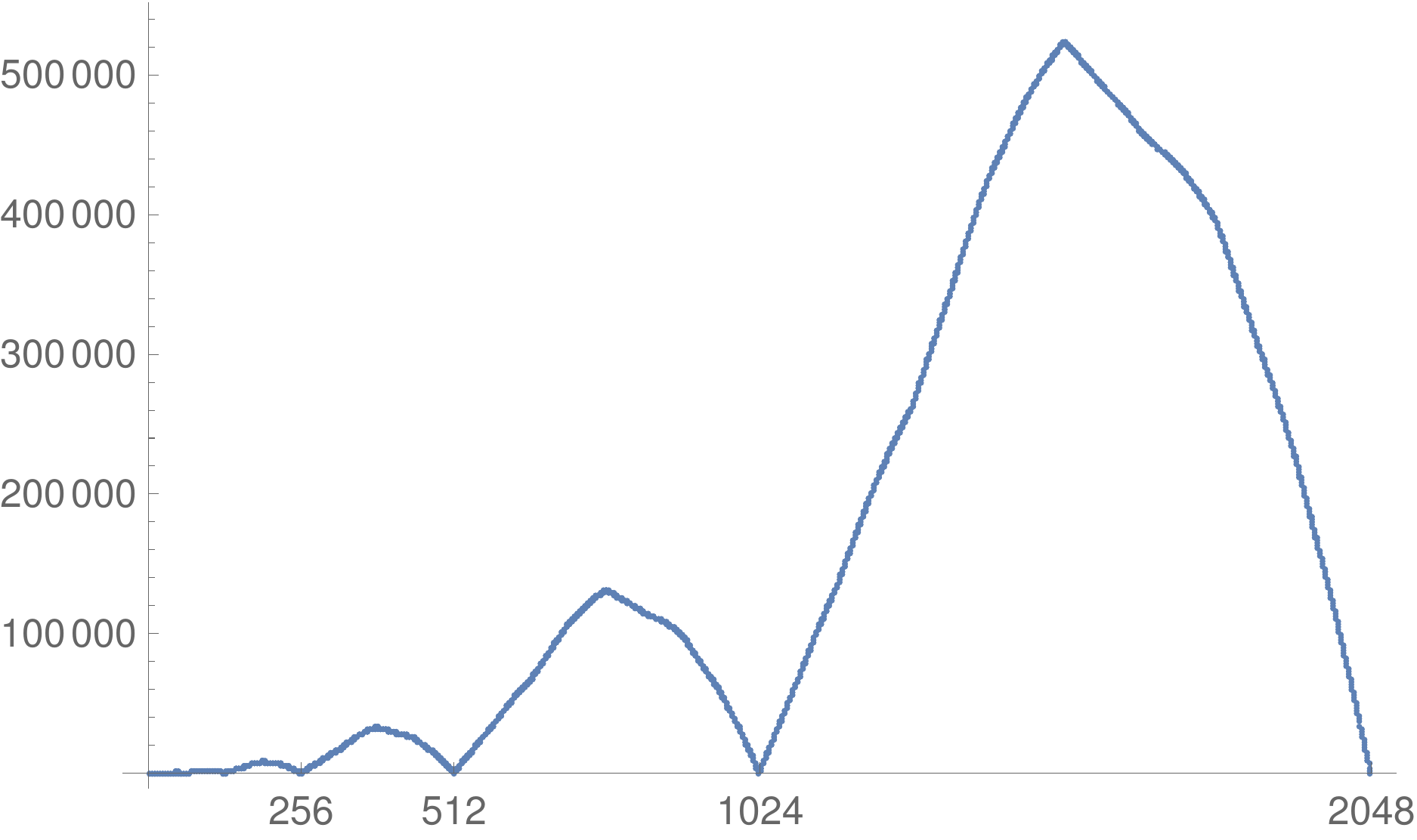}
    \caption{The difference $M\mapsto S_N(M)-S_e(M)$.}
    \label{fig:diffSN}
  \end{figure}
  Taking into account the behavior of the permutation $\alpha$ on the interval $[2^k,2^{k+1}[$, we obtain
  $$\sum_{j=2^k}^{2^k+2^{k-1}-1}N(j)=\sum_{j=2^k+2^{k-1}}^{2^{k+1}-1}e(j)\quad \text{ and }\quad
  \sum_{j=2^k+2^{k-1}}^{2^{k+1}-1}N(j)=\sum_{j=2^k}^{2^k+2^{k-1}-1}e(j).$$
  Since $e$ is an increasing sequence, the maximum of $S_N(M)-S_e(M)$ on $[2^k,2^{k+1}[$ is attained at $2^k+2^{k-1}-1$ and is given by 
  $$S_e(2^{k+1}-1) - 2 S_e(2^k+2^{k-1}-1) + S_e(2^k-1)$$
  because
  \begin{eqnarray*}
    S_N(2^k+2^{k-1}-1)&=&S_N(2^k-1) +\sum_{j=2^k}^{2^k+2^{k-1}-1}N(j)\\
  &=&S_e(2^k-1)+\sum_{j=2^k+2^{k-1}}^{2^{k+1}-1}e(j)\\
  &=&S_e(2^k-1)+S_e(2^{k+1}-1) -  S_e(2^k+2^{k-1}-1).\\    
  \end{eqnarray*}
For $M\in [2^k,2^{k+1}[$, we also get
\begin{equation}
  \label{eq:SNSe}
  S_N(M)-S_e(M)=\sum_{j=2^k}^M (N(j)-e(j)).
\end{equation}
  \begin{lemma}\label{lem:tec}
    For all $j\ge 1$, 
  $$N(j)-e(j)=e(\alpha(j))-e(j)\in 2(\alpha(j)-j) +\{-1,0,1\}$$
  and moreover, for two consecutive indices, 
  $$\sum_{r=0}^1 \left[N(2j+r)-e(2j+r)-2\left(\alpha(2j+r)-(2j+r)\right)\right]=0.$$  
\end{lemma}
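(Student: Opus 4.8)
The plan is to prove both identities by unwinding the relationship $\rep_2(N(j)) = \rep_2(\alpha(j))a_0$ where $a_0$ is the least significant digit of $j$, established in Subsection~\ref{sec:permut}, together with the analogous description of $e(\alpha(j))$ as $\rep_2(\alpha(j))$ with one extra digit appended. The key observation is that $N(j)$ and $e(\alpha(j))$ are \emph{the same integer} by \eqref{eq:alpha}, so $N(j)-e(j) = e(\alpha(j))-e(j)$ is immediate; the content is the bracketing in $2(\alpha(j)-j)+\{-1,0,1\}$ and the vanishing of the two-term sum.

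For the first claim, I would write $e(\alpha(j))$ and $e(j)$ explicitly in terms of $\alpha(j)$ and $j$: since $e(n)$ is obtained from $\rep_2(n)$ by appending a single parity-correcting bit, we have $e(n) = 2n + \beta(n)$ where $\beta(n)\in\{0,1\}$ is $0$ if $n$ is evil and $1$ if $n$ is odious. Hence $N(j)-e(j) = e(\alpha(j)) - e(j) = 2\alpha(j) + \beta(\alpha(j)) - 2j - \beta(j) = 2(\alpha(j)-j) + (\beta(\alpha(j)) - \beta(j))$, and since $\beta(\alpha(j))-\beta(j)\in\{-1,0,1\}$ the first assertion follows directly. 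So the ``moreover'' statement reduces to showing $\sum_{r=0}^1 \left(\beta(\alpha(2j+r)) - \beta(2j+r)\right) = 0$, i.e.\ that among the two indices $2j$ and $2j+1$, the number of $r$ for which $\alpha(2j+r)$ is odious equals the number of $r$ for which $2j+r$ is odious.

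To establish this last parity count, I would use the recursion \eqref{eq:alpha2} for $\alpha$. The two indices $2j$ and $2j+1$ are consecutive and differ only in the last bit, so exactly one of them is evil and the other odious: thus $\beta(2j)+\beta(2j+1) = 1$. For the image side, I would split on the parity of $j$: if $j=2n$ then $\{\alpha(4n),\alpha(4n+1)\} = \{2\alpha(2n), 2\alpha(2n)+1\}$, which are again two consecutive integers, so exactly one is odious and $\beta(\alpha(4n))+\beta(\alpha(4n+1))=1$; if $j=2n+1$ then $\{\alpha(4n+2),\alpha(4n+3)\} = \{2\alpha(2n+1)+1, 2\alpha(2n+1)\}$, once more a pair of consecutive integers with $\beta$-sum $1$. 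In all cases $\sum_{r=0}^1 \beta(\alpha(2j+r)) = 1 = \sum_{r=0}^1 \beta(2j+r)$, and subtracting gives the claim.

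The only mild subtlety, and the step I would watch most carefully, is the boundary/indexing: $\alpha$ is defined on $\mathbb{N}_{>0}$ via $\alpha(m) = m \oplus \lfloor m/2\rfloor$, and one must check that \eqref{eq:alpha2} holds for all the relevant small indices (in particular that $2j\ge 2$, which forces $j\ge 1$ as in the hypothesis, and that the ``append a bit'' description of $e(n)$ is valid starting from $n=1$ with $e(1)=3$). Once the indices are in range, every step above is a one-line verification, so the proof is short; the real insight is simply that consecutive integers always have complementary parity of binary digit-sum, applied on both the domain and the range of $\alpha$.
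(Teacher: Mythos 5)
Your proof is correct and essentially the paper's: both arguments reduce everything to the $\{0,1\}$-valued correction bits $\beta$ in $e(n)=2n+\beta(n)$ and show that these cancel over the pair $\{2j,2j+1\}$ on both the domain and the image side (the paper obtains the image-side count from $N(m)-2\alpha(m)=m\bmod 2$, you from the recursion~\eqref{eq:alpha2} --- the same fact in different clothing). One caution: your closing slogan that consecutive integers always have complementary parity of binary digit-sum is false in general ($7$ and $8$ are both odious); it holds only for pairs of the form $\{2k,2k+1\}$, which is fortunately the only case your argument actually uses.
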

\begin{proof}
The first part is obvious since we have $e(\alpha(j))\in 2\alpha(j)+\{0,1\}$ and $e(j)\in 2 j+\{0,1\}$ for any $j$. 

For the second part, let $u$ be the base-$2$ expansion of $j$. The four terms in 
  $$2(2j+2j+1)-e(2j)-e(2j+1)$$
  are respectively represented by $u00$, $u10$, $u0a$ and $u1(1-a)$ for some $a\in\{0,1\}$ such that $u0a$ and $u1(1-a)$ have an even number of ones. So, this sum is equal to $-1$. By definition of $N$ and $\alpha$, observe that the remaining terms can be grouped as 
  $$N(2j)-2\alpha(2j)=0\text{ and }N(2j+1)-2\alpha(2j+1)=1$$
  and the conclusion follows.
\end{proof}

  As a consequence of this lemma, for $M\in [2^k,2^{k+1}[$, we get
  $$S_N(M)=S_e(M)+2\sum_{j=2^k}^M \alpha(j) 
  -\left(M-2^k+1\right) \left(2^k+M\right) +R$$
  with $R\in\{-1,0,1\}$.
  If $M$ is odd, then the number of terms in the sum \eqref{eq:SNSe} is even so by the second part of Lemma~\ref{lem:tec}, we get $R=0$. If $M$ is even, then only the first part of the lemma can be applied and replacing $N(j)-e(j)$ with $2(\alpha(j)-j)$ could lead to an offset of $\pm1$.
  
Let $\ell\ge 0$ such that $M=2^k+\ell$. Observe that the above sum has $\ell+1$ terms. If we group together every two consecutive terms, we can make use of \eqref{eq:alpha2} to get 
\begin{align*}
\alpha(4n)+\alpha(4n+1)=4\alpha(2n)+1 
\quad \text{and} \quad 
\alpha(4n+2)+\alpha(4n+3)=4\alpha(2n+1)+1
\end{align*}
so 
\begin{equation}\label{eq: iterate alpha}
\sum_{j=2^k}^{M} \alpha(j) = 4 \sum_{j=2^{k-1}}^{2^{k-1}+\lfloor \frac{\ell-1}{2}\rfloor} \alpha(j) + \left\lfloor \frac{\ell-1}{2}\right\rfloor +1 + [(\ell+1) \% {2}]\, \alpha(M),
\end{equation}
where the last term only appears when $\ell$ is even since, in that case, the sum has an odd number of terms and the last term has thus to be treated separately.
By using~\eqref{eq: iterate alpha} repeatedly, one can write $S_N(M)$ as the sum of $S_e(M)-\left(M-2^k+1\right) \left(2^k+M\right) +R$ and $k$ terms of the form $\left\lfloor \frac{\ell'-1}{2}\right\rfloor +1 + [(\ell'+1) \% {2}]\, \alpha(2^{k'}+\ell')$ for decreasing values of $k',\ell'$, each term being multiplied by $2\cdot 4^{k-k'}$.

\begin{remark}
Since the main term in $S_e(M)$ is quadratic (recall the formula obtained in \cite{All}), in Figure~\ref{fig:parabola} we compare, on some interval $[2^k,2^{k+1}[$, $S_N(M)-S_e(M)$ and the parabola
$-2 M^2+6\cdot 2^k M- 4 \cdot 2^{2k}$ (which can be obtained from the intersections with the axis $y=0$ and knowing the maximum of the function).
\begin{figure}[h!tb]
  \centering
      \includegraphics[height=4.5cm]{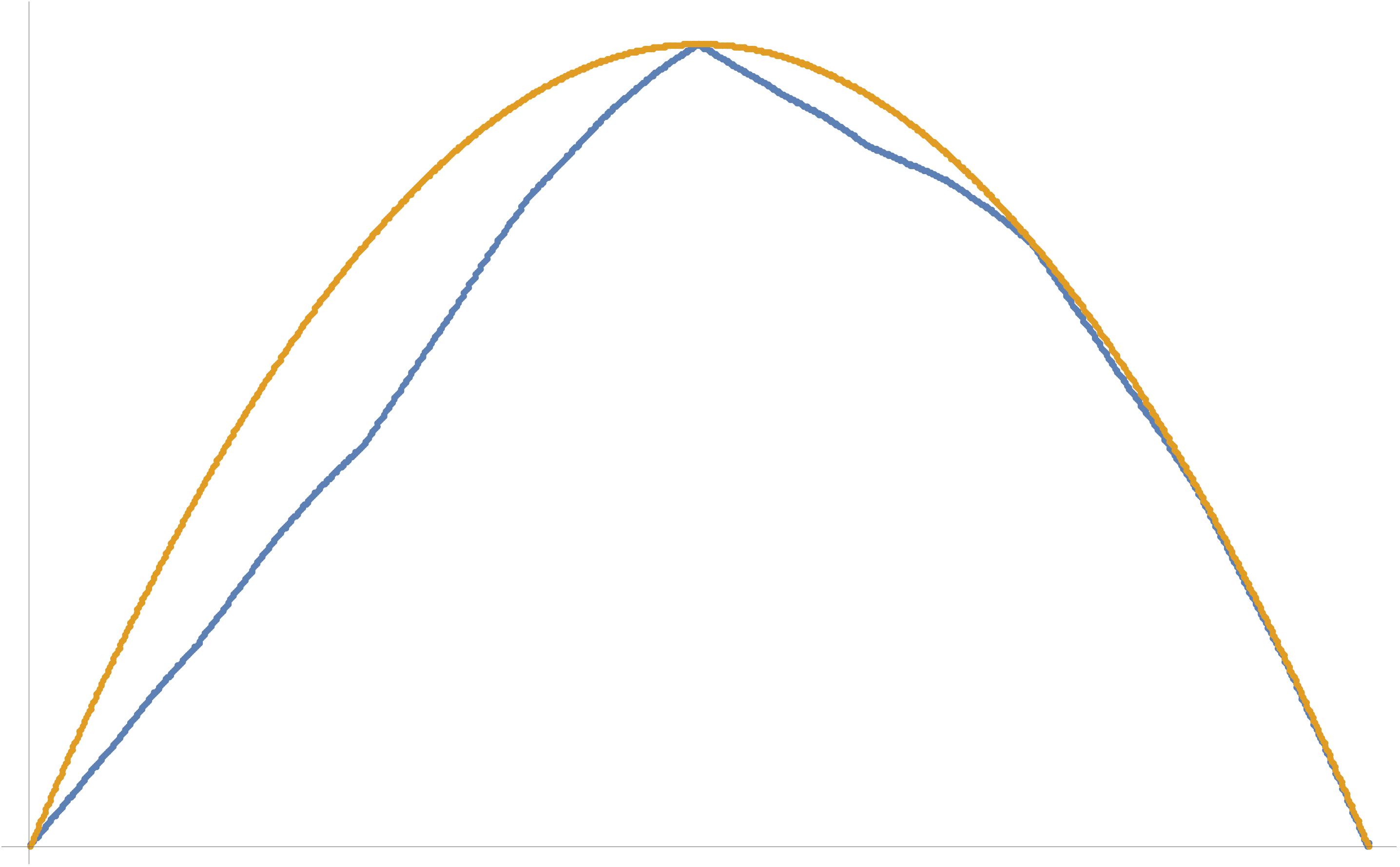}
  \caption{Comparison of $S_N(m)-S_e(M)$ with a parabola between two consecutive powers of $2$.}
  \label{fig:parabola}
\end{figure}
\end{remark}

   \section{Extension to trinomial coefficients}\label{sec:trinomial}
 One can also consider the generalization of Pascal's triangle to a three-dimensional pyramid made of trinomial coefficients (see, for instance, \cite{Wolfram}). Let $n\ge 0$. The plane of equation $x+y+z=n$ with $x,y,z\ge 0$ contains $(n+1)(n+2)/2$ integer points with value
 $$\binom{n}{x,y,z}=\frac{n!}{x!\, y!\, z!}.$$
 If these trinomial coefficients depicted by unit cubes are colored with respect to their value modulo~$p$, we get representations like the one in Figure~\ref{fig:pyr15}. In this section, we will generalize the observations from Section~\ref{sec:rec} and the recursive formula~\eqref{eq:z2n}.
 \begin{figure}[h!tb]
   \centering
   \includegraphics[height=6cm]{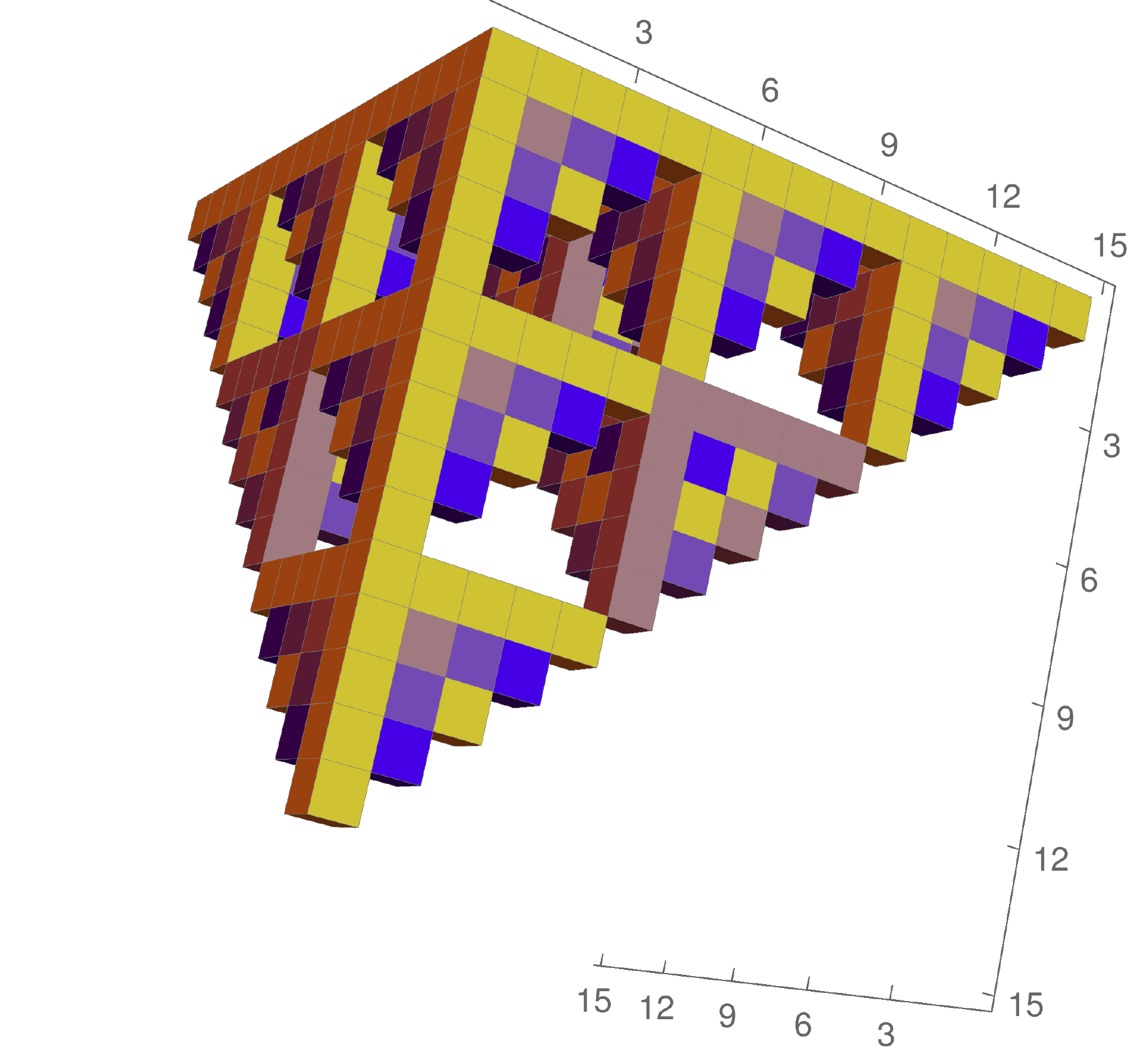}\quad
   \includegraphics[height=6cm]{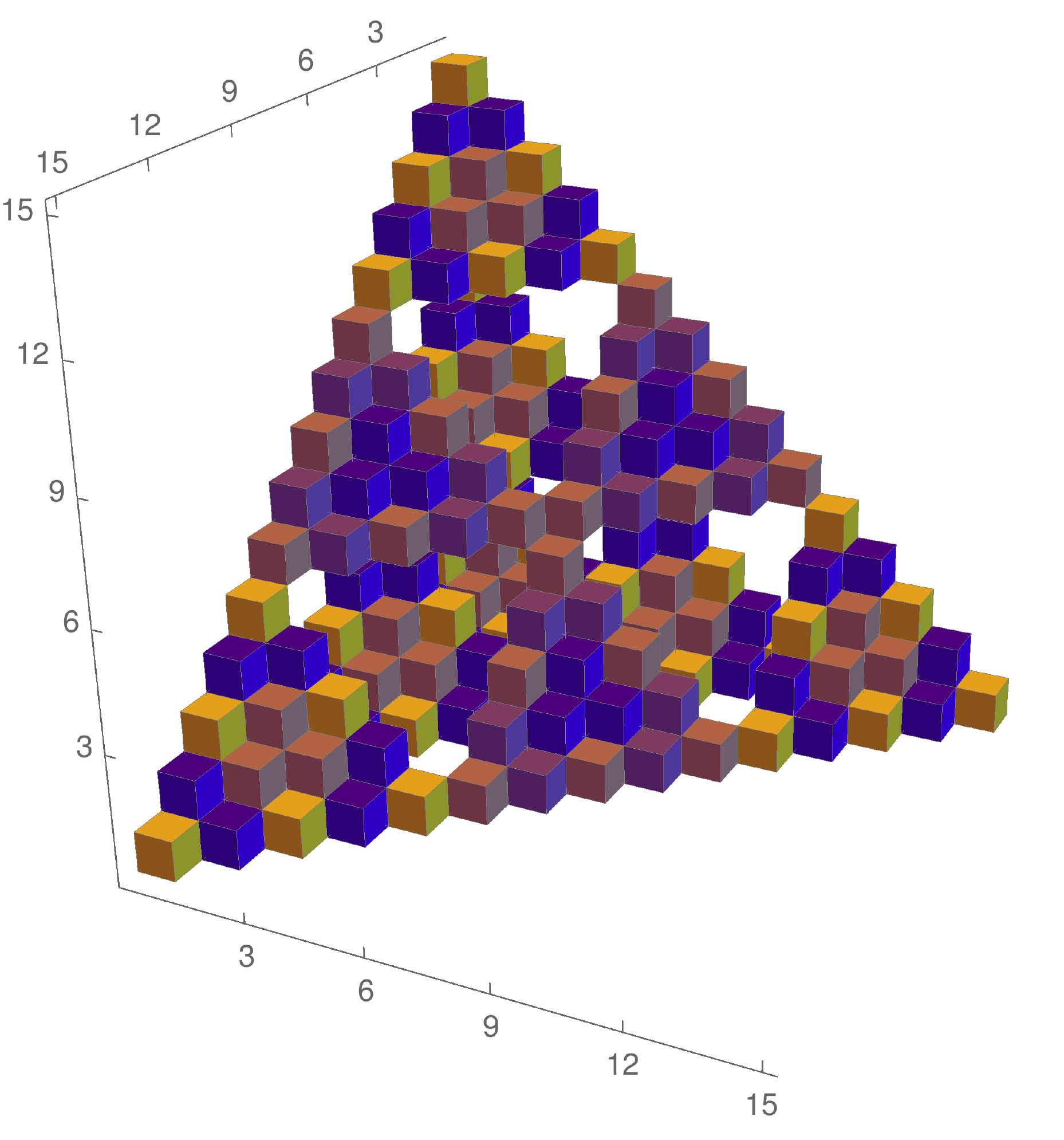}
   \caption{The first levels of Pascal's pyramid modulo $5$.}
   \label{fig:pyr15}
 \end{figure}
 
 For instance, for $n=5$, the sixth plane $x+y+z=5$ of the pyramid is a triangle that contains six rows ordered for $y=5,4,\ldots,0$. Since the coefficients are symmetric in the variables, one can also let vary either $x$ or $z$ (and take instead columns or diagonals of the form $x+y=z$). In the subsequent figures, we assume as usual that the $x$-axis is horizontal and the $y$-axis is vertical.

 \begin{definition}
Let $0\le k\le n$. We take these trinomial coefficients modulo $p$, so the \emph{$k$th line} (i.e., $z=n-k$) in the $n$th plane (i.e., $x+y+z=n$) of the pyramid is the base-$p$ expansion of an integer $\mathbf{t}_{p,n,k}$ defined by 
$$\mathbf{t}_{p,n,k}=\sum_{i=0}^k\left[\binom{n}{i,k-i,n-k}\bmod{p}\right]\, p^i.$$
\end{definition}

For $p=2$, if we order the elements plane by plane, and then for each plane, by row of increasing length, we get a sequence $\mathbf{t}_{2,0,0},\mathbf{t}_{2,1,0},\mathbf{t}_{2,1,1},\mathbf{t}_{2,2,0},\mathbf{t}_{2,2,1},\mathbf{t}_{2,2,2},\mathbf{t}_{2,3,0},\ldots$ whose first few terms are
$$1| 1, 3| 1, 0, 5| 1, 3, 5, 15| 1, 0, 0, 0, 17| 1, 3, 0, 0, 17, 51|\cdots.$$
See, for instance, Figure~\ref{fig:pyr6} for $n=5$. 
Note that $\mathbf{t}_{2,n,n}=\mathbf{t}_{2,n}$ for all $n\ge 0$, because the boundaries of the $n$th plane are copies of the $n$th row of Pascal's triangle.
 \begin{figure}[h!]
  $$
 \begin{array}{cccccc}
 &&&&&1\\
 &&&&5& 5\\
 &&&10& 20& 10\\
 &&10& 30& 30& 10\\
 &5& 20& 30& 20& 5\\
 1& 5& 10& 10& 5& 1\\
 \end{array}\quad 
  \begin{array}{|cccccc|l}
 &&&&&1  &  \mathbf{t}_{2,5,0}=1\\
 &&&&1& 1&  \mathbf{t}_{2,5,1}=3\\
 &&&0& 0& 0& \mathbf{t}_{2,5,2}=0\\
 &&0& 0& 0& 0& \mathbf{t}_{2,5,3}=0\\
 &1& 0& 0& 0& 1& \mathbf{t}_{2,5,4}=17\\
 1& 1& 0& 0& 1& 1& \mathbf{t}_{2,5,5}=51\\
 \end{array}
 $$
 \caption{The sixth row $(\mathbf{t}_{2,5,k})_{0\le k \le 5}$ of Pascal's pyramid.}
   \label{fig:pyr6}
 \end{figure}

Expand $(a+b+c)^n$ by the multinomial theorem and consider the coefficient of $a^ib^jc^k$ with $i+j+k=n$. It is equal to the corresponding coefficient in the product $(a+b+c)\cdot (a+b+c)^{n-1}$ where the latter factor is again expanded by the multinomial theorem. We get a generalization of Pascal's rule
$$\binom{n}{i,j,k}=\binom{n-1}{i-1,j,k}+\binom{n-1}{i,j-1,k}+\binom{n-1}{i,j,k-1}.$$

From this tree-term relation, we get the generalization of \eqref{eq:z2n} 
$$\forall i,j: 0\le j\le i,\quad \mathbf{t}_{2,i,j}=\mathbf{t}_{2,i-1,j}\oplus \mathbf{t}_{2,i-1,j-1}\oplus 2 \mathbf{t}_{2,i-1,j-1}$$
where we assume that $\mathbf{t}_{2,i,j}=0$ whenever $j>i$ or $j<0$.

\subsection{Lucas' theorem again and again} As in Section~\ref{sec:rec}, let us compare the values modulo~$p$ taken in the $i$th plane $x+y+z=i$ with $i=d\cdot p^k+s$, $0<d<p$ and $0\le s<p^k$, and those in the $s$th plane $x+y+z=s$. We will explain that the pattern modulo~$p$ of the $s$th plane repeats itself $(d+1)(d+2)/2$ times as square patches of size $p^k\times p^k$ under some well-understood permutation. 
In Figure~\ref{fig:plane323}, we consider $p=5$, $i=23$, $d=4$, $k=1$, and $s=3$.

\begin{figure}[h!tb]
  \centering
   \includegraphics[height=3.5cm]{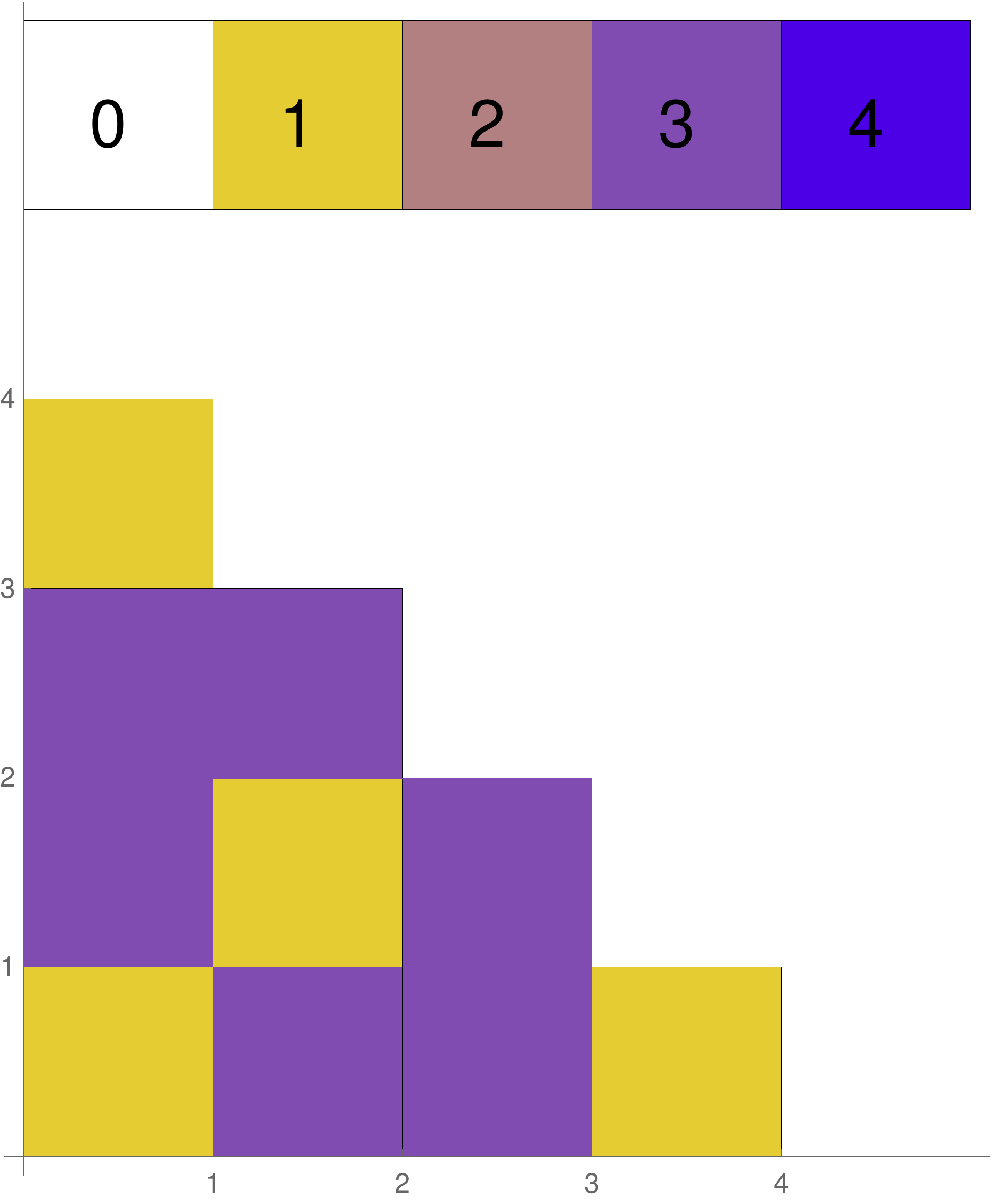}\quad\quad 
   \includegraphics[height=7cm]{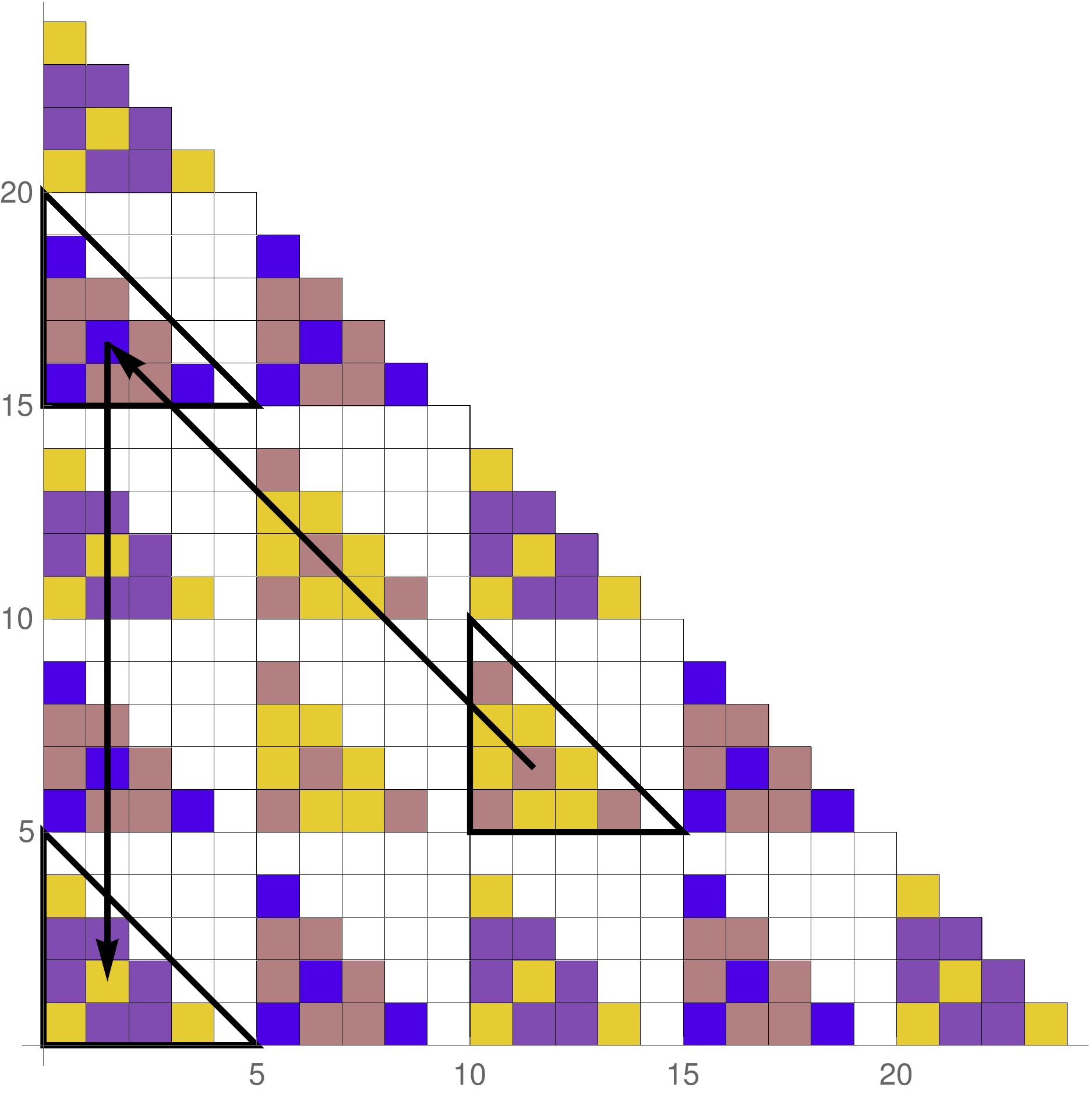}
  \caption{The $3$rd and $23$rd planes of Pascal's pyramid modulo~$5$. }
  \label{fig:plane323}
\end{figure}

Let $i=x+y+z$. We let $x_k\ge 0$ denote the quotient of $x$ by $p^k$. Let us compare the values modulo~$p$ of 
$$\binom{i}{x,\ y,\ z} \text{ with }\binom{i}{x-x_kp^k,\ y+x_kp^k,\ z}.$$
Geometrically, the map $(x,y,z)\mapsto (x-x_kp^k,\ y+x_kp^k,\ z)$ corresponds to a translation parallel to a side of the triangular boundary of the plane. For instance, in Figure~\ref{fig:plane323} where $p=5$, $i=23$ and $k=1$, if we take $x=11$, then $x_k=2$ and we have depicted the corresponding translation vector. Adding $(-10,10,0)$ to $(x,y,z)$ does not change the sum of the three components (we remain in the plane $x+y+z=23$) but translates the $5\times 5$ square region bounded by $10\le x<15$ and $5\le y<10$ to the square $0\le x'<5$ and $15\le y'<20$. Similarly, we could have considered a transformation of the form $(x,y,z)\mapsto (x-x_kp^k,\ y,\ z+x_kp^k)$. Due to the symmetry of the trinomial coefficients, six such transformations can be considered and correspond to translations in two directions parallel to one of the three sides of the boundary. We will indeed compose two such translations. On the one hand, recalling that $i=x+y+z$, we have
\begin{eqnarray*}
\binom{i}{x-x_kp^k,\ y+x_kp^k,\ z}&=&
\frac{i!}{z!\, (x+y)!} \cdot \frac{(x+y)!}{(x-x_kp^k)!\, (y+x_kp^k)!}\\
                                  &=&\binom{i}{z}\, \binom{i-z}{x-x_kp^k}\\
  &\equiv& \binom{i}{z}\ \underbrace{\binom{\epsilon_k(i-z)}{0}}_{=1} \ \prod_{j=0}^{k-1} \binom{\epsilon_j(i-z)}{\epsilon_j(x)} \bmod{p},\\
\end{eqnarray*}
where $\epsilon_j(n)$ is the $j$th least significant digit in the base-$p$ expansion of $n$ (and leading zeroes are allowed, for instance, $\epsilon_k(i-z)=0$ whenever $i-z<p^k$). On the other hand, we find
\begin{eqnarray*}
\binom{i}{x,\ y,\ z} &=&\binom{i}{z}\, \binom{i-z}{x}\\
  &\equiv& \binom{i}{z} \binom{\epsilon_k(i-z)}{x_k} \prod_{j=0}^{k-1} \binom{\epsilon_j(i-z)}{\epsilon_j(x)}\quad \pmod{p}.\\
\end{eqnarray*}
Hence with the same notation as in Section~\ref{sec:rec}, we have
$$\mu_{\epsilon_k(i-z),x_k}^{-1} \binom{i}{x,\ y,\ z}\equiv  \binom{i}{x-x_kp^k,\ y+x_kp^k,\ z}\pmod{p}.$$
We now apply a second map of the form $(x',y',z')\mapsto (x',y'-y'_k,z'+z'_k)$ and we get the permutation $\mu_{\epsilon_k(i-x'),y'_k}^{-1}$ acting of the values of the coefficients of the translated region. Combining these two transformations, we may relate the value modulo~$p$ of the initially considered trinomial coefficient with the trinomial coefficient of some
$$\binom{i}{x',\ y',\ z'} \text{ with } x',y'<p^k.$$
In Figure~\ref{fig:plane323}, adding $(0,-15,15)$ to $(x,y,z)$ does not change the sum of the three components but translate the $5\times 5$ square region bounded by $0\le x<5$ and $15\le y<20$ to the square $0\le x'<5$ and $0\le y'<5$. Consequently, the values modulo~$p$ are modified according to the composition of two permutations of the form $\mu_{a,b}$.

To conclude with the example given in Figure~\ref{fig:plane323}, start from the region $10\le x<15$, $5\le y<10$. First consider the sub-region with the extra constraint $15\le x+y\le 18$, we make such a splitting to consider the ``colored'' region and avoid ambiguity about $\epsilon_1(x+y)$. So we have $\epsilon_1(x+y)=3$ and $\epsilon_1(x)=x_1=2$. We thus consider a multiplication by the inverse (modulo~$5$) of the coefficient $\binom{3}{2}=3$ which is $2$ --- the reader may compare the two colored triangles connected by a diagonal arrow: they are off by a multiple of $2$. Now inside the region $0\le x<5$, $15\le y<20$, $15\le x+y\le 18$, we observe that $x\le 3$. So $20 \le 23-x=y+z\le 23$. We thus have $\epsilon_1(y+z)=4$ and $\epsilon_1(y)=y_1=3$. So we consider a multiplication by the inverse (modulo~$5$) of the coefficient $\binom{4}{3}=4$ which is $4$
--- the reader may compare the two colored triangles connected by a vertical arrow: again they are off by a multiple of $4$.

We have not discussed yet the white region corresponding to coefficients congruent to zero. For the region $10\le x<15$, $5\le y<10$, $19\le x+y<25$, so we first have a multiplication by the inverse of $\binom{\epsilon_1(x+y)}{2}$. But such a computation is irrelevant, because $\mu_{a,b}(0)=0$ for all $a,b$ ; meaning that white squares are mapped to white squares for all the considered translations. Another way to see this phenomenon is explained in the next subsection. 

\subsection{A $p$-automatic pyramid}

Let us quote Granville about Pascal's triangle modulo~$p$: ``\emph{Lucas' theorem may be viewed as a result about automata with $p$ possible states!}'' \cite{Granville}. Let us also mention \cite{AllBer} where a substitution mapping elements from $\{0,\ldots,p-1\}$ to $(p\times p)$-blocks allows the authors to  compute the rectangular block complexity of the associated bidimensional sequence. For $p=2$, the iterated substitution is 
\begin{equation}
  \label{eq:substi}
  1\mapsto\begin{array}{|c|c|}
            \hline
            1&0\\
            \hline
            1&1\\
            \hline
            \end{array}\, ,\ 0\mapsto\begin{array}{|c|c|}
            \hline
            0&0\\
            \hline
            0&0\\
            \hline
            \end{array}\, .
\end{equation}
With a reasoning similar to the one of the previous subsection, we show that such a construction still holds in higher dimension. Note that for dimension $2$ (thus from Pascal's triangle), it is known that $\left(\binom{m}{n}\bmod{d}\right)_{m,n\ge 0}$ is $k$-automatic for some integer $k\ge 0$ if and only if $d$ is a power of a prime $p$. In that case, the sequence is $p$-automatic \cite{AH}. Here we show that $\left(\binom{x+y+z}{x,\, y,\, z}\bmod{p}\right)_{x,y,z\ge 0}$ is $p$-automatic.

\begin{lemma}
If $0\le x\le y<p$ and $x+y\ge p$, then $x>\lfloor (x+y)/p\rfloor$.  
\end{lemma}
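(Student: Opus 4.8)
The plan is to use the hypotheses to pin down the floor completely, turning the inequality into a statement about the size of $x$. From $0\le x\le y<p$ we get $x+y\le 2(p-1)<2p$, and together with $x+y\ge p$ this forces $p\le x+y<2p$, so $\lfloor (x+y)/p\rfloor=1$ throughout the admissible range. Hence the claimed inequality $x>\lfloor (x+y)/p\rfloor$ is equivalent to the single assertion $x\ge 2$, and the lemma reduces to a short case analysis on the value of $x$.

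First I would dispose of the small values of $x$. The value $x=0$ is impossible, since it would give $x+y=y<p$, contradicting $x+y\ge p$; so $x\ge 1$, and it remains to rule out $x=1$. If $x=1$, then $x+y\ge p$ forces $y\ge p-1$, and since $y<p$ the only possibility is $y=p-1$, in which case $x+y=p$ exactly. Every other admissible pair already satisfies $x\ge 2$, which is the desired conclusion, so the entire argument funnels into this one surviving configuration.

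The main obstacle is precisely the pair $(x,y)=(1,p-1)$: there one has $\lfloor (x+y)/p\rfloor=1=x$, so the strict inequality $x>\lfloor(x+y)/p\rfloor$ is not met, and the case analysis above shows this is the \emph{unique} configuration where the statement as worded breaks down. The clean resolution is to work in the regime actually relevant for the block decomposition of the planes, namely $x+y>p$ (equivalently, a strictly positive borrowing digit $x_k=\lfloor x/p^k\rfloor$, as in the previous subsection). Under $x+y>p$ the case $x=1$ cannot occur, since it would force $x+y\le 1+(p-1)=p$; hence $x\ge 2$ and therefore $x>\lfloor(x+y)/p\rfloor=1$. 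Thus the two-line case analysis is conclusive exactly once the boundary pair $(1,p-1)$ — the sole point at which $x+y=p$ and $x=1$ coincide — is excluded, and I would flag that the hypothesis $x+y\ge p$ should be read as the strict $x+y>p$ for the conclusion to hold.
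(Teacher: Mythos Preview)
Your diagnosis that the printed statement is false is correct, and your counterexample $(x,y)=(1,p-1)$ is the right one. The gap is in the repair you propose. Tightening $x+y\ge p$ to $x+y>p$ does salvage the inequality with the floor, but that is not what the paper means or needs. If you look at the paper's own two-line proof, they write $x+y=p+r$ and then assert $r=\lfloor(x+y)/p\rfloor$; with $p\le x+y<2p$ this $r$ is the \emph{remainder} $(x+y)\bmod p=x+y-p$, not the quotient (which is $1$). Their computation $x=r+(p-y)$ with $p-y>0$ then correctly yields $x>r=(x+y)\bmod p$. So the lemma is a typo: $\lfloor(x+y)/p\rfloor$ should read $(x+y)\bmod p$ throughout, and under that reading the original hypothesis $x+y\ge p$ is the right one.

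This matters for the application. In the proof of the next proposition the ``last digit'' of $p(y+z)+b+c$ when $b+c\ge p$ is again written as $\lfloor(b+c)/p\rfloor$, but the least significant base-$p$ digit is of course $(b+c)\bmod p$; the vanishing of the Lucas factor $\binom{(b+c)\bmod p}{b}$ requires exactly $b>(b+c)\bmod p$, which is the corrected lemma. The boundary case $b+c=p$ (e.g.\ $a=0$, $b=2$, $c=p-2$) genuinely occurs there, and your strict-inequality version would not cover it, whereas the mod version gives $b>(b+c)-p=0$ immediately from $c<p$. So the right fix is to replace the floor by the remainder, not to sharpen the hypothesis.
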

\begin{proof}
Since $x+y<2p$ and $x+y\ge p$, write $x+y=p+r$ with $r=\lfloor (x+y)/p\rfloor<p$. Thus we get $x=r+p-y$ with $p-y>0$.  
\end{proof}

The next result permits us to obtain the values modulo~$p$ of trinomial coefficients within the cube $p (x,y,z)+\{0,\ldots,p-1\}^3$ from the value at $(x,y,z)$. The other way round, a value at a specific position is determining $p^3$ values at further positions in the space. In particular, the statement also explains why a pyramid is created. The cube is cut by a plane $x+y+z=p$ and is thus split into two regions. In the subset belonging to the half-space $x+y+z\ge p$, values modulo~$p$ of the coefficients are zero.

\begin{proposition}\label{pro:tri}
Let $a,b,c\in\{0,\ldots,p-1\}$.  If $a+b+c<p$, then 
$$  \binom{p(x+y+z)+a+b+c}{px+a,\  py+b,\  pz+c} \equiv
\binom{a+b+c}{a} \binom{b+c}{b} \binom{x+y+z}{x,\ y,\ z}\pmod{p}.$$
Otherwise
$$  \binom{p(x+y+z)+a+b+c}{px+a,\  py+b,\  pz+c} \equiv 0\pmod{p}.$$
\end{proposition}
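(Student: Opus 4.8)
The plan is to reduce the trinomial coefficient to a product of two ordinary binomial coefficients and then apply Lucas' theorem to each of them, in the spirit of the computations in Section~\ref{sec:rec}. Writing $n=p(x+y+z)+a+b+c$ and using the identity $\binom{n}{i,\ j,\ k}=\binom{n}{i}\binom{j+k}{j}$, valid whenever $n=i+j+k$ (already used in this section), I would first record
$$\binom{p(x+y+z)+a+b+c}{px+a,\ py+b,\ pz+c}=\binom{p(x+y+z)+a+b+c}{px+a}\,\binom{p(y+z)+b+c}{py+b}.$$
Both binomial coefficients now have the shape $\binom{pM+\rho}{pN+\sigma}$, so their residue modulo~$p$ is governed by the base-$p$ digits of $pM+\rho$ and $pN+\sigma$, which I will read off according to whether the ``remainders'' $\rho\in\{b+c,\,a+b+c\}$ reach $p$ or not. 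Note that since $a,b,c\le p-1$, we always have $b+c<2p$ and, in every case considered below, $a+b+c<2p$, so at most one carry out of the units position is ever involved.

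For the first claim, assume $a+b+c<p$; then also $b+c<p$, so neither $a+b+c$ nor $b+c$ produces a carry. Hence $\rep_p(p(x+y+z)+a+b+c)$ is $\rep_p(x+y+z)$ followed by the digit $a+b+c$, and $\rep_p(px+a)$ is $\rep_p(x)$ followed by $a$; Lucas' theorem then gives $\binom{p(x+y+z)+a+b+c}{px+a}\equiv\binom{x+y+z}{x}\binom{a+b+c}{a}\pmod p$, and likewise $\binom{p(y+z)+b+c}{py+b}\equiv\binom{y+z}{y}\binom{b+c}{b}\pmod p$. Multiplying the two congruences and recombining via $\binom{x+y+z}{x}\binom{y+z}{y}=\binom{x+y+z}{x,\ y,\ z}$ yields the stated formula.

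For the second claim, assume $a+b+c\ge p$; I would show that one of the two displayed factors already vanishes modulo~$p$. If $b+c\ge p$, then $p(y+z)+b+c=p(y+z+1)+(b+c-p)$, so the units digit of the top is $b+c-p$, which is $<b$ because $c<p$; by Lucas' theorem the factor $\binom{p(y+z)+b+c}{py+b}$ is $\equiv 0\pmod p$. If instead $b+c<p$ (but still $a+b+c\ge p$), then $p(x+y+z)+a+b+c=p(x+y+z+1)+(a+b+c-p)$, so the units digit of the top is $a+b+c-p$, which is $<a$ because $b+c<p$; by Lucas' theorem the factor $\binom{p(x+y+z)+a+b+c}{px+a}$ is $\equiv 0\pmod p$. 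Either way the product is $\equiv 0\pmod p$. (Alternatively, one may invoke Kummer's theorem for multinomials: when $a+b+c\ge p$ a carry occurs in the units place of the base-$p$ addition of $px+a$, $py+b$, $pz+c$, which forces $p$ to divide the coefficient.) The routine part here is the digit bookkeeping; the one point requiring a little care is precisely the observation that, because $a,b,c\le p-1$, a single carry out of the units position accounts for everything, so the truncated top number really has the claimed units digit. Once Proposition~\ref{pro:tri} is established it exhibits the $p\times p\times p$ block-substitution structure of Pascal's pyramid modulo~$p$ (generalizing~\eqref{eq:substi}), from which $p$-automaticity follows by the usual Cobham-type argument.
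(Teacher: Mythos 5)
Your proof is correct and follows essentially the same route as the paper's: the same factorization of the trinomial coefficient into $\binom{p(x+y+z)+a+b+c}{px+a}\binom{p(y+z)+b+c}{py+b}$, followed by Lucas' theorem and a case analysis on whether a carry occurs in the units position. The only (cosmetic) difference is that you verify the digit inequalities $b+c-p<b$ and $a+b+c-p<a$ directly, whereas the paper first orders $a\le b\le c$ and invokes a small auxiliary lemma; your version even dispenses with the harmless ``without loss of generality'' step.
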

\begin{proof}
Without loss of generality, assume $a\le b\le c$. We have
$$\binom{p(x+y+z)+a+b+c}{px+a,\  py+b,\  pz+c} = \binom{p(x+y+z)+a+b+c}{px+a}\binom{p(y+z)+b+c}{py+b}.$$
Let $u_k\cdots u_0$, $v_k\cdots v_0$, $x_k\cdots x_0$, $y_k\cdots y_0$ respectively be the base-$p$ expansions of $x+y+z$, $y+z$, $x$, $y$ such that $u_k\neq 0$. As usual, we allow leading zeroes to get expansions of the same length if necessary.
We examine two cases.

Suppose first that $a+b+c<p$. Then \[
\rep_p(p(x+y+z)+a+b+c)=u_k\cdots u_0 (a+b+c)\]
because $a+b+c$ is a single digit. In particular, $b+c<p$ and $\rep_p(p(y+z)+b+c)=v_k\cdots v_0 (b+c)$.
We may apply Lucas' theorem to both binomial coefficients to obtain 
\begin{eqnarray*}
  \binom{p(x+y+z)+a+b+c}{px+a,\  py+b,\  pz+c} &\equiv& \prod_{i=0}^k \binom{u_i}{x_i} \binom{a+b+c}{a} \prod_{j=0}^k \binom{v_j}{y_j} \binom{b+c}{b} \pmod{p}\\
                                               &\equiv& \binom{a+b+c}{a} \binom{b+c}{b} \binom{x+y+z}{x,\ y,\ z}\pmod{p},
                                               \end{eqnarray*}
as expected.

Assume now that $a+b+c\ge p$. As a first sub-case, assume $b+c\ge p$. By the above lemma, $b>\lfloor (b+c)/p\rfloor$. If we compute the base-$p$ expansion of $p(y+z)+b+c$, the last digit is $\lfloor (b+c)/p\rfloor$ followed by $\rep_p(p(y+z)+1)=v_k'\cdots v_0'$; there is a carry to deal with. Applying as above Lucas' theorem yields
                                             $$\binom{p(x+y+z)+a+b+c}{px+a,\  py+b,\  pz+c} \equiv \prod_{i=0}^k \binom{u_i}{x_i} \binom{a+b+c}{a} \prod_{j=0}^k \binom{v_j'}{y_j} \underbrace{\binom{\lfloor (b+c)/p\rfloor}{b}}_{=0} \pmod{p},$$
                                             as desired.
                                             As a final sub-case, assume that $a\le b+c< p$. By the above lemma, $a>\lfloor (a+(b+c))/p\rfloor$. Now the reasoning is similar. If we compute the base-$p$ expansion of $p(x+y+z)+a+b+c$, the last digit is $\lfloor (a+b+c)/p\rfloor$ followed by $\rep_p(p(x+y+z)+1)=u_k'\cdots u_0'$. Applying as above Lucas' theorem yields
                                             $$\binom{p(x+y+z)+a+b+c}{px+a,\  py+b,\  pz+c} \equiv \prod_{i=0}^k \binom{u_i}{x_i} \underbrace{\binom{\lfloor (a+b+c)/p\rfloor}{a}}_{=0} \prod_{j=0}^k \binom{v_j'}{y_j} \binom{b+c}{b} \pmod{p},$$
                                             as wanted.
\end{proof}

This proposition permits us to define a 3D-substitution over $\{0,\ldots,p-1\}^3$ similar to \eqref{eq:substi} or, equivalently, an automaton reading triplets of digits. The initial symbol is $1$. The image of a symbol $q\in\{0,\ldots,p-1\}$ is a cube of size $p$ indexed by $\{0,\ldots,p-1\}^3$ such that, for all  $a,b,c\in\{0,\ldots,p-1\}$, 
if $a+b+c\ge p$, then $[\sigma(q)]_{a,b,c}=0$ and  if $a+b+c<p$, then  
$$\sigma(q)_{a,b,c}=q\binom{a+b+c}{a} \binom{b+c}{b} \mod{p}.$$
See Figure~\ref{fig:sub} for an example of images of $\sigma$ in the case $p=5$.
Observe that iterations of $\sigma$ on $1$ are converging. Indeed, if we iterate $\sigma$ on $1$, then $\sigma^n(1)$ is a cube of size $p^n$ and $\sigma^n(1)$ appears inside $\sigma^{n+1}(1)$ at the origin $(0,0,0)$.
  \begin{figure}[h!tb]
  \centering
   \includegraphics[height=4.5cm]{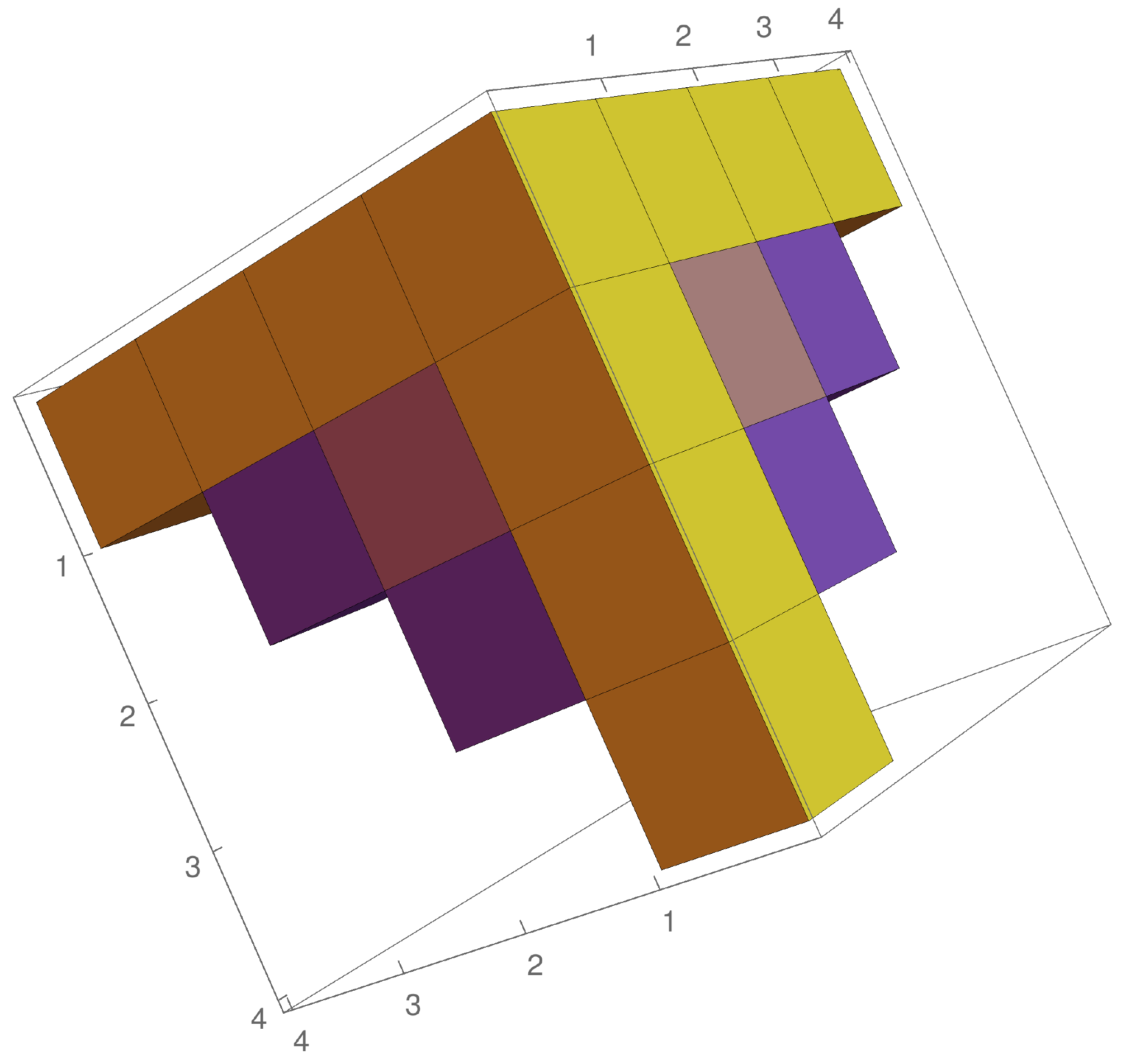}\quad
   \includegraphics[height=4.5cm]{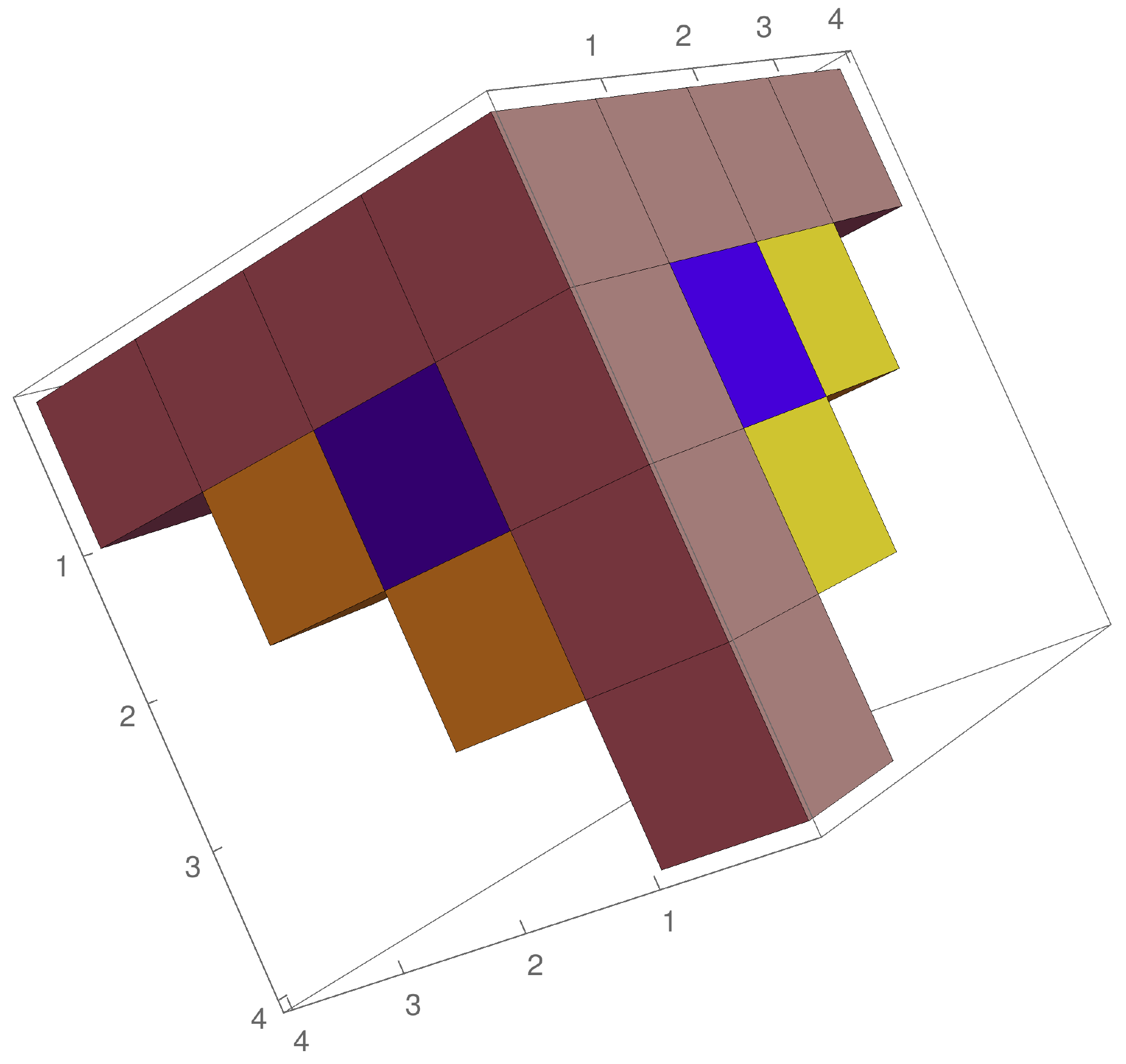}\\
      \includegraphics[height=4.5cm]{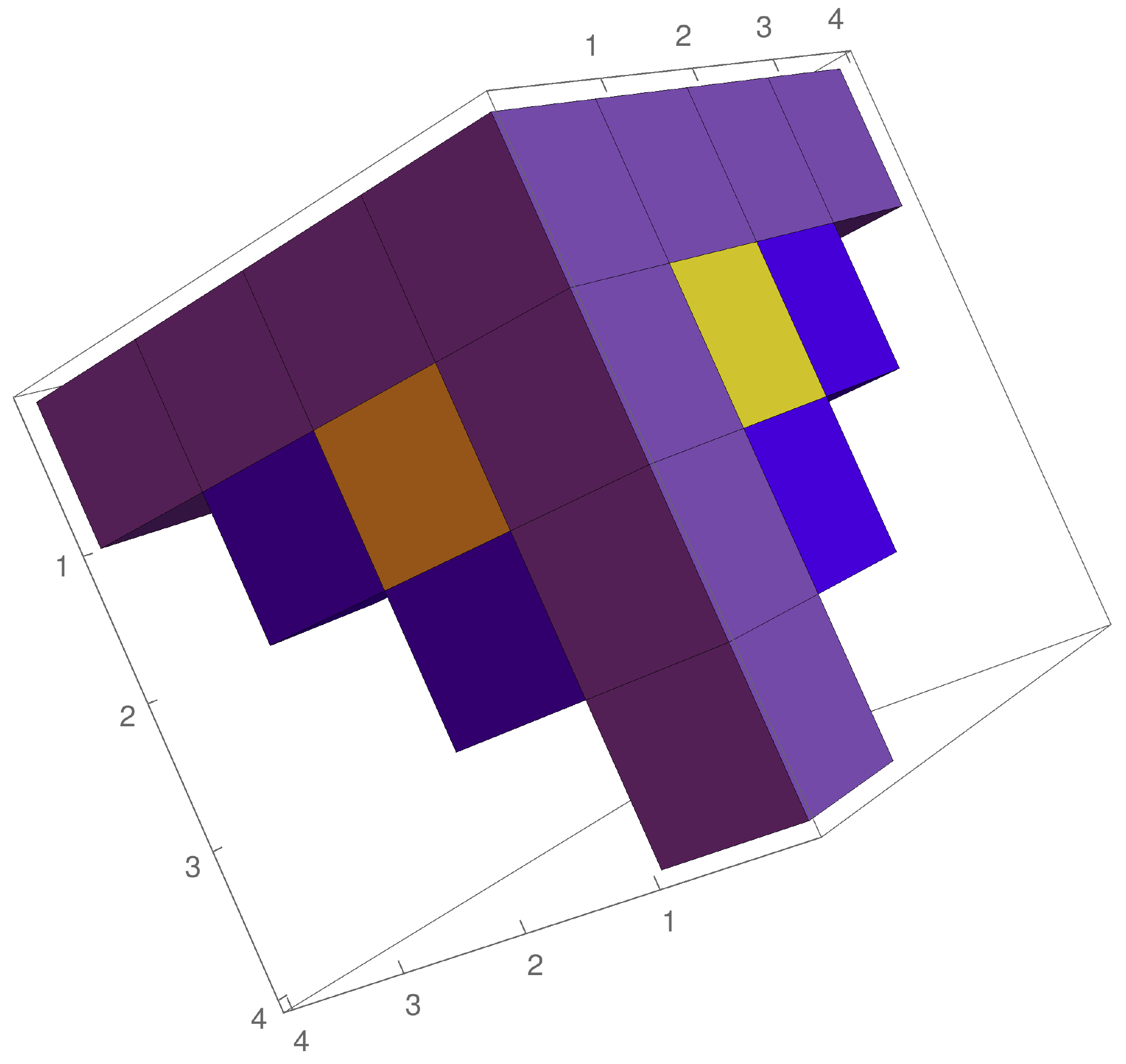}\quad
   \includegraphics[height=4.5cm]{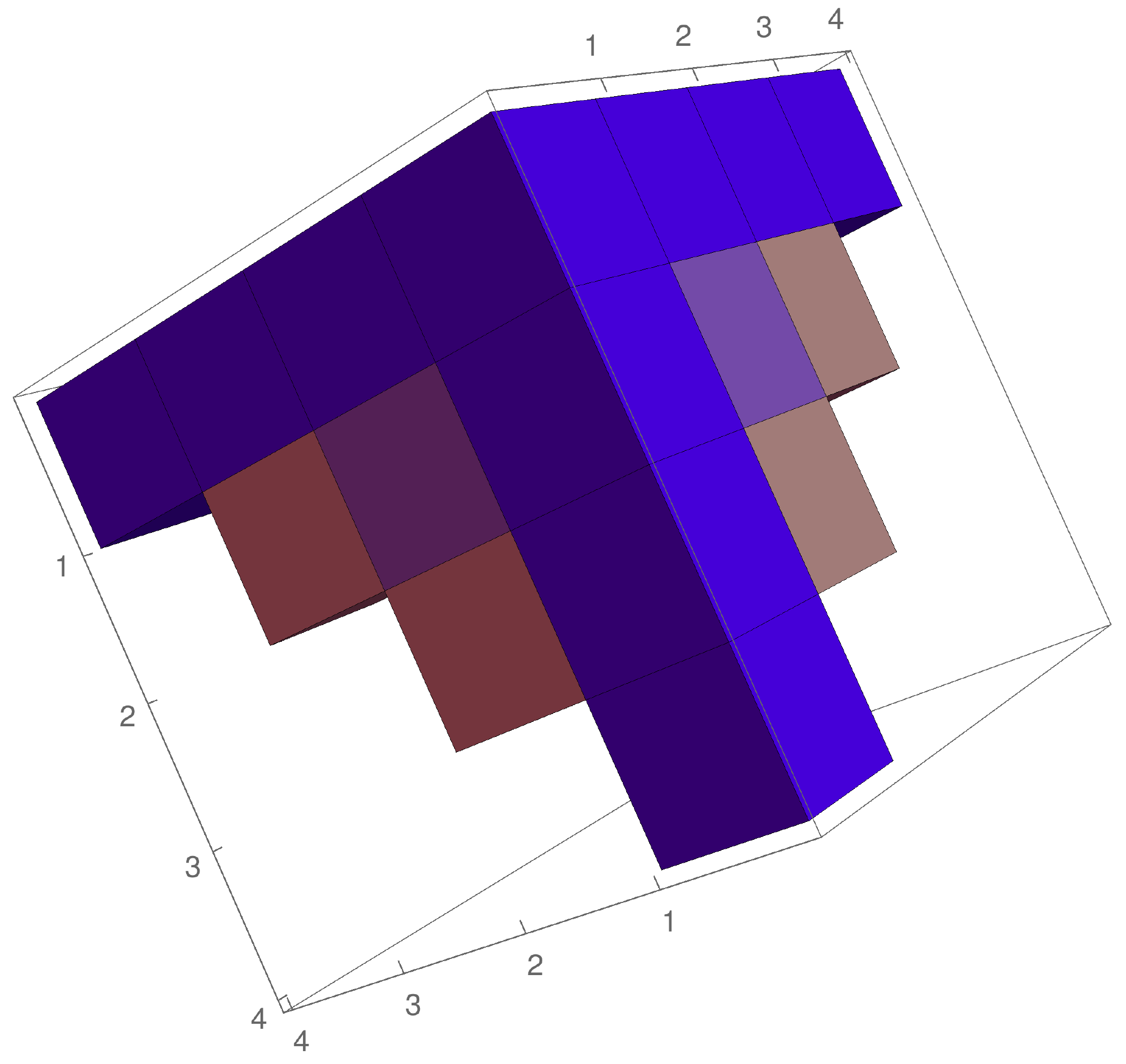}
  \caption{The images $\sigma(q)$ for $p=5$ and $q\in\{1,2,3,4\}$ (in this order).}
  \label{fig:sub}
\end{figure}
\section{Concluding remarks}\label{sec:conclusion}

In Section~\ref{sec:nim}, we focused on base $2$. For a general integer base $p>2$, with $\oplus_p$ being the addition digit-wise modulo~$p$ (without carry), it is obvious that $\mathbf{t}_{p,n+1}=\mathbf{t}_{p,n}\oplus_p (p\, \mathbf{t}_{p,n})$. We can introduce a sequence $N_p(m)$ defined by $(m\oplus_p p.m)_{m\ge 0}$. Nevertheless, except for $p=3$ with {\tt A242399}, no such sequences appear in the OEIS and contrarily to the binary case, we do not find any nice property to report.

In \cite{LRS}, we have considered generalizations of Pascal's triangle to binomial coefficients of words. When these coefficients are reduced modulo $p$, we could also define an analogue of the sequence $(\mathbf{t}_{p,n})_{n\ge 0}$. A natural candidate to consider is the Fibonacci numeration system, i.e., the words of the numeration language belong to $1\{0,01\}^*\cup\{\varepsilon\}$. The rows of this Pascal's triangle modulo~$2$ evaluated as base-$2$ expansions give the sequence whose first terms are
$$1, 3, 5, 5, 29, 9, 57, 129, 249, 177, 705, 3681,\ldots$$
and evaluating these rows as Fibonacci representations (not necessarily greedy) gives
$$1, 3, 4, 4, 17, 6, 27, 35, 82, 56, 145, 501, 624, 22, 1056,\ldots.$$

In the last section, we considered trinomial coefficients but the reasoning can be extended to multinomial coefficients. In particular, Proposition~\ref{pro:tri} can be extended showing that the multidimensional sequence $\left(\binom{x_1+\cdots +x_n}{x_1,\, \ldots,\, x_n}\bmod{p}\right)_{x_1,\ldots,x_n\ge 0}$ is $p$-automatic.

With Proposition~\ref{pro:tri}, one could also think about a possible connection with the so-called {\em combinatorial numeration system} where every integer can be decomposed as a sum of binomial coefficients of a prescribed form \cite{CRS,Katona}.

\section*{Acknowledgment}
Manon Stipulanti is supported by the FNRS Research grant 1.B.397.20.

\end{document}